\theoremstyle{plain}
\newtheorem{theorem}{Theorem}
\newtheorem{proposition}[theorem]{Proposition}
\newtheorem{notation}[theorem]{Notation}
\newtheorem{lemma}[theorem]{Lemma}
\theoremstyle{definition}
\newtheorem{remark}[theorem]{Remark}
\newtheorem{definition}[theorem]{Definition}
\newtheorem{corollary}[theorem]{Corollary}
\newcommand{\N}{\mathbb{N}}
\newcommand{\R}{\mathbb{R}}
\newcommand*{\mforall}{\tilde\forall}
\newcommand*{\mexists}{\tilde\exists}
\newcommand{\norm}[1]{\left\lVert#1\right\rVert}
\newcommand{\BFI}{\mathsf{BFI}}
\newcommand{\mPPA}{\mathsf{mPPA}}
\begin{document}

\title{Metastability of the proximal point algorithm with multi-parameters
\thanks{2010 Mathematics Subject Classification: 90C25, 47H09, 46N10, 03F10, 03F60. Keywords: Convex optimization, proximal point algorithm, proof mining, metastability.}}
\author{Bruno Dinis\thanks{Departamento de Matem\'atica, Faculdade de Ci\^encias da
Universidade de Lisboa, Campo Grande, Edif\'icio~C6, 1749-016~Lisboa, Portugal. \protect\url{bmdinis@fc.ul.pt}.}
\and
Pedro Pinto\thanks{Department of Mathematics, Technische Universit{\"a}t Darmstadt, Schlossgartenstrasse 7, 64289 Darmstadt, Germany. \newline \protect\url{pinto@mathematik.tu-darmstadt.de}.}}

\maketitle

\begin{abstract}
In this article we use techniques of proof mining to analyse a result, due to Yonghong Yao and Muhammad Aslam Noor, concerning the strong convergence of a generalized proximal point algorithm which involves multiple parameters. Yao and Noor's result ensures the strong convergence of the algorithm to the nearest projection point onto the set of zeros of the operator. Our quantitative analysis, guided by Fernando Ferreira and Paulo Oliva's bounded functional interpretation, provides a primitive recursive bound on the metastability for the convergence of the algorithm, in the sense of Terence Tao. Furthermore, we obtain quantitative information on the asymptotic regularity of the iteration. The results of this paper are made possible by an arithmetization of the $\limsup$.
\end{abstract}

\section{Introduction}

In the theory of maximal monotone operators, many problems, such as problems of minimization of a function, variational inequalities, etc., can be formulated as finding a zero of a maximal monotone operator (see e.g.\ \cite{AK(98)} and the references therein). The proximal point algorithm $\eqref{RockPPA}$ \cite{Rockafellar76} is a powerful and successful algorithm in finding a solution of maximal monotone operators. Starting from any initial guess $x_0 \in H$, the $\eqref{RockPPA}$ generates a sequence $(x_n)$ which approximates the solution, defined by
\begin{equation}\tag{$\mathsf{PPA}$}\label{RockPPA}
x_{n+1}=\mathsf{J}_{c_n}(x_n)+e_n,
\end{equation}
where $\mathsf{J}_{c_n}$ are resolvent functions of a maximal monotone operator with parameter $c_n>0 $, and $(e_n)$ an error sequence. Ralph Rockafellar showed that  \eqref{RockPPA} converges weakly towards a zero of the operator, provided that $(c_n)$ is bounded away from zero and $\norm{e_n}$ is summable, i.e.\ $\sum_{n=0}^{\infty}\norm{e_n}<\infty$. Osman G\"{u}ler showed in \cite{G(91)}, by providing a counter-example, that in general \eqref{RockPPA} does not converge strongly. For this reason, several modifications of the algorithm were studied (see for example \cite{EB(92),HH(01),MX(04)}).

The Krasnosel'ski\u{\i}-Mann ($\mathsf{KM}$) iteration is an important algorithm for the approximation of fixed points of nonexpansive maps \cite{Mann}. One relevant feature that makes the $\mathsf{KM}$ iteration attractive is the fact that it is Fejér monotone relative to the set of fixed points \cite{Combettes(09), KLN(18)}. In general, one can only guarantee weak convergence for the $\mathsf{KM}$ iteration. Of course, if in practical optimization problems we restrict ourselves to a finite-dimensional context, then this limitation disappears. On the other hand, with the so-called Halpern variant iteration \cite{Halpern67} one can actually establish a general strong convergence result. The fact that the Halpern iteration seems to be less sought after for optimization practice may rest on the fact that this iteration is no longer Fejér monotone. However, recent results brought a renewed interest to the Halpern type iteration (see e.g. \cite{Lieder(17),Diak(20)}). Through speedup techniques (as in \cite{Nesterov(07)}), improvements on the rate of asymptotic regularity were obtained for the Halpern iteration. 

The impact of these iterations in fixed point theory motivated the following variants of \eqref{RockPPA}:

\begin{equation}\tag{$\mathsf{KM\text{-}PPA}$}\label{KM-PPA}
x_{n+1}=\lambda_n x_n+(1-\lambda_n)\mathsf{J}_{c_n}(x_n)+e_n,
\end{equation}

\begin{equation}\tag{$\mathsf{H\text{-}PPA}$}\label{H-PPA}
x_{n+1}=\lambda_nu+(1-\lambda_n)\mathsf{J}_{c_n}(x_n)+e_n,
\end{equation}
with $(\lambda_n) \subset [0,1]$, and for all $n \in \N$, $c_n>0 $ and $e_n$ is an error term. While, in general, \eqref{KM-PPA} is only weakly convergent \cite{KT(00)}, the algorithm \eqref{H-PPA} is strongly convergent  \cite{KT(00),X(02)}. Other strongly convergent variants of the proximal point algorithm are, for example, the hybrid projection $\mathsf{PPA}$ \cite{SS(00)}, the shrinking projection $\mathsf{PPA}$ \cite{CAY(10)} and the viscosity $\mathsf{PPA}$ \cite{Taka(07)}. 

In this paper, we will focus on the following multi-parameter version of the proximal point algorithm, considered by  Yonghong Yao and Muhammad Aslam Noor in \cite{YaoNoor2008}, which is an hybrid between \eqref{KM-PPA} and \eqref{H-PPA}. Let $z_0 \in H$ be an initial guess and define

\begin{equation}\tag{$\mPPA$}\label{PPA}
z_{n+1}=\lambda_nu+\gamma_nz_n+\delta_n\mathsf{J}_{c_n}(z_n)+e_n,
\end{equation}
where $u \in H$ is given, and for all $n \in \N$ it holds that $c_n>0 $, $\lambda_n,\gamma_n,\delta_n \in (0,1)$ and $\lambda_n+\gamma_n+\delta_n=1$.

Yao and Noor showed in \cite{YaoNoor2008} that the algorithm \eqref{PPA} is strongly convergent to the nearest projection point onto the set of zeros of the operator (see Theorem~\ref{ThmYaoNoor}). The goal of this paper is to study quantitative information regarding the strong convergence of the iteration \eqref{PPA}.

Using proof-theoretical techniques, we analyse Yao and Noor's proof, and are able to prove the strong convergence of \eqref{PPA} using only a weaker version of the metric projection principle, an arithmetization of the $\limsup$, and bypassing the use of a sequential weak compactness argument. Theorem~\ref{theoremyaonoor2} provides an effective bound on the metastability -- in the sense of Terence Tao \cite{T(08a),T(08b)} -- of \eqref{PPA}, i.e.\ we obtain a computable function $\phi: \N \times \N^{\N}\to \N $ such that \begin{equation}\label{star}\tag{$\star$}
\forall k \in \N \,\forall f : \N \to \N \,\exists n \leq \phi(k,f) \, \forall i,j \in [n,n+f(n)] \left(\norm{z_i-z_j}\leq \frac{1}{k+1} \right).
\end{equation}
Note that  \eqref{star} is equivalent (in a non-effective way) to the Cauchy property of the sequence $(z_n)$. For a general sequence $(z_n)$ this is the best one can hope for, in the sense that it is not possible to obtain a computable rate for the Cauchy property. Furthermore, we obtain quantitative information on the asymptotic regularity of the iteration (Proposition~\ref{lemmaintermediate} and Corollary~\ref{Corasympreg}). The complexity of the extracted information follows directly from the strength of the logical principles required for the proof. The fact that these results are proved using the weaker principles mentioned above is reflected in the bounds obtained, which are primitive recursive (in the sense of Kleene). The way to deal with the projection and sequential weak compactness arguments was already understood \cite{kohlenbach2011quantitative,FFLLPP(19)}, so the remaining obstacle was on how to deal with the $\limsup$. The problem is solved by an arithmetization of the $\limsup$ (Lemma~\ref{lemmarationalapprox2}) tailored to deal with a key argument (Lemma~\ref{SuzukiLemma2}) in Yao and Noor's proof (see also \cite{KS(ta)} for an alternative way to deal with the $\limsup$).

The methods used in this paper are set in the framework of proof mining, a program that describes the process of using proof-theoretical techniques to analyse mathematical proofs with the aim of extracting new information. This program was developed mainly by the works of Ulrich Kohlenbach and his collaborators, producing a vast number of results; analysing proofs from areas such as approximation theory, ergodic theory, fixed point theory and optimization theory (see e.g.\ \cite{K(17),KO(03),K(08)}). In the proof mining program, proof interpretations are used as tools to extract constructive (i.e.\ computational) information from noneffective proofs. The output of the interpretation, which we refer to as \emph{quantitative version}, gives explicit information that previously was implicit and hidden behind the use of quantifiers. The standard technique in proof mining is Kohlenbach's \emph{monotone functional interpretation} \cite{K(96),K(08)} which is based on Kurt G\"{o}del's \emph{Dialectica} interpretation \cite{G(58),AF(98)} that works with upper bounds for witnessing terms instead of precise witnesses. Recently, Fernando Ferreira and Paulo Oliva's \emph{bounded functional interpretation} ($\BFI$) \cite{FO(05)} (see also the classical version \cite{F(09)}) has proven to be a valid alternative for the proof mining program, providing new insight to some theoretical questions concerning the elimination of sequential weak compactness \cite{FFLLPP(19)}.

  The quantitative results, as well as their proofs, obtained by the proof mining program do not presuppose any particular knowledge of logical tools because the latter are only used as an intermediate step and are not visible in the final product. Apart from the end of Section~\ref{Sectionlogic}, where we make some remarks on the logical aspects of the analysis, our work is no exception and so, knowledge of tools from logic in general and familiarity with the $\BFI$ in particular are not necessary to read this paper. 

We would like to point out that our work comes as a natural generalization of \cite{LNS(18),LS(18),LLPP(ta)}.

The structure of the paper is the following.
In Section~\ref{SectionStrong} we recall some notions concerning Hilbert spaces and monotone operators. We also state the result by Yao and Noor for which we give a quantitative analysis in the subsequent sections and explain its original proof.
In Section~\ref{SectionRestricting} we give a quantitative treatment of the principles used in the original proof: the projection argument, sequential weak compactness, and the arithmetization of the $\limsup$.
The quantitative analysis of Yao and Noor's proof is carried out in Section~\ref{SectionQuantitative}. We start by obtaining a partial result which depends on an additional condition (Subsection~\ref{SectionMetastability}). This condition is then shown to be satisfied by a concrete functional (Subsection~\ref{SectionMainLemmas}). Finally, we prove the main result regarding the metastability for \eqref{PPA} (Subsection~\ref{Sectionrevisited}). 
We leave some final remarks that allow to better understand some theoretical aspects of the analysis to Section~\ref{Sectionlogic}.

\section{Preliminaries}\label{SectionStrong}

We work in a real Hilbert space $H$ with inner product $\langle \cdot, \cdot \rangle$ and norm $\norm{\cdot}$. We recall that a multi-valued operator $T:H \to 2^{H}$  is said to be \emph{monotone} if and only if whenever $(x,y)$ and $(x',y')$ are elements of the graph of $T$, it holds that $\langle x-x',y-y'\rangle \geq 0$. A monotone operator $T$ is \emph{maximal monotone} if  the graph of $T$ is not properly contained in the graph of any other monotone operator on $H$. We denote by $S$ the set of all \emph{zeros} of $T$, i.e.\ $S = T^{-1}(0)$. 
We fix $T$ a maximal monotone operator and assume henceforth $S$ to be nonempty.
For $c>0$, we use $\mathsf{J}_c$  to denote the \emph{resolvent function} of $T$ with parameter $c$, i.e.\ the single-valued function defined by
\begin{equation*}
\mathsf{J}_c = (I + cT )^{-1}.
\end{equation*}
A mapping $f : H \to H$ is called \emph{nonexpansive} if
$$\forall x, y \in H (\norm{f (x) -f (y)}\leq \norm{x -y}).$$ 
The set of fixed points of a mapping $f$ is the set  $\mathrm{Fix}(f):=\{x \in H: f(x)=x\}$.  The resolvent mapping $\mathsf{J}_c$ is nonexpansive, and for every $c>0$, the set of fixed points of $\mathsf{J}_c$ is $S$.
If $f$ is nonexpansive, then $\mathrm{Fix}(f)$ is a closed and convex subset of $H$. For a comprehensive introduction to convex analysis and the theory of monotone operators in Hilbert spaces we refer to \cite{BC(17)}.

The following lemmas are well-known.

\begin{lemma}[Resolvent identity]
For every $a,b>0$ and every $x \in H$ it holds that 
\begin{equation*}
\mathsf{J}_a (x) = \mathsf{J}_b\left(\frac{b}{a}x+ \left(1-\frac{b}{a} \right)\mathsf{J}_a (x)\right).
\end{equation*}
\end{lemma}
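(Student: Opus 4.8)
The plan is to prove the resolvent identity directly from the definition $\mathsf{J}_c = (I + cT)^{-1}$, exploiting the fact that for a maximal monotone operator the resolvent is single-valued and everywhere defined. First I would unwind what $y = \mathsf{J}_a(x)$ means: it says precisely that $x \in y + aT(y)$, i.e.\ $\frac{1}{a}(x - y) \in T(y)$. The goal is to show that this same $y$ equals $\mathsf{J}_b\!\left(\tfrac{b}{a}x + \left(1 - \tfrac{b}{a}\right)y\right)$, which by single-valuedness of $\mathsf{J}_b$ amounts to checking that $w := \tfrac{b}{a}x + \left(1 - \tfrac{b}{a}\right)y$ lies in $y + bT(y)$, i.e.\ that $\tfrac{1}{b}(w - y) \in T(y)$.

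The key computation is then just algebra: with $y = \mathsf{J}_a(x)$ we have
\begin{equation*}
w - y = \frac{b}{a}x + \left(1 - \frac{b}{a}\right)y - y = \frac{b}{a}x - \frac{b}{a}y = \frac{b}{a}(x - y),
\end{equation*}
so $\tfrac{1}{b}(w - y) = \tfrac{1}{a}(x - y)$. Since $y = \mathsf{J}_a(x)$ gives $\tfrac{1}{a}(x - y) \in T(y)$, we conclude $\tfrac{1}{b}(w - y) \in T(y)$, equivalently $w \in (I + bT)(y)$, hence $y = (I + bT)^{-1}(w) = \mathsf{J}_b(w)$, which is exactly the claimed identity.

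There is no real obstacle here; the only point requiring a word of care is the justification that $\mathsf{J}_a$ and $\mathsf{J}_b$ are genuinely single-valued functions with full domain $H$, so that the manipulations "$y = \mathsf{J}_a(x)$ iff $\tfrac1a(x-y)\in T(y)$" and "$y = \mathsf{J}_b(w)$ iff $\tfrac1b(w-y)\in T(y)$" are bona fide equivalences rather than mere inclusions. This is precisely the classical Minty theorem for maximal monotone operators, already implicitly invoked in the preliminaries where $\mathsf{J}_c$ is introduced as "the single-valued function" $(I+cT)^{-1}$, so I would simply cite it (e.g.\ \cite{BC(17)}) and not reprove it. Everything else is the one-line algebraic identity displayed above.
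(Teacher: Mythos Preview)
Your proof is correct and is the standard direct verification of the resolvent identity from the definition $\mathsf{J}_c=(I+cT)^{-1}$. The paper itself does not prove this lemma at all: it is listed among ``well-known'' facts without proof, so there is no approach to compare against.
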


\begin{lemma}[\cite{MX(04)}]\label{lemmaresolvineq}
If $0<a\leq b$, then for all $x \in H$ it holds that $\norm{\mathsf{J}_a( x)-x} \leq 2 \norm{\mathsf{J}_b (x)-x}$.
\end{lemma}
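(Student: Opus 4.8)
Proof proposal for Lemma~\ref{lemmaresolvineq} (the resolvent inequality $\norm{\mathsf{J}_a(x)-x} \leq 2\norm{\mathsf{J}_b(x)-x}$ for $0 < a \leq b$).

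The plan is to use the resolvent identity of the preceding lemma to express $\mathsf{J}_a(x)$ as a resolvent $\mathsf{J}_b$ evaluated at a point that is a convex-type combination of $x$ and $\mathsf{J}_a(x)$, and then exploit nonexpansiveness of $\mathsf{J}_b$ together with the fact that $\mathsf{J}_b(x)$ is involved implicitly. Writing $t = b/a \geq 1$, the resolvent identity gives
\[
\mathsf{J}_a(x) = \mathsf{J}_b\bigl(t\,x + (1-t)\mathsf{J}_a(x)\bigr) = \mathsf{J}_b\bigl(x - (t-1)(\mathsf{J}_a(x) - x)\bigr).
\]
First I would subtract $\mathsf{J}_b(x)$ from both sides and apply nonexpansiveness of $\mathsf{J}_b$:
\[
\norm{\mathsf{J}_a(x) - \mathsf{J}_b(x)} \leq \norm{\bigl(x - (t-1)(\mathsf{J}_a(x)-x)\bigr) - x} = (t-1)\norm{\mathsf{J}_a(x) - x}.
\]

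Next I would insert this into the triangle inequality $\norm{\mathsf{J}_a(x) - x} \leq \norm{\mathsf{J}_a(x) - \mathsf{J}_b(x)} + \norm{\mathsf{J}_b(x) - x}$, obtaining $\norm{\mathsf{J}_a(x)-x} \leq (t-1)\norm{\mathsf{J}_a(x)-x} + \norm{\mathsf{J}_b(x)-x}$. This would rearrange to $(2-t)\norm{\mathsf{J}_a(x)-x} \leq \norm{\mathsf{J}_b(x)-x}$, which is only useful when $t \leq 2$, i.e. $b \leq 2a$. So the naive approach stalls, and the main obstacle is handling the regime where $b/a$ is large. The fix is to iterate/bootstrap: the inequality just derived, valid for $b \leq 2a$, can be chained along a geometric sequence of parameters $a = a_0 \leq a_1 \leq \dots \leq a_m = b$ with each ratio at most $2$; alternatively, and more cleanly, one proves the sharper bound $\norm{\mathsf{J}_a(x)-x} \leq 2\norm{\mathsf{J}_b(x)-x}$ directly by a monotonicity argument on the function $c \mapsto \norm{\mathsf{J}_c(x) - x}$.

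The cleaner route I would actually pursue: show that $c \mapsto \norm{\mathsf{J}_c(x)-x}$ is nondecreasing and that $c \mapsto \frac{1}{c}\norm{\mathsf{J}_c(x)-x}$ is nonincreasing (both standard facts about Moreau--Yosida resolvents, following from monotonicity of $T$ applied to the pairs $\bigl(\mathsf{J}_a(x), \tfrac{1}{a}(x-\mathsf{J}_a(x))\bigr)$ and $\bigl(\mathsf{J}_b(x), \tfrac{1}{b}(x-\mathsf{J}_b(x))\bigr)$ in the graph of $T$). From the second monotonicity, with $0 < a \leq b$,
\[
\frac{1}{b}\norm{\mathsf{J}_b(x)-x} \leq \frac{1}{a}\norm{\mathsf{J}_a(x)-x},
\]
which is the wrong direction; so I would instead combine the resolvent-identity estimate $\norm{\mathsf{J}_a(x)-\mathsf{J}_b(x)} \leq (b/a - 1)\norm{\mathsf{J}_a(x)-x}$ with the inner-product expansion of $\norm{\mathsf{J}_a(x)-x}^2 = \langle \mathsf{J}_a(x)-x, \mathsf{J}_a(x)-x\rangle$, splitting $\mathsf{J}_a(x)-x = (\mathsf{J}_a(x)-\mathsf{J}_b(x)) + (\mathsf{J}_b(x)-x)$ and using monotonicity of $T$ on the relevant graph pairs to control the cross term. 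Expanding $\langle (b/a)(\mathsf{J}_a(x)-x) + (\mathsf{J}_b(x) - \mathsf{J}_a(x)), \,\cdot\,\rangle$ appropriately yields, after collecting terms, $\norm{\mathsf{J}_a(x)-x} \leq 2\norm{\mathsf{J}_b(x)-x}$ with the constant $2$ emerging uniformly in the ratio $b/a$. The main technical care is the bookkeeping in this expansion to see that all $b/a$-dependence cancels against the factor coming from monotonicity, leaving exactly the constant $2$; since this is the cited result of \cite{MX(04)}, I would follow that reference for the precise arrangement of the algebra rather than rederive it from scratch here.
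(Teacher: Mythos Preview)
The paper does not supply a proof of this lemma; it is stated with a citation to \cite{MX(04)} and used as a black box. So there is no ``paper's own proof'' to compare against.

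That said, your attempt contains a fixable misstep that is worth naming. You invoked the resolvent identity in the form
\[
\mathsf{J}_a(x) = \mathsf{J}_b\bigl(tx + (1-t)\mathsf{J}_a(x)\bigr), \qquad t = b/a \geq 1,
\]
and then compared with $\mathsf{J}_b(x)$ via nonexpansiveness of $\mathsf{J}_b$, picking up the factor $t-1$, which is unbounded as $b/a \to \infty$. This is why you stalled. The remedy is simply to apply the resolvent identity with the roles of $a$ and $b$ exchanged:
\[
\mathsf{J}_b(x) = \mathsf{J}_a\Bigl(\tfrac{a}{b}\,x + \bigl(1 - \tfrac{a}{b}\bigr)\mathsf{J}_b(x)\Bigr).
\]
Now use nonexpansiveness of $\mathsf{J}_a$ and the triangle inequality:
\begin{align*}
\norm{\mathsf{J}_a(x)-x}
&\leq \norm{\mathsf{J}_a(x)-\mathsf{J}_b(x)} + \norm{\mathsf{J}_b(x)-x} \\
&= \Bigl\|\mathsf{J}_a(x) - \mathsf{J}_a\Bigl(\tfrac{a}{b}\,x + \bigl(1 - \tfrac{a}{b}\bigr)\mathsf{J}_b(x)\Bigr)\Bigr\| + \norm{\mathsf{J}_b(x)-x} \\
&\leq \Bigl\|x - \tfrac{a}{b}\,x - \bigl(1 - \tfrac{a}{b}\bigr)\mathsf{J}_b(x)\Bigr\| + \norm{\mathsf{J}_b(x)-x} \\
&= \bigl(1 - \tfrac{a}{b}\bigr)\norm{\mathsf{J}_b(x)-x} + \norm{\mathsf{J}_b(x)-x}
= \bigl(2 - \tfrac{a}{b}\bigr)\norm{\mathsf{J}_b(x)-x}
\leq 2\norm{\mathsf{J}_b(x)-x}.
\end{align*}
The coefficient $1 - a/b \in [0,1)$ is uniformly bounded, and the constant $2$ drops out immediately; no bootstrapping, chaining of geometric parameters, or monotonicity-of-$T$ argument is needed. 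Your detour through Moreau--Yosida monotonicity and inner-product expansions is unnecessary and, as you noticed, does not obviously close.
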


The quantitative analysis carried out in this paper makes use of the notion of monotone functional. For that matter we consider the notion of strong majorizability $\leq^{\ast}$ from \cite{bezem1985strongly} in the following two particular cases. In the case of functions $f,g:\N\to\N$, we have
$$g\leq^* f := \forall n\, \forall m\leq n\, \left( g(m)\leq f(n) \land f(m)\leq f(n)\right),$$
and in the case of functionals $\varphi, \psi:\N\times \N^{\N}\to\N$, we have
$$\varphi\leq^*\psi := \forall n \,\forall f: \N \to \N \,\forall m\leq n \,\forall g\leq^* f\, \left(\varphi(m,g)\leq \psi(n,f) \land \psi(m,g)\leq \psi(n,f)\right).$$
We say that $f$ is monotone if $f\leq^*f$ and similarly for a functional $\varphi$. For functions $f:\N\to\N$, this monotone property coincides with the usual property $\forall n\in \N\, \left(f(n)\leq f(n+1)\right)$. Quantifications over monotone functions $\forall f (f\leq^* f \to \cdots)$ will be abbreviated by $\mforall f (\cdots)$. Due to the particularities of the $\BFI$, our quantitative results quantify over such monotone functions. Note however that for all $f: \N \to \N$, one has $f \leq^* \! f^{\mathrm{maj}}$, where $f^{\mathrm{maj}}(n):= \max_{i \leq n}\{f(i)\}$. Hence there is no real restriction in working with monotone quantifications.

 \subsection{A result by Yao and Noor}\label{Sectionoriginalproof} 
We present the result by Yao and Noor concerning the strong convergence of the $\eqref{PPA}$ (Theorem~\ref{ThmYaoNoor}) and give a detailed description of its proof. 
 This will hopefully guide the reader through our quantitative analysis and the several steps that it requires. 
 
Consider the following set of conditions.
\begin{enumerate}[($H_1$)]
\item\label{H1} $\lim_{n \to \infty} \lambda_n=0$.
\item\label{H2} $\sum_{n=0}^{\infty} \lambda_n=\infty$.
\item\label{H3} $0<\liminf_{n \to \infty}\gamma_n\leq \limsup_{n \to \infty}\gamma_n<1$.
\item\label{H4} $c_n \geq c,$ where $c$ is some positive constant.
\item\label{H5} $c_{n+1}-c_n \to 0$.
\item\label{H6} $\sum_{n=1}^{\infty}\norm{e_n}<\infty$.
\end{enumerate}


\begin{theorem}{\rm (\cite[Theorem 3.3]{YaoNoor2008})}\label{ThmYaoNoor}
Let $(z_n)$ be generated by \eqref{PPA}.
Assume that $(H_{\ref{H1}})$-$(H_{\ref{H6}})$ hold.
Then $(z_n)$ converges strongly to a point $z \in S$, the nearest to $u$.
\end{theorem}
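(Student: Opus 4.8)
The plan is to follow the classical structure for strong convergence of Halpern-type proximal iterations, splitting the argument into an asymptotic regularity part and a projection/$\limsup$ part. First I would establish that the iteration is bounded: using $z \in S$ as an anchor, one shows by induction that $\norm{z_n - z}$ stays bounded, exploiting nonexpansivity of $\mathsf{J}_{c_n}$ (so $\norm{\mathsf{J}_{c_n}(z_n) - z} \le \norm{z_n - z}$), the convexity combination $\lambda_n + \gamma_n + \delta_n = 1$, the fact that $\lambda_n \to 0$ (hence $\lambda_n$ is small), and summability of $\norm{e_n}$ from $(H_{\ref{H6}})$. This gives a uniform bound $M$ on $\norm{z_n - z}$ and on $\norm{z_n}$, $\norm{\mathsf{J}_{c_n}(z_n)}$.

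Next I would prove asymptotic regularity, i.e.\ $\norm{z_{n+1} - z_n} \to 0$. The standard route estimates $\norm{z_{n+1} - z_n}$ in terms of $\norm{z_n - z_{n-1}}$ plus error terms, using: the resolvent identity together with Lemma~\ref{lemmaresolvineq} and condition $(H_{\ref{H4}})$--$(H_{\ref{H5}})$ to control $\norm{\mathsf{J}_{c_n}(z_n) - \mathsf{J}_{c_{n-1}}(z_{n-1})}$ by $\norm{z_n - z_{n-1}}$ plus a term involving $|c_n - c_{n-1}|$; condition $(H_{\ref{H3}})$ to keep the coefficient $\gamma_n$ bounded away from $0$ and $1$; condition $(H_{\ref{H1}})$ so that the contribution of $\lambda_n u$ vanishes; and $(H_{\ref{H6}})$ for the errors. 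One then invokes a standard lemma of Xu/Liu-type (a quantitative analogue of which presumably appears later as a "Suzuki lemma", cf.\ Lemma~\ref{SuzukiLemma2}) with the series condition $(H_{\ref{H2}})$, $\sum \lambda_n = \infty$, to conclude $\norm{z_{n+1} - z_n} \to 0$. From this and nonexpansivity one also deduces $\norm{z_n - \mathsf{J}_{c_n}(z_n)} \to 0$, and then via Lemma~\ref{lemmaresolvineq} and $(H_{\ref{H4}})$ that $\norm{z_n - \mathsf{J}_c(z_n)} \to 0$ for the fixed parameter $c$, so the sequence is "asymptotically regular at the fixed resolvent" whose fixed point set is exactly $S$.

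The projection part comes next. Let $z = P_S u$ be the metric projection of $u$ onto the closed convex set $S$, characterized by $\langle u - z, y - z \rangle \le 0$ for all $y \in S$. Using a sequential weak compactness argument on the bounded sequence $(z_n)$ together with demiclosedness of $I - \mathsf{J}_c$ at $0$ (a consequence of nonexpansiveness and the asymptotic regularity just proved), one shows $\limsup_{n} \langle u - z, z_n - z \rangle \le 0$. Finally, expanding $\norm{z_{n+1} - z}^2$ using the iteration formula and the inequality $\norm{\gamma_n z_n + \delta_n \mathsf{J}_{c_n}(z_n) - (1-\lambda_n) z}^2 \le (1-\lambda_n) \norm{z_n - z}^2$ (again from convexity and nonexpansiveness), one obtains a recursion of the form $a_{n+1} \le (1 - \lambda_n) a_n + \lambda_n \sigma_n + \varepsilon_n$ with $a_n = \norm{z_n - z}^2$, $\limsup \sigma_n \le 0$, and $\sum \varepsilon_n < \infty$; the Xu-type lemma then yields $a_n \to 0$, i.e.\ $z_n \to z$ strongly.

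The main obstacle, and the part the paper evidently targets, is the $\limsup$ step: the passage $\limsup_n \langle u - z, z_n - z\rangle \le 0$ as originally argued goes through sequential weak compactness and the weak limit point lying in $S$, neither of which is constructively available and both of which inflate the complexity of any extracted bound. In a purely qualitative proof this is routine, but to keep the eventual quantitative bound primitive recursive one must replace it by the weaker metric projection principle plus the "arithmetization of the $\limsup$" (Lemma~\ref{lemmarationalapprox2}) feeding into the key estimate (Lemma~\ref{SuzukiLemma2}); for the present qualitative theorem, though, the classical weak-compactness argument suffices and is the only genuinely non-elementary ingredient.
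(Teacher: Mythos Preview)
Your overall architecture---boundedness, asymptotic regularity, projection/$\limsup$, Xu's lemma---matches the paper, and steps (1), (4), (5), (6) are essentially as in the original proof. The gap is in step (2), and it is a real one.

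You propose to prove $\norm{z_{n+1}-z_n}\to 0$ by a direct recursion $\norm{z_{n+1}-z_n}\le (1-s_n)\norm{z_n-z_{n-1}}+\text{errors}$ and then apply an Xu/Liu-type lemma using $\sum\lambda_n=\infty$. Under the stated hypotheses this does not go through: $(H_3)$ only says $0<\liminf\gamma_n\le\limsup\gamma_n<1$, so $(\gamma_n)$ may oscillate (e.g.\ alternate between $1/3$ and $2/3$), and subtracting two consecutive iterates produces terms of size $|\gamma_n-\gamma_{n-1}|$ and $|\delta_n-\delta_{n-1}|$ that you cannot absorb. No Xu-type recursion is available here, and $(H_2)$ plays no role at this stage.

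What the paper (following Yao--Noor) actually does is define $w_n$ by $z_{n+1}=\gamma_n z_n+(1-\gamma_n)w_n$, verify that $\limsup(\norm{w_{n+1}-w_n}-\norm{z_{n+1}-z_n})\le 0$ (this is where $(H_1)$, $(H_4)$, $(H_5)$, $(H_6)$ and the resolvent identity enter), and then apply Suzuki's Lemma~\ref{SuzukiLemma2}. That lemma is \emph{not} of Xu type: its input is $0<\liminf\alpha_n\le\limsup\alpha_n<1$ (here $\alpha_n=1-\gamma_n$, i.e.\ exactly $(H_3)$), and its output is $\norm{w_n-z_n}\to 0$, whence $\norm{z_{n+1}-z_n}=(1-\gamma_n)\norm{w_n-z_n}\to 0$. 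The divergence condition $(H_2)$ is used only once, in the final application of Lemma~\ref{LemmaXu}.

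This also affects your closing remark. The ``arithmetization of the $\limsup$'' in Lemma~\ref{lemmarationalapprox2} is needed precisely to analyse Suzuki's Lemmas~\ref{SuzukiLemma1}--\ref{SuzukiLemma2}, i.e.\ the asymptotic-regularity step, because the original proof of Lemma~\ref{SuzukiLemma1} manipulates the real number $d=\limsup\norm{w_n-z_n}$. So there are two separate noneffective ingredients the quantitative analysis must tame: the $\limsup$ inside Suzuki's argument (step~(2)) and the weak-compactness/projection argument (steps~(4)--(5)).
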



The proof of Theorem~\ref{ThmYaoNoor} relies on Lemma~\ref{SuzukiLemma1} and Lemma~\ref{SuzukiLemma2} below, due to Tomonari Suzuki \cite{Suzuki2005}, for which we give quantitative versions in Subsection~\ref{SectionMainLemmas} (Lemma~\ref{lemmaSuzuki1} and Lemma~\ref{LemmaSuzuki2}).\footnote{In fact, Lemma \ref{SuzukiLemma1} is only used to prove Lemma~\ref{SuzukiLemma2}.} 

\begin{lemma}\textup{(\cite[Lemma 2.1]{Suzuki2005})}\label{SuzukiLemma1}
Let $(z_n)$ and $(w_n)$ be sequences in a Banach space $X$ and let $(\alpha_n)$ be a sequence in $[0,1]$ such that $\limsup \alpha_n < 1$. Suppose that $z_{n+1} = \alpha_nw_n +(1-\alpha_n)z_n$ for all $n \in \N$,
$\limsup (\norm{w_{n+1} -w_n}-\norm{z_{n+1}-z_n})\leq 0$ and $d := \limsup \norm{w_n -z_n}<\infty$. Then
\begin{equation*}
\forall t \in \N \left(\liminf \lvert \,\norm{w_{n+t}-z_n}-(1+\alpha_n+\dots+\alpha_{n+t-1})d \,\rvert =0\right).
\end{equation*}
\end{lemma}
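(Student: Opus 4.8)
The plan is to prove the assertion for a fixed $t \in \N$ by analyzing the behavior of the quantity $a_n := \norm{w_{n+t} - z_n}$ relative to the "weights" $\sigma_n := 1 + \alpha_n + \dots + \alpha_{n+t-1}$. First I would unwind the recurrence $z_{n+1} = \alpha_n w_n + (1-\alpha_n)z_n$ to express $z_{n+t} - z_n$ as a convex-type combination involving $w_n, \dots, w_{n+t-1}$ and $z_n$; a short induction on $t$ gives $z_{n+t} = z_n + \sum_{i=0}^{t-1} \beta_i^{(n)}(w_{n+i} - z_{n+i})$ for suitable coefficients, and more usefully, by telescoping, $z_{n+t} - z_n = \sum_{i=0}^{t-1} \alpha_{n+i}(w_{n+i} - z_{n+i})$. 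From this one wants to compare $\norm{w_{n+t} - z_n}$ with $\norm{w_{n+t} - z_{n+t}} + \norm{z_{n+t} - z_n}$ and, going the other way, with $\norm{w_{n+t} - z_{n+t}} - \norm{z_{n+t} - z_n}$, exploiting that $\norm{w_n - z_n} \to$ has $\limsup$ equal to $d$ and that $\limsup(\norm{w_{n+1}-w_n} - \norm{z_{n+1}-z_n}) \le 0$.

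The core estimate I would aim for is a one-step recursive inequality of the form
\begin{equation*}
\bigl|\,\norm{w_{n+t+1}-z_{n+1}} - \sigma_{n+1} d\,\bigr| \le (1-\alpha_n)\bigl|\,\norm{w_{n+t}-z_n} - \sigma_n d\,\bigr| + \varepsilon_n,
\end{equation*}
where $\varepsilon_n$ is a nonnegative error term built from $\bigl|\norm{w_{n+1}-w_n} - \norm{z_{n+1}-z_n}\bigr|$-type quantities, from $\bigl|\norm{w_n - z_n} - d\bigr|$, and from the increments $\alpha_{n+t} - \alpha_n$ (which enter through $\sigma_{n+1} - \sigma_n$). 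The point is that $\limsup \varepsilon_n \le 0$: the hypotheses force each contributing term to have nonpositive $\limsup$, and $\sigma_{n+1} - \sigma_n = \alpha_{n+t} - \alpha_n$ can be controlled because, while $\alpha_n$ need not converge, the relevant combination appears multiplied against bounded quantities and one only needs the $\liminf$ of $|a_n - \sigma_n d|$ to vanish. To derive this recursion one writes $w_{n+t+1} - z_{n+1} = (w_{n+t+1} - w_{n+t}) + (w_{n+t} - z_n) + (z_n - z_{n+1})$ and uses $z_{n+1} - z_n = \alpha_n(w_n - z_n)$, together with the triangle inequality applied carefully in both directions, plus the approximation $\norm{w_n - z_n} \approx d$ along a suitable subsequence.

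Once the recursive inequality is in place, the conclusion $\liminf_n |a_n - \sigma_n d| = 0$ follows from a standard "Xu-type" lemma on real sequences: if $b_{n+1} \le (1-\alpha_n) b_n + \varepsilon_n$ with $b_n \ge 0$, $\sum \alpha_n = \infty$ (from hypothesis $\limsup \alpha_n < 1$ one does not get this directly — so instead I would use the sharper fact that $\prod(1-\alpha_n)$ handling combined with $\limsup \varepsilon_n \le 0$ forces $\liminf b_n \le 0$), then $\liminf b_n = 0$. More precisely, since $\limsup \alpha_n < 1$ there is $\rho < 1$ and $N$ with $\alpha_n \le \rho$ for $n \ge N$, so $1 - \alpha_n \ge 1 - \rho > 0$; iterating gives $b_{n+m} \le (1-(1-\rho))^{?}\dots$ — rather, one argues by contradiction: if $\liminf b_n = \eta > 0$, pick $N$ so that $\varepsilon_n \le \delta$ and $b_n \ge \eta - \delta$ for $n \ge N$ with $\delta$ small; then $b_{n+1} \le (1-\alpha_n)b_n + \delta \le b_n - \alpha_n(\eta - \delta) + \delta$, and telescoping over a block where $\sum \alpha_n$ is large (using $\sum \alpha_n = \infty$, which here must come from elsewhere — note in the main application $\alpha_n$ relates to $\gamma_n$ bounded away from $0$ and $1$, so $\sum \alpha_n = \infty$ does hold) yields $b_{n+k} < 0$, a contradiction.

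The main obstacle I anticipate is the bookkeeping in the recursive inequality: correctly isolating the term $(1-\alpha_n)|a_n - \sigma_n d|$ while absorbing everything else into an error $\varepsilon_n$ with $\limsup \varepsilon_n \le 0$, given that $\alpha_n$ itself does not converge. The hypothesis $\limsup \alpha_n < 1$ is used precisely to keep the contraction factor $1-\alpha_n$ bounded away from — well, bounded below by a positive constant so the "error-forgetting" works, while the $\limsup \le 0$ hypotheses on the increments are what make the error vanish asymptotically. Handling the index shift by $t$ (so that $\sigma_n$ involves $\alpha_n, \dots, \alpha_{n+t-1}$ and the recursion relates level $n$ to level $n+1$, shifting the whole window) requires care but is mechanical once the $t=1$ case is understood, and I would likely present the general $t$ directly rather than inducting on $t$.
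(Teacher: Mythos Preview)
Your approach has a genuine gap: the recursive inequality you aim for,
\[
\bigl|\,\norm{w_{n+t+1}-z_{n+1}} - \sigma_{n+1} d\,\bigr| \le (1-\alpha_n)\bigl|\,\norm{w_{n+t}-z_n} - \sigma_n d\,\bigr| + \varepsilon_n,
\]
is not obtainable from the decomposition you write down, and the error term you describe is not asymptotically nonpositive. Two concrete obstructions. First, your $\varepsilon_n$ involves $|\norm{w_n-z_n}-d|$, but $d=\limsup\norm{w_n-z_n}$ only gives $\limsup(\norm{w_n-z_n}-d)\le 0$; the quantity $d-\norm{w_n-z_n}$ can stay bounded away from zero along long stretches, so $\limsup\varepsilon_n\le 0$ fails in general. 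Second, the Xu-type step you invoke at the end needs $\sum\alpha_n=\infty$, which is \emph{not} a hypothesis of the lemma (take $\alpha_n\equiv 0$); you noticed this and appealed to the ``main application,'' but the lemma must stand on its own hypotheses.

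The argument that actually works (Suzuki's, and what the paper's quantitative version in Lemma~\ref{lemmaSuzuki1} follows) is not a forward recursion in $n$ but a \emph{backward recursion in $j$ within a single window} $[m,m+t]$. One first chooses $m$ large so that $\norm{w_{m+t}-z_{m+t}}\ge d-\varepsilon$ (possible since $d$ is the $\limsup$) while simultaneously $\norm{w_n-z_n}\le d+\varepsilon$ and $\norm{w_{n+1}-w_n}-\norm{z_{n+1}-z_n}\le\varepsilon$ for all $n\ge m$. Then, for $j=t-1,t-2,\dots,0$, one proves
\[
\norm{w_{m+t}-z_{m+j}} \ge \Bigl(1+\sum_{i=j}^{t-1}\alpha_{m+i}\Bigr)d - C_j\,\varepsilon,
\]
by writing $z_{m+j}=\alpha_{m+j-1}w_{m+j-1}+(1-\alpha_{m+j-1})z_{m+j-1}$, which yields
\[
\norm{w_{m+t}-z_{m+j}}\le \alpha_{m+j-1}\norm{w_{m+t}-w_{m+j-1}}+(1-\alpha_{m+j-1})\norm{w_{m+t}-z_{m+j-1}},
\]
and then \emph{solving for} $\norm{w_{m+t}-z_{m+j-1}}$ by dividing through by $1-\alpha_{m+j-1}$. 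This division is exactly where $\limsup\alpha_n<1$ enters: it guarantees $1-\alpha_{m+j-1}\ge 1/a>0$ so the error growth is controlled. The term $\norm{w_{m+t}-w_{m+j-1}}$ is handled by telescoping $\sum\norm{w_{m+i+1}-w_{m+i}}$, using the hypothesis to replace each summand by $\norm{z_{m+i+1}-z_{m+i}}+\varepsilon=\alpha_{m+i}\norm{w_{m+i}-z_{m+i}}+\varepsilon\le\alpha_{m+i}(d+\varepsilon)+\varepsilon$. At $j=0$ this gives the lower bound $\norm{w_{m+t}-z_m}\ge\sigma_m d - C\varepsilon$; the matching upper bound is similar, and since $\varepsilon>0$ was arbitrary and such $m$ exist arbitrarily far out, the $\liminf$ vanishes. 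No assumption of the form $\sum\alpha_n=\infty$ is needed.
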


\begin{lemma}\label{SuzukiLemma2}\textup{(\cite[Lemma 2.2 ]{Suzuki2005})}
Let $(z_n)$ and $(w_n)$ be bounded sequences in a Banach space $X$ and let $(\alpha_n)$ be a sequence in $[0,1]$ with $0< \liminf \alpha_n \leq \limsup \alpha_n <1$. Suppose that $z_{n+1}=\alpha_{n}w_n+(1-\alpha_n)z_n$ for all $n \in \N$, and $\limsup (\norm{w_{n+1} -w_n}-\norm{z_{n+1}-z_n})\leq 0$. Then $\lim \norm{w_n-z_n}=0$.
\end{lemma}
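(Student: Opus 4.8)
The plan is to derive the conclusion $\lim\norm{w_n-z_n}=0$ from Lemma~\ref{SuzukiLemma1} by contradiction. First, since $(z_n)$ and $(w_n)$ are bounded in $X$, both $\limsup\alpha_n<1$ and $d:=\limsup\norm{w_n-z_n}<\infty$ hold automatically, so the hypotheses of Lemma~\ref{SuzukiLemma1} are met; moreover $\liminf\alpha_n>0$ gives a constant $a\in(0,1)$ and an index $N_0$ with $\alpha_n\geq a$ for all $n\geq N_0$. Suppose for contradiction that $d>0$. The idea is to use the freedom in the parameter $t$ in Lemma~\ref{SuzukiLemma1}: for each $t$, there is a subsequence along which $\norm{w_{n+t}-z_n}$ is close to $(1+\alpha_n+\dots+\alpha_{n+t-1})\,d$, which for large $n$ (so that all the relevant $\alpha$'s are $\geq a$) is at least roughly $(1+ta)\,d$. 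The hard part is to contradict this with an upper bound on $\norm{w_{n+t}-z_n}$ that grows more slowly in $t$.

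For the upper bound, I would estimate $\norm{w_{n+t}-z_n}\leq \norm{w_{n+t}-z_{n+t}} + \norm{z_{n+t}-z_n}$. The first summand is at most $d+\varepsilon$ for $n$ large. For the second, telescoping the recurrence $z_{k+1}-z_k=\alpha_k(w_k-z_k)$ gives $\norm{z_{n+t}-z_n}\leq\sum_{k=n}^{n+t-1}\alpha_k\norm{w_k-z_k}\leq\sum_{k=n}^{n+t-1}\alpha_k(d+\varepsilon)$ for $n$ large (using $\limsup\norm{w_k-z_k}\le d$). Hence $\norm{w_{n+t}-z_n}\leq (d+\varepsilon) + \bigl(\sum_{k=n}^{n+t-1}\alpha_k\bigr)(d+\varepsilon) = (d+\varepsilon)\bigl(1+\alpha_n+\dots+\alpha_{n+t-1}\bigr)$. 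Combining this with the lower bound coming from Lemma~\ref{SuzukiLemma1}, along the $\liminf$-witnessing subsequence we get, for any fixed $t$ and $n$ large in that subsequence,
\begin{equation*}
(1+\alpha_n+\dots+\alpha_{n+t-1})\,d - \varepsilon \;\leq\; \norm{w_{n+t}-z_n} \;\leq\; (d+\varepsilon)(1+\alpha_n+\dots+\alpha_{n+t-1}).
\end{equation*}
This is not yet a contradiction, so the telescoping must be sharpened: I would instead track more carefully that $\norm{w_k-z_k}$ must in fact dip well below $d$ infinitely often if $\limsup$ equals $d$ but the sequence does not converge, or—following Suzuki—use that $\limsup(\norm{w_{n+1}-w_n}-\norm{z_{n+1}-z_n})\le0$ forces $\norm{w_{n+t}-z_n}$ to be close to $\norm{w_n-z_n}+\norm{z_{n+t}-z_n}$ only when the $w$-increments nearly cancel, which pins $\norm{w_n-z_n}$ near $d$ on that subsequence and then propagates. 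Letting $t\to\infty$ after $n\to\infty$ along the subsequence, the left side grows like $(1+ta)d$ while a refined right side grows like $(1+ta)d' $ with some $d'<d$ unless $\norm{w_n-z_n}\to d$ everywhere; one then plays the two estimates against each other for $t$ large enough that the additive $\varepsilon$-slack is overwhelmed, yielding the contradiction $d\le 0$.

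The main obstacle I anticipate is precisely closing this gap: the naive two estimates above are compatible, so the argument genuinely needs the hypothesis $\limsup(\norm{w_{n+1}-w_n}-\norm{z_{n+1}-z_n})\le 0$ to be exploited a second time (beyond its use inside Lemma~\ref{SuzukiLemma1}), in order to show that on the $\liminf$-witnessing subsequence one actually has $\norm{w_k-z_k}\approx d$ for all $k$ between $n$ and $n+t$, not merely $\limsup\norm{w_k-z_k}\le d$. Once that uniform lower estimate $\norm{w_k-z_k}\ge d-\varepsilon$ on the block is in hand, the upper bound becomes $\norm{w_{n+t}-z_n}\le \norm{w_n-z_n}+\sum\alpha_k\norm{w_k-z_k}$ compared against the lower bound $\approx \norm{w_n-z_n}+(\sum\alpha_k) d$ wait—one instead bounds above by comparing $\norm{w_{n+t}-z_n}$ with $\norm{w_{n}-z_n} + \sum_{k=n}^{n+t-1}\norm{w_{k+1}-w_k}$ via the triangle inequality through the $w$'s, uses the asymptotic-regularity-type hypothesis to replace $\norm{w_{k+1}-w_k}$ by $\le\norm{z_{k+1}-z_k}+\varepsilon=\alpha_k\norm{w_k-z_k}+\varepsilon$, and then the telescoped sum matches the Lemma~\ref{SuzukiLemma1} expression up to $t\varepsilon$, so dividing by $t$ and sending $t\to\infty$ kills the slack and forces $d=0$.
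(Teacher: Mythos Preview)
Your approach has the right overall shape---argue by contradiction, invoke Lemma~\ref{SuzukiLemma1}, and exploit the growth in $t$---but you are missing the single key observation that closes the argument, and as a result you wander into telescoping estimates that cannot work.

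The point you miss is this: since $(z_n)$ and $(w_n)$ are \emph{bounded} sequences, there is a constant $M$ with $\norm{w_{n+t}-z_n}\leq M$ for \emph{all} $n,t$. You do not need any upper bound that ``grows more slowly in $t$''; you need one that does not grow at all, and boundedness hands it to you for free. Lemma~\ref{SuzukiLemma1} gives, for each fixed $t$, indices $n$ (arbitrarily large, in particular with $\alpha_{n+i}\geq a$ for $i<t$) at which
\[
\norm{w_{n+t}-z_n}\;\geq\;\Bigl(1+\sum_{i=0}^{t-1}\alpha_{n+i}\Bigr)d-\varepsilon\;\geq\;(1+ta)\,d-\varepsilon.
\]
Now simply choose $t$ so large that $(1+ta)d-\varepsilon>M$; this is possible precisely because $d>0$ and $a>0$. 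That is the whole contradiction. This is exactly how the paper's quantitative version (Lemma~\ref{LemmaSuzuki2}) proceeds: with $N$ bounding $\norm{z_n}$ and $\norm{w_n}$, one takes $t=\max\{2Na(k+1),1\}$ so that $(1+t/a)\cdot\frac{1}{k+1}\geq 2N+\frac{1}{k+1}$, and the chain of inequalities collapses to $2N+\frac{1}{k+1}<\norm{w_{m+t}-z_m}+\frac{1}{k+1}\leq 2N+\frac{1}{k+1}$.

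Your telescoping upper bound $(d+\varepsilon)\bigl(1+\sum\alpha_k\bigr)$ is correct but useless here, as you noticed, because it matches the lower bound; and your subsequent attempts to refine it (tracking $\norm{w_k-z_k}\approx d$ on blocks, or dividing by $t$) are unnecessary detours once you remember that $\norm{w_{n+t}-z_n}$ is uniformly bounded outright.
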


The proof of Theorem~\ref{ThmYaoNoor} is divided in the following steps:
\begin{enumerate}[(1)]
\item \underline{Show that $(z_n)$ is bounded.} This is just a simple proof by induction and some easy computations.
\item \underline{$\lim \norm{z_{n+1}-z_n}=0$.} Letting $z_{n+1}=\gamma_nz_n+(1-\gamma_n)w_n$, it is shown first that $\limsup (\norm{w_{n+1}-w_n}-\norm{z_{n+1}-z_n})\leq 0$. Using Lemma~\ref{SuzukiLemma2} one concludes that $\lim \norm{z_n-w_n}=0$ which,  by the definition of $w_n$, is enough to conclude this step.
\item \underline{$\lim \norm{\mathsf{J}_{c}(z_n)-z_n}=0$.} From step (2) and the hypothesis of the theorem one concludes that $\lim \norm{\mathsf{J}_{c_n}(z_n)-z_n}=0$. The conclusion follows from Lemma~\ref{lemmaresolvineq}.
\item \underline{Projection argument.} With $\tilde{p}$ the projection point of $u$ onto $S$ it is shown that $\forall q \in S (\langle \tilde{p}-u,\tilde{p}-q\rangle\leq 0)$. 
\item \underline{Sequential weak compactness and demiclosedness.} Pick a subsequence $(z_{n_j})$ of $(z_n)$ such that $\limsup \langle \tilde{p}-u,\tilde{p}-z_n\rangle= \lim_{j}\langle \tilde{p}-u,\tilde{p}-z_{n_j}\rangle $ and $(z_{n_j})$ converges weakly to some $q\in S$. Here the following demiclosedness principle is used.

\begin{lemma}[Demiclosedness principle \cite{B(65)}]
Let $C$ be a closed convex subset of $H$ and let $f : C \to C$ be a nonexpansive mapping such that $\mathrm{Fix}(f)\neq \emptyset$. Assume that $(x_n)$ is a sequence in $C$ such that $(x_n)$ weakly converges to $x \in C$ and  $((I - f)(x_n))$ converges strongly to $y \in H$. Then $(I - f)(x) = y$.
\end{lemma}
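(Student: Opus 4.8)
The plan is to prove this demiclosedness principle via the \emph{Opial property} of a Hilbert space, which I would first isolate in the following form: if $(x_n)$ converges weakly to $x$ in $H$, then $\liminf_n \norm{x_n-x} < \liminf_n \norm{x_n-z}$ for every $z\in H$ with $z\neq x$. This is immediate from expanding $\norm{x_n-z}^2=\norm{x_n-x}^2+2\langle x_n-x,\,x-z\rangle+\norm{x-z}^2$: since $x_n-x$ converges weakly to $0$, the middle term tends to $0$, so $\liminf_n\norm{x_n-z}^2=\liminf_n\norm{x_n-x}^2+\norm{x-z}^2$, and the added constant is strictly positive precisely when $z\neq x$.

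With this at hand I would put $z:=x-y$ and prove $f(x)=z$, which is exactly the assertion $(I-f)(x)=y$. Since $(I-f)(x_n)\to y$ strongly, we have $f(x_n)-x_n\to -y$, hence $f(x_n)=x_n+(f(x_n)-x_n)$ converges weakly to $x-y=z$; moreover $\lvert\,\norm{f(x_n)-z}-\norm{x_n-x}\,\rvert\le\norm{f(x_n)-x_n+y}\to 0$, so $\liminf_n\norm{f(x_n)-z}=\liminf_n\norm{x_n-x}$. On the other hand, nonexpansiveness of $f$ gives $\norm{f(x_n)-f(x)}\le\norm{x_n-x}$ for every $n$, whence $\liminf_n\norm{f(x_n)-f(x)}\le\liminf_n\norm{x_n-x}$. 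Now if $f(x)\neq z$, then applying the Opial inequality to the sequence $(f(x_n))$, which converges weakly to $z$, at the point $f(x)$ gives $\liminf_n\norm{f(x_n)-z}<\liminf_n\norm{f(x_n)-f(x)}$; chaining the three relations yields $\liminf_n\norm{x_n-x}<\liminf_n\norm{x_n-x}$, a contradiction. Therefore $f(x)=z=x-y$.

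I do not expect a serious obstacle here; the only points requiring a little care are that the weak convergence of $(f(x_n))$ is obtained by adding the strongly null sequence $(f(x_n)-x_n+y)$ to the weakly convergent sequence $(x_n)$, and that this same estimate is what equates the two relevant $\liminf$s. For completeness I would also remark that the hypothesis $\mathrm{Fix}(f)\neq\emptyset$ is not actually used in the argument (it merely holds automatically in the intended application, where $f=\mathsf{J}_c$ and $\mathrm{Fix}(\mathsf{J}_c)=S$), and that in that application an alternative proof is available directly from the maximal monotonicity of $T$, by passing to the weak limit in the inclusion $x_n-\mathsf{J}_c(x_n)\in c\,T(\mathsf{J}_c(x_n))$. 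I would keep the write-up short while emphasizing the Opial step, since it is that step whose finitary content becomes relevant in the quantitative analysis carried out later in the paper.
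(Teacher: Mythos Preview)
Your proof is correct and is essentially the standard Opial-property argument for the demiclosedness of $I-f$ in Hilbert space. However, there is nothing to compare it against: the paper does not prove this lemma. It is merely quoted, with attribution to \cite{B(65)}, inside the description of step~(5) of Yao and Noor's original proof in Subsection~\ref{Sectionoriginalproof}; no argument is supplied. Indeed, the whole point of Section~\ref{SectionRestricting} is that the quantitative analysis \emph{bypasses} the demiclosedness principle and the sequential weak compactness argument altogether (via Proposition~\ref{removal_weakcompact}), so the paper has no need to reprove it.

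Your side remarks are apt: the hypothesis $\mathrm{Fix}(f)\neq\emptyset$ plays no role in the Opial argument, and for $f=\mathsf{J}_c$ the alternative proof through maximal monotonicity of $T$ (passing to the weak limit in $x_n-\mathsf{J}_c(x_n)\in c\,T(\mathsf{J}_c(x_n))$ and using that the graph of a maximal monotone operator is weak--strong closed) is equally valid. Your closing comment that the Opial step is what carries finitary content in the later analysis slightly overstates the situation: the paper eliminates this step rather than finitizing it, replacing the projection/weak-compactness/demiclosedness package by the approximate-projection machinery of Propositions~\ref{projection} and~\ref{projection3}.
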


By step (4) it follows that $\limsup \langle \tilde{p}-u,\tilde{p}-z_n\rangle \leq 0$.
\item  \underline{Main combinatorial part.} In this final step it is shown that the conditions of Lemma~\ref{LemmaXu} below are satisfied. The application of this lemma is enough to conclude the result.

\begin{lemma}\textup{(\cite[Lemma~2.5]{X(02)})}\label{LemmaXu}
Let $(a_n)$ be a sequence of nonnegative real numbers such that for all $n \in \N$
\begin{equation}\label{xu_lem_main_ass}
a_{n+1}\leq (1 - s_n)a_n + s_nt_n + \delta_n,
\end{equation}
where $(s_n)\subset  [0, 1]$ and $(t_n), (\delta_n)$ are such that $\sum_{n=0}^{\infty}s_n=\infty$, $\limsup_{n \to \infty}t_n\leq 0$ and $\sum_{n=0}^{\infty} \delta_n <\infty$. Then $(a_n)$ converges to zero.
\end{lemma}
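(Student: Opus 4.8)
The plan is to prove this classical lemma (Xu's lemma) by the standard technique: first reduce the condition $\limsup_n t_n \le 0$ together with $\sum_n s_n = \infty$ to a convenient ``$\varepsilon$-absorption'' estimate, and then iterate the recursion over a suitably chosen window. Fix $\varepsilon > 0$. Since $\limsup_n t_n \le 0$, there is $N_1$ with $t_n \le \varepsilon$ for all $n \ge N_1$; since $\sum_n \delta_n < \infty$, there is $N_2 \ge N_1$ with $\sum_{n \ge N_2} \delta_n \le \varepsilon$. From \eqref{xu_lem_main_ass}, for $n \ge N_2$ we get $a_{n+1} \le (1-s_n)a_n + s_n\varepsilon + \delta_n$. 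Writing $b_n := a_n - \varepsilon$ (not necessarily nonnegative, but bounded below by $-\varepsilon$), one obtains $b_{n+1} \le (1-s_n)b_n + \delta_n$, and unwinding this from $N_2$ to $N$ gives
\[
a_N \le \varepsilon + \left(\prod_{k=N_2}^{N-1}(1-s_k)\right)(a_{N_2}-\varepsilon)^+ + \sum_{k=N_2}^{N-1}\delta_k \le 2\varepsilon + \left(\prod_{k=N_2}^{N-1}(1-s_k)\right)(a_{N_2}+\varepsilon),
\]
using $1-s_k \in [0,1]$ to drop the product on the error sum and on the possibly-negative part.

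The second step is to make the product factor small. Since $(s_k) \subset [0,1]$ and $\sum_k s_k = \infty$, the elementary inequality $1-x \le e^{-x}$ yields $\prod_{k=N_2}^{N-1}(1-s_k) \le \exp\!\big({-}\sum_{k=N_2}^{N-1}s_k\big) \to 0$ as $N \to \infty$. Hence there is $N_3 \ge N_2$ such that for all $N \ge N_3$ the product is at most $\varepsilon/(a_{N_2}+\varepsilon+1)$, which forces $a_N \le 3\varepsilon$. As $\varepsilon > 0$ was arbitrary and $a_N \ge 0$, this shows $\limsup_N a_N \le 0$, i.e.\ $a_N \to 0$.

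The only mildly delicate point — and the one I would state carefully — is handling the sign: $a_{N_2} - \varepsilon$ may be negative, so one should either replace it by its positive part (as above) or simply bound $b_{N_2} \le a_{N_2}$ and keep in mind that $(1-s_k) \ge 0$ so the iterated inequality is preserved without sign issues. There is no real obstacle here; it is a self-contained induction plus the two standard limit facts ($\limsup$ unravelling and $\prod(1-s_k) \to 0$ under a divergent sum), so the proof is short and purely elementary. (For the purposes of this paper, what matters is that the argument is effective: given a rate for $\sum s_k \to \infty$, a rate for $\limsup t_n \le 0$, and a Cauchy-rate for $\sum \delta_n$, the thresholds $N_1, N_2, N_3$ above are computed explicitly, yielding a rate of convergence for $(a_n)$ — this is exactly the quantitative form of the lemma that will be used in Subsection~\ref{Sectionrevisited}.)
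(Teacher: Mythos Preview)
Your proof is correct and is the standard argument for Xu's lemma; however, the paper does not actually prove Lemma~\ref{LemmaXu} at all --- it is only \emph{stated} (with a citation to \cite{X(02)}) as part of the exposition of step~(6) of Yao and Noor's original proof in Subsection~\ref{Sectionoriginalproof}. What the paper \emph{does} develop are the quantitative substitutes Lemma~\ref{lemmaqtXu1} (imported from \cite{PP(ta)}) and Lemma~\ref{lemmaqtXu2}, and your closing parenthetical remark is exactly right in spirit: feeding explicit rates for $\sum s_k=\infty$, $\limsup t_n\le 0$, and the Cauchy property of $\sum \delta_n$ into your thresholds $N_1,N_2,N_3$ is precisely how those quantitative lemmas arise (though the application happens in the proof of Theorem~\ref{theoremyaonoor}, not in Subsection~\ref{Sectionrevisited}).

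One small technical remark: when you ``drop the product on the error sum'' you are using $\delta_k\ge 0$, which is the intended (and standard) reading of $\sum \delta_n<\infty$ here but is not stated explicitly; without it one would need to keep the products and appeal to absolute convergence.
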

\end{enumerate}

\section{Avoiding roadblocks}\label{SectionRestricting}

From the point of view of proof mining the analysis of the original proof of Theorem~\ref{ThmYaoNoor} presents some difficulties that prevent the extraction of simple bounds. These concern the projection argument in step (4), the weak compactness in step (5) and the assumed existence of the $\limsup$ in Suzuki's lemmas. In Subsections~\ref{SectionProjection} and \ref{Sectionswc} we adapt the way to deal with projection \cite{kohlenbach2011quantitative} and sequential weak compactness \cite{FFLLPP(19)} to our context. Furthermore, in Subsection~\ref{Sectionswc} we give a quantitative version of Lemma~\ref{LemmaXu}. In Subsection~\ref{SectionBypassACA} we give an arithmetization of the $\limsup$ that allows to obtain a quantitative version of Lemma~\ref{SuzukiLemma2}. A more detailed explanation to why these principles are problematic will be given in Section~\ref{Sectionlogic}.
 
\subsection{Projection argument}\label{SectionProjection}

In this section we deal with the following projection argument which is used in the original proof
\begin{equation}\label{projarg}
 \exists p\in S \, \forall k\in \N \,\forall q\in S  \left(\|p-u\|^2\leq \|q-u\|^2 + \frac{1}{k+1}\right),
 \end{equation}
 stating that there is a zero of $T$ that is the nearest to a given $u \in H$. 
The squares are added here only for an easier connection to the inner product of the space that will be required below.

As noticed by Kohlenbach \cite{kohlenbach2011quantitative}, instead of \eqref{projarg}, it is enough for the quantitative analysis to consider the weaker statement
\begin{equation}\label{projarg_weak}
\forall k\in \N \,\exists p\in S\, \forall q\in S  \left(\|p-u\|^2\leq \|q-u\|^2 + \frac{1}{k+1}\right).
\end{equation}

 While the proof of \eqref{projarg} requires the use of countable choice, the statement \eqref{projarg_weak} can be proved by a simple induction argument and this fact is reflected on the extracted bounds, which are then recursively defined. 

An analysis of the projection argument via the monotone functional interpretation was previously carried out by Kohlenbach \cite{kohlenbach2011quantitative}. A detailed explanation of the analysis of the projection argument using the bounded functional interpretation was shown in \cite{FFLLPP(19)}. In the latter, the assumption that one is working in a bounded set plays an important role in simplifying the interpretation. Although we do not have that assumption here, we can equivalently consider the projection statement in \eqref{projarg} restricted to a big enough ball centered at some zero point $s \in S$. This restriction was considered in \cite{PP(ta)}, giving rise to the quantitative version in Proposition~\ref{projection} bellow.

\begin{notation}
From now on, we will write $\mathsf{J}$ instead of $\mathsf{J}_{\frac{1}{c}}$ and $\mathsf{J}_n$ instead of $\mathsf{J}_{c_{n}}$, under the assumption that $c\in \N \setminus \{0 \}$ satisfies $c_n \geq \frac{1}{c}$ for all $n \in \N$.
For each $r\in \N$ and function $f:\N \to \N$ we denote the $r$-fold iteration of $f$ by $f^{(r)}$. I.e.\ $f^{(0)}\equiv {\rm Id}$ and $f^{(r+1)}=f(f^{(r)})$.
Let $s$ be a zero of $T$. For any $N\in \N$, let ${\rm B}_{N}:=\left\{x\in H : \|x-s\|\leq N\,\right\}$ denote the closed ball centered at $s$ with radius $N$. The zero point is always made clear by the context.
\end{notation}

\begin{proposition}[\cite{PP(ta)}]\label{projection}
	Let $N\in \N\setminus \{0\}$ be such that $N\geq 2\|u-s\|$ for some $s\in S$.\\
	For any natural number $k$ and monotone function $f:\N \to \N $, there are $n \leq f^{(r)}(0)$ and $p\in {\rm B}_{N}$ such that
	$$\|\mathsf{J}(p)-p\| \leq \frac{1}{f(n)+1}$$
	and
	$$\forall q \in {\rm B}_{N}\, 
	\left(\|\mathsf{J}(q)-q \|\leq \frac{1}{n+1} \to \|p-u\|^2 \leq \|q-u\|^2 + \frac{1}{k+1}\right),$$
	where $r:=N^2(k+1)$.
\end{proposition}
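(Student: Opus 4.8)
The plan is to prove Proposition~\ref{projection} as an effective, finitary version of \eqref{projarg_weak}, following the strategy of \cite{kohlenbach2011quantitative,FFLLPP(19),PP(ta)}. The starting point is the observation that the metric projection of $u$ onto the (closed, convex, nonempty) set $S$ can be characterized via the squared distance functional $q\mapsto \|q-u\|^2$, and that on the ball ${\rm B}_N$ this functional, although not attaining its infimum in a \emph{constructive} way, can be approximated: for each precision $1/(k+1)$ we want a point $p\in{\rm B}_N$ that is within $1/(k+1)$ of being a minimizer, with the further feature that $p$ is \emph{near-fixed} for $\mathsf{J}$ (hence near $S$, since $\mathrm{Fix}(\mathsf{J})=S$), the degree of near-fixedness being dictated by the counterfunction $f$. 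The mechanism that produces such a $p$ together with the bound $f^{(r)}(0)$ is a finite iteration argument: one builds a chain of candidate points by repeatedly halving/improving the objective, and the pigeonhole-style observation that the objective lives in a bounded range (here $\|q-u\|^2\le N^2$ on ${\rm B}_N$, since $\|q-u\|\le\|q-s\|+\|s-u\|\le N+N/2$ — more carefully one uses $N\ge 2\|u-s\|$ to keep things in range) forces the process to stabilize after at most $r=N^2(k+1)$ steps.

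Concretely, I would argue as follows. First fix $k$ and the monotone $f$, and set $r:=N^2(k+1)$. Define a finite sequence of ``attempts'' $n_0=0$, and given $n_i$, use the fact that $\inf_{q\in {\rm B}_N}\|q-u\|^2$ is approached to within any rational accuracy to either (a) find a point $p\in {\rm B}_N$ with $\|\mathsf{J}(p)-p\|\le \frac{1}{f(n_i)+1}$ whose objective value witnesses the desired inequality $\|p-u\|^2\le\|q-u\|^2+\frac1{k+1}$ for every $q\in{\rm B}_N$ with $\|\mathsf{J}(q)-q\|\le\frac1{n_i+1}$ — in which case we are done with $n=n_i$ — or (b) detect a point $q_i\in{\rm B}_N$ that is $\frac1{n_i+1}$-near-fixed for $\mathsf{J}$ but whose objective value is smaller by at least $\frac1{k+1}$, and then set $n_{i+1}:=f(n_i)$, so that the next round works with a strictly better bound on the objective. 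Since each successful application of case (b) drops the objective (which started below $N^2$) by at least $\frac{1}{k+1}$, case (b) can occur at most $N^2(k+1)=r$ times; hence case (a) must trigger at some stage $i\le r$, and monotonicity of $f$ gives $n=n_i\le f^{(i)}(0)\le f^{(r)}(0)$. That $\|\mathsf{J}(p)-p\|\le\frac1{f(n)+1}$ implies $p$ is ``morally in $S$'' up to the stated tolerance is exactly what allows the comparison with genuine elements of $S$ to be replaced by comparison with near-fixed points of $\mathsf{J}$; the quantitative resolvent inequality (Lemma~\ref{lemmaresolvineq}) and nonexpansiveness of $\mathsf{J}$ are the tools that keep the estimates uniform over the ball.

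The one genuinely delicate point — and the step I expect to be the main obstacle — is justifying the ``either (a) or (b)'' dichotomy in a way that is legitimately finitary, i.e.\ without smuggling back in the very instance of countable choice / weak compactness that we are trying to avoid. The resolution is that the dichotomy is decidable relative to a constructive approximation of the infimum $\iota:=\inf_{q\in{\rm B}_N}\|q-u\|^2$ (which exists as a real number computable from the data, since the squared-distance functional is uniformly continuous on the bounded convex set ${\rm B}_N$ and one can compute arbitrarily good rational lower bounds for it): given a rational $\varepsilon$ one can decide whether the current near-fixed tolerance already forces all $\varepsilon$-near-fixed points to have objective within $\frac1{k+1}$ of a chosen witness, simply because both ``sides'' are statements about a $\sup$/$\inf$ of a continuous function over a compact convex set, approximable to any precision. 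This is precisely the maneuver of \cite{FFLLPP(19), PP(ta)}: replacing the honest projection point by a finite search among near-fixed points, with the halving-of-the-objective counter giving the explicit bound $f^{(r)}(0)$. Once that dichotomy is in place, the remaining computations — bounding $\|q-u\|$ by $N+\tfrac{N}{2}$ on ${\rm B}_N$, checking that $f^{(i)}(0)$ is increasing in $i$, assembling the final inequality — are routine.
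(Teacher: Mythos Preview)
First, note that the paper does not actually prove Proposition~\ref{projection}; it is quoted from \cite{PP(ta)}. So there is no ``paper's own proof'' to compare against here, and I will simply assess your sketch on its merits.

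Your overall strategy --- a finite descent on the objective $\|p-u\|^2$, with each failed attempt producing a point whose objective is at least $\tfrac{1}{k+1}$ smaller, forcing termination within $r=N^2(k+1)$ steps --- is the right one, and is the standard argument behind such statements. However, two points in your write-up need fixing.

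\emph{The direction of the iteration is wrong.} You set $n_0=0$, $n_{i+1}=f(n_i)$, and in case~(b) obtain a point $q_i$ that is $\tfrac{1}{n_i+1}$-near-fixed. But to use $q_i$ as the candidate $p$ at the \emph{next} stage $n_{i+1}=f(n_i)$, you would need $\|\mathsf{J}(q_i)-q_i\|\le\tfrac{1}{f(n_{i+1})+1}$, which is much stronger than what case~(b) delivers. The argument that actually closes runs \emph{backward}: set $n_i:=f^{(i)}(0)$ for $i=0,\dots,r$, start with $p_r:=s\in S$ (an exact fixed point, so $\|\mathsf{J}(p_r)-p_r\|=0$), and for $i$ descending from $r$ to $1$ test the conclusion with $n=n_{i-1}$ and $p=p_i$. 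The required hypothesis $\|\mathsf{J}(p_i)-p_i\|\le\tfrac{1}{f(n_{i-1})+1}=\tfrac{1}{n_i+1}$ is exactly what the previous step provides. If the conclusion fails at that stage, the counterexample $q$ satisfies $\|\mathsf{J}(q)-q\|\le\tfrac{1}{n_{i-1}+1}$ and $\|p_i-u\|^2>\|q-u\|^2+\tfrac{1}{k+1}$; set $p_{i-1}:=q$ and continue. If all $r$ stages fail, one obtains $\|s-u\|^2>\|p_0-u\|^2+\tfrac{r}{k+1}\ge N^2$, contradicting $\|s-u\|\le N/2$.

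\emph{The ``delicate dichotomy'' is not delicate.} Your final paragraph worries about constructively deciding the (a)/(b) split and invokes approximation of infima over compact convex sets. None of this is needed. The dichotomy is simply excluded middle applied to the conclusion of the proposition at a \emph{specific} $n=n_{i-1}$ and a \emph{specific} $p=p_i$ already in hand: either the conclusion holds (done), or its negation produces a single counterexample $q$. There is no infimum to compute, no compactness to invoke, and no risk of reintroducing countable choice, precisely because the iteration is of fixed finite length $r$. Likewise, neither Lemma~\ref{lemmaresolvineq} nor the nonexpansiveness of $\mathsf{J}$ is used anywhere in this argument.
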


In Proposition~\ref{projection}, we considered only the (almost) fixed points of $\mathsf{J}$ since the fixed point set of all resolvent functions coincide. We prove the following quantitative version which requires majorizing information on the sequence of real numbers $(c_n)$.  
\begin{lemma}\label{samefix-pt-set}
	Let $(c_n) \subset \R^+$ and $c\in \N\setminus\{0\}$ satisfying $(Q_{\ref{ineqcn}})$. Consider $\mathcal{C}:\N\to\N$ a monotone function satisfying $\forall n\in\N\, \left( c_n\leq \mathcal{C}(n) \right)$. Let $\zeta:\N\times \N\to\N$ be the monotone function defined by
	$\zeta(k,n):=\mathcal{C}(n)c(k+1)-1$. For any $k,n\in \N$ and any $p\in H$,
	\begin{equation}\label{zeta_monotone}
	\norm{\mathsf{J}(p)-p}\leq \frac{1}{\zeta(k,n)+1} \;\to\; \forall n'\leq n \left(\norm{\mathsf{J}_{n'}(p)-p}\leq \frac{1}{k+1}\right).
	\end{equation}
\end{lemma}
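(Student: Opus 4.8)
The plan is to reduce the claim to the resolvent inequality of Lemma~\ref{lemmaresolvineq} together with the elementary scaling behaviour of the resolvent under a change of parameter. Recall that $\mathsf{J}=\mathsf{J}_{1/c}$ and $\mathsf{J}_{n'}=\mathsf{J}_{c_{n'}}$, and that by hypothesis $(Q_{\ref{ineqcn}})$ we have $c_{n'}\geq \frac1c$ for all $n'$, i.e.\ $\frac1c\leq c_{n'}$. So for a fixed $n'$ we are comparing $\mathsf{J}_a$ with $a=\frac1c$ small and $\mathsf{J}_b$ with $b=c_{n'}$ larger, which is exactly the regime of Lemma~\ref{lemmaresolvineq} read in the direction $\norm{\mathsf{J}_b(p)-p}\leq 2\norm{\mathsf{J}_a(p)-p}$ — but that inequality alone does not give the scaling in $c_{n'}$ that the statement demands, so a sharper, parameter-sensitive estimate is needed.

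First I would record the precise quantitative estimate relating $\norm{\mathsf{J}_{n'}(p)-p}$ to $\norm{\mathsf{J}(p)-p}$: the resolvent identity (the first unnamed Lemma) gives, for $a=\frac1c$ and $b=c_{n'}$,
\[
\mathsf{J}(p)=\mathsf{J}_{1/c}(p)=\mathsf{J}_{c_{n'}}\!\left(\tfrac{c_{n'}}{1/c}\,p+\bigl(1-\tfrac{c_{n'}}{1/c}\bigr)\mathsf{J}(p)\right),
\]
and nonexpansiveness of $\mathsf{J}_{c_{n'}}$ combined with $\mathsf{J}_{c_{n'}}(\mathsf{J}_{n'}(p))=\mathsf{J}_{n'}(p)$ yields an inequality of the shape $\norm{\mathsf{J}_{n'}(p)-p}\leq (c\,c_{n'})\norm{\mathsf{J}(p)-p}$ (up to the exact constant, which the routine computation will pin down; the point is that the factor is linear in $c_{n'}$ and in $c$). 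Then, using the monotone majorant $\mathcal{C}$ with $c_{n'}\leq \mathcal{C}(n')\leq \mathcal{C}(n)$ for $n'\leq n$, one gets $\norm{\mathsf{J}_{n'}(p)-p}\leq c\,\mathcal{C}(n)\,\norm{\mathsf{J}(p)-p}$ uniformly for all $n'\leq n$.

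From there the conclusion is immediate arithmetic: if $\norm{\mathsf{J}(p)-p}\leq \frac{1}{\zeta(k,n)+1}=\frac{1}{\mathcal{C}(n)c(k+1)}$, then $\norm{\mathsf{J}_{n'}(p)-p}\leq c\,\mathcal{C}(n)\cdot\frac{1}{\mathcal{C}(n)c(k+1)}=\frac{1}{k+1}$ for every $n'\leq n$, which is exactly \eqref{zeta_monotone}. I would also check at the start that $\zeta$ as defined is monotone in both arguments — this follows since $\mathcal{C}$ is monotone and the expression $\mathcal{C}(n)c(k+1)-1$ is nondecreasing in $n$ and $k$ — and remark that the $\mathcal{C}(n)\geq 1$, $c\geq 1$ conventions ensure $\zeta(k,n)\geq 0$ so the fractions are well defined.

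The main obstacle is obtaining the \emph{sharp} scaling constant in the step $\norm{\mathsf{J}_{n'}(p)-p}\leq (\text{const}\cdot c_{n'})\norm{\mathsf{J}(p)-p}$: one must be careful because $\frac{c_{n'}}{1/c}=c\,c_{n'}$ can be larger than $1$, so the coefficient $1-\tfrac{c_{n'}}{1/c}$ is negative and the naive triangle-inequality bound picks up the factor $\bigl|1-c\,c_{n'}\bigr|+1$ rather than a clean multiple of $c_{n'}$; reconciling this with the exact form $\zeta(k,n)=\mathcal{C}(n)c(k+1)-1$ may require either invoking Lemma~\ref{lemmaresolvineq} to first pass through $\mathsf{J}_{c}$ (the resolvent with the integer parameter $c\geq \frac1c$... actually $c\geq 1\geq \frac1c$) as an intermediate step, or a slightly more careful direct manipulation of the resolvent identity. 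Everything else is bookkeeping with the monotone majorant and a one-line division.
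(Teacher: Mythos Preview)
Your plan is essentially the paper's proof: both arrive at the identity $\mathsf{J}_{n'}\bigl(p+(1-c\,c_{n'})e\bigr)=p+e$ with $e:=\mathsf{J}(p)-p$ (the paper derives it by unwinding the definition of the resolvent via $T$, you obtain it from the resolvent identity with $a=\tfrac1c$, $b=c_{n'}$), and then both use nonexpansiveness of $\mathsf{J}_{n'}$ and the triangle inequality to get $\norm{\mathsf{J}_{n'}(p)-p}\leq\bigl(1+\lvert 1-c\,c_{n'}\rvert\bigr)\norm{e}$.

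Two small corrections. First, the step ``$\mathsf{J}_{c_{n'}}(\mathsf{J}_{n'}(p))=\mathsf{J}_{n'}(p)$'' is false in general (resolvents are not idempotent); what you actually need is the equality $\mathsf{J}_{n'}\bigl(p+(1-c\,c_{n'})e\bigr)=\mathsf{J}(p)$ coming from the resolvent identity, and then compare $\mathsf{J}_{n'}(p)$ to this point. Second, the ``obstacle'' you flag disappears once you use $(Q_{\ref{ineqcn}})$: since $c_{n'}\geq \tfrac1c$ we have $c\,c_{n'}\geq 1$, hence $1+\lvert 1-c\,c_{n'}\rvert=1+(c\,c_{n'}-1)=c\,c_{n'}$ exactly, which together with $c_{n'}\leq \mathcal{C}(n')\leq \mathcal{C}(n)$ gives the clean bound $\norm{\mathsf{J}_{n'}(p)-p}\leq c\,\mathcal{C}(n)\norm{e}\leq \tfrac{1}{k+1}$ matching the definition of $\zeta$. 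No detour through Lemma~\ref{lemmaresolvineq} is needed.
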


\begin{proof}
	Let $e=\mathsf{J}(p)-p$ and assume $\|e\|\leq \dfrac{1}{\zeta(k,n)+1}$.	For $n'\leq n$, we have
	\begin{equation*}
	\begin{split}
	\mathsf{J}(p)=p+e &\leftrightarrow p\in p+e+\frac{1}{c}T(p+e) \\ &\leftrightarrow -e\cdot c\in T(p+e)\\
	&\leftrightarrow p+(1-c\cdot c_{n'})e \in p+e+c_{n'}T(p+e) \\& \leftrightarrow \mathsf{J}_{n'}(p+(1-c\cdot c_{n'})e)=p+e.
	\end{split}
	\end{equation*}
	Hence
	\begin{align*}
	\norm{\mathsf{J}_{n'}(p)-p}&\leq \norm{\mathsf{J}_{n'}(p)-\mathsf{J}_{n'}(p+(1-c \cdot c_{n'})e)}+\norm{\mathsf{J}_{n'}(p+(1-c \cdot c_{n'})e)-p}\\
	&\leq (1+|1-c\cdot c_{n'}|)\|e\|=c \cdot c_{n'}\|e\|\leq \frac{1}{k+1},
	\end{align*}

since $c_{n'} \leq \mathcal{C}(n)$.
\end{proof}

\subsection{Sequential weak compactness}\label{Sectionswc}

In the proof by Yao and Noor, sequential weak compactness, together with the demiclosedness principle, is used to show that $\limsup \, \langle \tilde{p}-u,\tilde{p}-z_m\rangle \leq 0$, where $\tilde{p}$ is the projection point of $u$ onto $S$ and $(z_n)$ is generated by \eqref{PPA}. This means that there exists $\tilde{p}\in S$  (the projection point) such that
\begin{equation}\label{weakcompactnessorig}
\forall k \in\N \,\exists n \in \N \,\forall m \geq n \left(\langle \tilde{p}-u,\tilde{p}-z_m\rangle \leq \frac{1}{k+1}\right).
\end{equation}
Without having access to the projection point, and working only with approximations, we switch from \eqref{weakcompactnessorig} to the following weakening 
\begin{equation*}
\forall k \in\N \,\exists p \in S \,\exists n \in \N \,\forall m \geq n \left(\langle p-u,p-z_m\rangle \leq \frac{1}{k+1}\right).
\end{equation*}
As it turns out, this weaker form is still enough to carry out the proof, avoiding the use of sequential weak compactness. This is similar to the arguments in the beginning of section~2 in \cite{FFLLPP(19)}.

\begin{proposition}[\cite{PP(ta)}]\label{projection3}
	 Let  $N\in \N\setminus \{0\}$ be a natural number satisfying $N\geq 2\|u-s\|$ for some point $s\in S$.
	 For any $k\in \N$ and monotone function $f:\N \to \N $, there are $n \leq 24N(\check{f}^{(R)}(0)+1)^2$ and $p\in {\rm B}_N$ such that
	 $$\|J(p)-p \| \leq \frac{1}{f(n)+1} \, \land \, \forall q\in B_N 
	 \left(\|J(q)-q\|\leq \frac{1}{n+1} \to \langle u-p,q-p\rangle \leq \frac{1}{k+1}\right),$$ 
	 with $R:=4N^4(k+1)^2$ and $\check{f}:=\max\{ f(24N(m+1)^2),\, 24N(m+1)^2 \}$.	
\end{proposition}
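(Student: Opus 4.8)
## Proof proposal for Proposition~\ref{projection3}

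The plan is to derive this from the earlier projection Proposition~\ref{projection} together with elementary identities relating the inner product $\langle u-p, q-p\rangle$ to a difference of squared norms. First I would recall the polarization-type identity valid in any real Hilbert space: for all $p, q, u \in H$,
\[
\|p-u\|^2 - \|q-u\|^2 = 2\langle u-p, q-p\rangle - \|q-p\|^2 .
\]
Thus $\langle u-p, q-p\rangle = \tfrac12\big(\|p-u\|^2 - \|q-u\|^2 + \|q-p\|^2\big)$. Since $p, q \in \mathrm{B}_N$ gives $\|q-p\|\leq 2N$, we have $\|q-p\|^2 \leq 4N^2$. The subtlety is that the conclusion of Proposition~\ref{projection} controls $\|p-u\|^2 - \|q-u\|^2$ only up to an error $\tfrac{1}{k'+1}$, and here we additionally carry the term $\tfrac12\|q-p\|^2$, which is not small. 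To absorb it, one must exploit that $q$ is an approximate fixed point of $\mathsf{J}$ and show that all sufficiently-good approximate fixed points in $\mathrm{B}_N$ are close to $p$ — i.e.\ approximate minimizers of $\|\cdot - u\|$ over the (almost) fixed-point set are close to each other. This is where the extra quadratic blow-up in $R$ (degree $4$ in $N$ and $2$ in $k$, versus the linear-in-$(k+1)$ exponent $r$ of Proposition~\ref{projection}) comes from.

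Concretely, I would proceed as follows. Fix $k$ and a monotone $f$. Apply Proposition~\ref{projection} with a sharpened error parameter $k'$ (roughly $k' \approx c_1 N^2 (k+1)$ for a small constant, to be pinned down) and with the monotone function $\check f$ in place of $f$, obtaining $n_0 \leq \check f^{(r')}(0)$ and $p \in \mathrm{B}_N$ with $\|\mathsf{J}(p)-p\|\leq \tfrac{1}{\check f(n_0)+1}$ and
\[
\forall q\in \mathrm{B}_N\ \Big(\|\mathsf{J}(q)-q\|\leq \tfrac{1}{n_0+1}\ \to\ \|p-u\|^2 \leq \|q-u\|^2 + \tfrac{1}{k'+1}\Big).
\]
Now set $n := 24N(n_0+1)^2$; note $n \geq n_0$, so the implication above still applies to any $q \in \mathrm{B}_N$ with $\|\mathsf{J}(q)-q\|\leq \tfrac{1}{n+1}$, and moreover $f(n_0) \leq \check f(n_0)$ (by definition of $\check f$), so $\|\mathsf{J}(p)-p\| \leq \tfrac{1}{f(n)+1}$ provided we also check $f(n)\leq \check f(n_0)$, which holds because $n = 24N(n_0+1)^2$ and $\check f(n_0) \geq f(24N(n_0+1)^2)$. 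The bound $n \leq 24N(\check f^{(R)}(0)+1)^2$ then follows from $n_0 \leq \check f^{(r')}(0) \leq \check f^{(R)}(0)$ once $r' \leq R = 4N^4(k+1)^2$ — this dictates the choice of $k'$, since $r' = N^2(k'+1)$ in Proposition~\ref{projection}, forcing $k'+1 \leq 4N^2(k+1)^2$, i.e.\ $k' \approx 4N^2(k+1)^2 - 1$.

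Finally, given any $q \in \mathrm{B}_N$ with $\|\mathsf{J}(q)-q\|\leq \tfrac{1}{n+1}$, I would bound $\langle u-p, q-p\rangle$. The honest way is the standard "swap the roles" trick: one also knows that $p$ itself is an approximate fixed point (of quality $\tfrac{1}{\check f(n_0)+1} \leq \tfrac{1}{n_0+1} \leq \tfrac{1}{n+1}$, after checking $\check f(n_0) \geq n_0$, which holds), so applying the minimality inequality in the other direction and combining, one gets that $\|p-q\|^2$ itself is bounded by something like $\tfrac{2}{k'+1}$ — i.e.\ $p$ and $q$ are close because they are both near-minimizers of a strictly convex function on a common approximate-fixed-point set, and strict convexity of $\|\cdot\|^2$ upgrades "comparable values" to "close points." Then
\[
\langle u-p, q-p\rangle = \tfrac12\big(\|p-u\|^2 - \|q-u\|^2\big) + \tfrac12\|q-p\|^2 \leq \tfrac12\cdot\tfrac{1}{k'+1} + \tfrac12\cdot\tfrac{2}{k'+1} = \tfrac{3}{2(k'+1)},
\]
and choosing the constant in $k'$ so that $\tfrac{3}{2(k'+1)} \leq \tfrac{1}{k+1}$ (which is compatible with $k'+1 \leq 4N^2(k+1)^2$ once $N\geq 1$, since then $4N^2(k+1)^2 \geq 2(k+1)$) finishes the argument. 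The main obstacle is precisely this last quantitative strict-convexity step: one must verify that near-equality of $\|p-u\|^2$ and $\|q-u\|^2$ for two near-minimizers forces $\|p-q\|^2$ small, with explicit constants, and then thread those constants back through the choice of $k'$ and $R$ so that everything — the bound on $n$, the approximate-fixed-point quality of $p$, and the final $\tfrac{1}{k+1}$ estimate — closes simultaneously. Monotonicity of $f$ and the explicit form of $\check f$ are what make all the needed inequalities ($f(n_0)\leq \check f(n_0)$, $f(n)\leq\check f(n_0)$, $\check f(n_0)\geq n_0$, $\check f(n_0)\geq n$) go through.
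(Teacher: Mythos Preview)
The paper does not prove Proposition~\ref{projection3}; it is quoted from \cite{PP(ta)} without proof, so there is no ``paper's own proof'' to compare against. That said, your proposal contains a genuine gap.

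Your overall architecture---apply Proposition~\ref{projection} with a sharpened accuracy $k'$ and the function $\check f$, then set $n:=24N(n_0+1)^2$ and check the bookkeeping inequalities---is the right shape, and your observations about monotonicity of $f$, $\check f(n_0)\geq f(24N(n_0+1)^2)$, and the matching of $r'=N^2(k'+1)$ with $R=4N^4(k+1)^2$ are on point. The problem is the ``swap the roles'' step. You assert that, since $p$ is itself an approximate fixed point, one can apply the minimality inequality ``in the other direction'' and conclude $\|p-q\|^2\lesssim \tfrac{2}{k'+1}$, because ``$p$ and $q$ are both near-minimizers''. But $q$ is \emph{not} a near-minimizer: Proposition~\ref{projection} produces one specific $p$ satisfying $\|p-u\|^2\leq \|q-u\|^2+\tfrac{1}{k'+1}$ for all approximate fixed points $q$; there is no reverse inequality $\|q-u\|^2\leq \|p-u\|^2+\tfrac{1}{k'+1}$ for an arbitrary approximate fixed point $q$. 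Indeed, any exact fixed point in ${\rm B}_N$ is a perfectly good $q$, and these can be far from $p$. So $\|p-q\|^2$ is certainly not small in general, and your final chain of inequalities collapses.

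The way out is not to compare $p$ with $q$ directly but with a convex combination $p_t=(1-t)p+tq$ for a carefully chosen small $t$. Expanding $\|p_t-u\|^2=\|p-u\|^2+2t\langle p-u,q-p\rangle+t^2\|q-p\|^2$ and using $\|p-u\|^2\leq \|p_t-u\|^2+\varepsilon$ yields
\[
\langle u-p,q-p\rangle \leq \tfrac{t}{2}\|q-p\|^2+\tfrac{\varepsilon}{2t}\leq 2tN^2+\tfrac{\varepsilon}{2t},
\]
and one then balances $t$ against $\varepsilon$. The real work is quantitative: one must show that $p_t$ is still a sufficiently good approximate fixed point of $\mathsf J$ (this is where nonexpansiveness/firm nonexpansiveness of the resolvent and the diameter bound $2N$ enter), and the resulting trade-offs produce precisely the $24N(\cdot+1)^2$ wrapping in $\check f$ and the quartic/quadratic exponent $R=4N^4(k+1)^2$. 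This convex-combination argument is the standard route from the squared-distance characterization of metric projections to the variational-inequality characterization, and its quantitative form is what \cite{PP(ta)} carries out.
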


In Proposition~\ref{lemmaintermediate}\eqref{limzn-Jc} below (under an additional assumption \eqref{hypchi} on a functional $\chi$) we compute a monotone functional $\xi_{\chi}$ satisfying
\begin{equation}\label{future_xi}
\forall k \in \N \,\mforall f:\N \to \N \,\exists n \leq \xi_{\chi}(k,f)\,\forall m \in [n,n+f(n)]\left(\norm{\mathsf{J}(z_m)-z_m}\leq \frac{1}{k+1} \right).
\end{equation}

Furthermore, in Lemma \ref{lemmamain1}, we show that the sequence $(z_n)$ is bounded and compute an explicit natural number $N_0$ such that $(z_n)\subset {\rm B}_{N_0}$. 

Proposition~\ref{removal_weakcompact} corresponds to the elimination of the sequential weak compactness argument. It can be seen as an application of the general principle in \cite[Proposition 4.3]{FFLLPP(19)} with $\alpha(k,f)=\xi_{\chi}(k,f+1)$, where the sequence being considered is $(z_{m+1})$, $\beta$ is given by Proposition \ref{projection3} and $\varphi(x,y)=\langle x-u, y\rangle$.
\begin{proposition}\label{removal_weakcompact}
		Let $\xi_{\chi}: \N\times \N^{\N} \to \N$ be a monotone function satisfying \eqref{future_xi}. For some $s \in S$, let $N_0 \in \N$ be  such that $(z_n)\subset {\rm B}_{N_0}$, and $N \geq \max\{2 \norm{u-s},N_0\}$. 
		For any $k\in \N$ and monotone function $f:\N \to \N$, there are $n\leq \psi_{\chi}(k,f)$ and $p\in {\rm B}_{N}$ such that $ \norm{\mathsf{J}(p)-p}\leq \frac{1}{f(n)+1}$  and 
	$$ \forall m\in [n,n+f(n)] \, \left(\langle p-u, p-z_{m+1}\rangle \leq \frac{1}{k+1}\right),$$
	where $\psi_{\chi}(k,f):=\xi_{\chi}(24N(\check{g}^{(R)}(0)+1)^2,f+1)$ with $R:= N^4(k+1)^2$, $\check{g}(m):=\max\{g(24N(m+1)^2), 24N(m+1)^2\}$ and $g(m):=f\left(\xi_{\chi}(m,f+1)\right)$.
\end{proposition}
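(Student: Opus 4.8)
The plan is to instantiate the general elimination-of-sequential-weak-compactness principle from \cite[Proposition~4.3]{FFLLPP(19)}, just as announced in the paragraph preceding the statement, and to check that the bound produced by that instantiation simplifies to the closed-form expression $\psi_\chi$ claimed. Concretely, I would apply that principle with the sequence taken to be $(z_{m+1})$ (this is why the conclusion speaks of $z_{m+1}$ rather than $z_m$), with the bilinear functional $\varphi(x,y):=\langle x-u,\,y\rangle$, with $\alpha(k,f):=\xi_\chi(k,f+1)$ playing the role of a rate of metastability for $\norm{\mathsf{J}(z_m)-z_m}\to 0$ along $[n,n+f(n)]$ — this is exactly property \eqref{future_xi}, with the shift by $1$ in the second argument accounting for the index shift from $(z_n)$ to $(z_{m+1})$ — and with $\beta$ supplied by Proposition~\ref{projection3}, which provides, for each $k$ and monotone $f$, a point $p\in{\rm B}_N$ that is an $\frac{1}{f(n)+1}$-almost-fixed point of $\mathsf{J}$ and satisfies $\langle u-p,q-p\rangle\le\frac{1}{k+1}$ for all genuine approximate fixed points $q\in{\rm B}_N$, with bound $24N(\check f^{(R)}(0)+1)^2$ and $R=4N^4(k+1)^2$, $\check f(m)=\max\{f(24N(m+1)^2),24N(m+1)^2\}$.

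The key steps, in order, are: (i) fix $k$ and monotone $f$ and set up the data above, checking that $N\ge\max\{2\norm{u-s},N_0\}$ guarantees both $(z_n)\subset{\rm B}_N$ (needed so that the $z_{m+1}$ appearing in $\varphi$ lie in the ball where $\beta$ operates) and $N\ge 2\norm{u-s}$ (the hypothesis of Proposition~\ref{projection3}); (ii) feed these into the general principle, which outputs some $n\le\alpha\bigl(\beta(k,g_0),\,\cdot\,\bigr)$ for an appropriate auxiliary monotone function $g_0$ built from $\beta$, $\varphi$ and $f$, together with a witness $p\in{\rm B}_N$ that is an approximate fixed point of $\mathsf{J}$ to precision $\frac{1}{f(n)+1}$ and for which $\langle p-u,p-z_{m+1}\rangle\le\frac{1}{k+1}$ on the whole block $m\in[n,n+f(n)]$; (iii) unwind the definitions of the auxiliary functions: $\beta(k,g)=24N(\check g^{(R)}(0)+1)^2$ with $R=N^4(k+1)^2$ after replacing $4N^4(k+1)^2$ by $N^4(k+1)^2$ via the substitution $k\mapsto 2k+1$ absorbed into the constant (or more precisely matching the constants as written), $g(m):=f(\xi_\chi(m,f+1))$ coming from composing the metastability rate $\alpha(\cdot,f)=\xi_\chi(\cdot,f+1)$ with $f$, and $\check g(m):=\max\{g(24N(m+1)^2),24N(m+1)^2\}$; (iv) conclude that the resulting bound on $n$ is exactly $\xi_\chi\bigl(24N(\check g^{(R)}(0)+1)^2,\,f+1\bigr)=\psi_\chi(k,f)$, and that monotonicity of $\psi_\chi$ follows from monotonicity of $\xi_\chi$ and of all the arithmetic operations involved.

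The main obstacle I anticipate is purely bookkeeping rather than conceptual: one must carefully track how the auxiliary monotone functions demanded by \cite[Proposition~4.3]{FFLLPP(19)} are constructed from $\beta$, $\varphi$, and $f$, and verify that the various "$+1$" shifts, the iteration counts $R$, and the polynomial factors $24N(\,\cdot\,+1)^2$ line up precisely with what is written in the statement of $\psi_\chi$, $\check g$, $g$. In particular, the precision with which $p$ must be an approximate fixed point (the $\frac{1}{f(n)+1}$ bound) has to be threaded through so that the second argument of $\xi_\chi$ is $f+1$ and not $f$, and the value $24N(\check g^{(R)}(0)+1)^2$ entering the first argument of $\xi_\chi$ must be shown to dominate the index where the projection witness $p$ is produced by Proposition~\ref{projection3}. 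A secondary point of care is that $\varphi(x,y)=\langle x-u,y\rangle$ is bilinear but not bounded globally; this is handled exactly as in \cite{FFLLPP(19)} by restricting to the ball ${\rm B}_N$, where $\norm{x-u}$ and $\norm{y}$ are bounded by $2N$, so the modulus of uniform continuity of $\varphi$ on ${\rm B}_N\times{\rm B}_N$ is linear and contributes only the harmless factor $24N$ (coming from the $2N+2N$-type bound together with the rounding to rationals). Once these identifications are made, no genuine computation remains.
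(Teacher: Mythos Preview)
Your plan is correct and matches exactly the instantiation the paper announces in the paragraph preceding the proposition. The paper's own proof, however, does not invoke \cite[Proposition~4.3]{FFLLPP(19)} as a black box but simply writes out the two-step argument directly: first apply Proposition~\ref{projection3} to $k$ and the function $g(m)=f(\xi_\chi(m,f+1))$ to obtain $n'\le 24N(\check g^{(R)}(0)+1)^2$ and $p\in{\rm B}_N$; then apply \eqref{future_xi} with precision $n'$ and counterfunction $f+1$ to get $n\le\xi_\chi(n',f+1)\le\psi_\chi(k,f)$, after which both conclusions follow from $(z_n)\subset{\rm B}_N$ and the monotonicity of $f$ (giving $\|\mathsf{J}(p)-p\|\le\frac{1}{g(n')+1}\le\frac{1}{f(n)+1}$). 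This is precisely the unwinding you describe, so there is no extra computation hidden behind the general principle.

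One correction to your bookkeeping: your attempted explanation of the mismatch between $R=N^4(k+1)^2$ in the statement and $R=4N^4(k+1)^2$ in Proposition~\ref{projection3} via a substitution $k\mapsto 2k+1$ does not work arithmetically (that would produce $16N^4(k+1)^2$). The proof applies Proposition~\ref{projection3} to $k$ itself with no shift, so this is simply a typographical discrepancy in the paper rather than something to be derived. Likewise, you need not speculate about the origin of the factor $24N$ from a modulus of uniform continuity of $\varphi$: in the direct argument it is inherited verbatim from the bound in Proposition~\ref{projection3}.
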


\begin{proof}
	Let $k\in \N$ and a monotone function $f$ be given. By Proposition \ref{projection3} applied to $k$ and the function $g$, we get $n'\leq 24N\left(\check{g}^{(R)}(0)+1\right)^2$ and $p\in {\rm B}_{N}$  such that $\norm{\mathsf{J}(p)-p}\leq\frac{1}{g(n')+1}$ and
\begin{equation}\label{app_to g}
\forall q\in {\rm B}_{N} \, \left(\norm{\mathsf{J}(q)-q}\leq \frac{1}{n'+1}\to \langle p-u,p-q\rangle \leq \frac{1}{k+1}\right).
\end{equation}
	By \eqref{future_xi}, there is $n\leq \xi_{\chi}(n',f+1)\leq \psi_{\chi}(k,f)$ such that
	$$\forall m\in[n,n+f(n)+1]\, \left( \norm{\mathsf{J}(z_m)-z_m}\leq \frac{1}{n'+1}\right).$$
	Hence, $\forall m\in[n, n+f(n)] \, \left( \norm{\mathsf{J}(z_{m+1})-z_{m+1}}\leq \frac{1}{n'+1}\right)$. Since $(z_n)\subset {\rm B}_{N}$, by \eqref{app_to g} we conclude that
	$$\forall m\in [n,n+f(n)]\, \left(\langle p-u, p-z_{m+1}\rangle \leq \frac{1}{k+1}\right).$$
	Finally, by the monotonicity of the function $f$,
\begin{equation*}
\norm{\mathsf{J}(p)-p}\leq \frac{1}{g(n')+1}=\frac{1}{f(\xi_{\chi}(n',f+1))+1}\leq \frac{1}{f(n)+1}.\qedhere
\end{equation*}
\end{proof}

As mentioned in Subsection~\ref{Sectionoriginalproof}, the final step of the proof of Theorem~\ref{ThmYaoNoor} is an application of Lemma~\ref{LemmaXu}. There  $s_n=\norm{z_n -p}$, with $p$ the projection point of $u$ onto $S$. However, using approximations  to the projection point instead of the projection point itself, the inequality \eqref{xu_lem_main_ass} only holds with $s_n+v_n$ in place of $s_n$, for $(v_n)$ a certain sequence of errors.
The following result from \cite{PP(ta)} corresponds to a quantitative version of this statement.

\begin{lemma}[\cite{PP(ta)}]\label{lemmaqtXu1}
Let $(s_n)$ be a bounded sequence of non-negative real numbers and $D\in\N$ a positive upper bound on $(s_n)$. Consider sequences of real numbers $(\lambda_n)\subset (0,1)$, $(r_n)$, $(v_n)$ and $(\gamma_n)\subset [0, +\infty)$ and assume the existence of a monotone function $ L$ satisfying  $\sum_{i=1}^{L(k)}\lambda_i \geq k$, for all $k \in \N$. For natural numbers $k, n$ and $p$ assume
\begin{enumerate}[$(i)$]
\item $\forall m\in[n,p]\, \left(v_m\leq \frac{1}{4(k+1)(p+1)}\land r_m\leq \frac{1}{4(k+1)}\right)$.
\item $\forall m\in\N\, \left(\sum_{i=n}^{n+m}\gamma_i\leq \frac{1}{4(k+1)}\right)$.
\item $\forall m\in\N (s_{m+1}\leq (1-\lambda_m)(s_m+v_m)+\lambda_mr_m+\gamma_m).$
\end{enumerate}
Then
\[\forall m\in[\sigma(k,n),p]\, \left(s_m\leq \frac{1}{k+1}\right),\]
with $\sigma(k,n):=L\left(n+\lceil \ln(4D(k+1))\rceil\right)+1$.
\end{lemma}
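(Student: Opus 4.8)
The plan is to iterate the recursive inequality $(iii)$ and to bound separately the four contributions that show up: the ``decaying initial value'' term coming from $s_n$, and the three error-type terms coming from $v_m$, $r_m$ and $\gamma_m$. Concretely, a straightforward induction on $m\ge n$ based on $(iii)$ (splitting $(1-\lambda_m)(s_m+v_m)=(1-\lambda_m)s_m+(1-\lambda_m)v_m$ and expanding) yields, for every $m\ge n$,
\[
s_m \;\le\; \Big(\prod_{i=n}^{m-1}(1-\lambda_i)\Big)\, s_n \;+\; \sum_{j=n}^{m-1}\Big(\prod_{i=j+1}^{m-1}(1-\lambda_i)\Big)\big((1-\lambda_j)v_j+\lambda_j r_j+\gamma_j\big),
\]
which is meaningful since $\lambda_i\in(0,1)$ makes all factors $1-\lambda_i$ positive. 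I would then show that, for $m\in[\sigma(k,n),p]$, each of the four pieces is at most $\tfrac{1}{4(k+1)}$, so their sum is at most $\tfrac{1}{k+1}$, which is exactly the claim.

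For the three error pieces this is bookkeeping against the ``budgets'' fixed in $(i)$ and $(ii)$. Since $m\le p$, all indices $j\in[n,m-1]$ lie in $[n,p]$, so $(i)$ applies to them. The $v$-piece: there are at most $p+1$ summands, each product of $(1-\lambda_i)$'s is $\le 1$, and $v_j\le\tfrac{1}{4(k+1)(p+1)}$, so this piece is $\le\tfrac{1}{4(k+1)}$. The $\lambda r$-piece: using $r_j\le\tfrac{1}{4(k+1)}$ together with the classical telescoping identity $\sum_{j=n}^{m-1}\lambda_j\prod_{i=j+1}^{m-1}(1-\lambda_i)=1-\prod_{i=n}^{m-1}(1-\lambda_i)\le 1$ gives the bound $\tfrac{1}{4(k+1)}$. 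The $\gamma$-piece: bounding the products by $1$ and invoking $(ii)$ gives $\sum_{j=n}^{m-1}\gamma_j\le\tfrac{1}{4(k+1)}$.

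The remaining and genuinely delicate point is the initial-value piece $\prod_{i=n}^{m-1}(1-\lambda_i)\,s_n\le D\exp\!\big(-\sum_{i=n}^{m-1}\lambda_i\big)$ (using $1-x\le e^{-x}$ and $s_n\le D$): one must show that $m\ge\sigma(k,n)=L(n+\ell)+1$, with $\ell:=\lceil\ln(4D(k+1))\rceil$, forces $\sum_{i=n}^{m-1}\lambda_i\ge\ell\ge\ln(4D(k+1))$, whence $D\exp(-\sum_{i=n}^{m-1}\lambda_i)\le\tfrac{1}{4(k+1)}$. Here I would combine the defining property of $L$ with $\lambda_i<1$: since $m-1\ge L(n+\ell)$ and the $\lambda_i$ are nonnegative, $\sum_{i=n}^{m-1}\lambda_i\ge\sum_{i=n}^{L(n+\ell)}\lambda_i=\sum_{i=1}^{L(n+\ell)}\lambda_i-\sum_{i=1}^{n-1}\lambda_i\ge(n+\ell)-(n-1)=\ell+1$, because $\sum_{i=1}^{n-1}\lambda_i<n-1$. (Note in passing that $\lambda_i<1$ also forces $L(k)\ge k+1$, hence $\sigma(k,n)>n$, so the interval $[\sigma(k,n),p]$ only involves indices $m>n$, where the telescoped inequality is nonvacuous.) I expect the main obstacle to be precisely this calibration of the threshold $\sigma$ — matching the logarithmic number of ``active'' weights $\lambda_i$ needed for the geometric factor to absorb the bound $D$ against the counting function $L$ — while everything else reduces to routine estimates inside the three $\tfrac14$-sized error budgets supplied by $(i)$ and $(ii)$.
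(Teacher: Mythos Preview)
The paper does not give its own proof of this lemma; it is quoted verbatim from the reference \cite{PP(ta)}, so there is no in-paper argument to compare against. Your approach --- telescoping the recursion, bounding the three error contributions separately against the budgets in $(i)$ and $(ii)$, and using $1-x\le e^{-x}$ together with the divergence rate $L$ to kill the initial-value term --- is the standard way to establish such quantitative Xu-type estimates, and it is correct.

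One tiny cosmetic point: the strict inequality $\sum_{i=1}^{n-1}\lambda_i<n-1$ fails as written when $n\le 1$ (for $n=1$ the left side is an empty sum equal to $0$). But all you actually need is $\sum_{i=1}^{n-1}\lambda_i\le n-1$, which holds for every $n\ge 1$, and the case $n=0$ is immediate since then $\sum_{i=0}^{m-1}\lambda_i\ge\sum_{i=1}^{L(\ell)}\lambda_i\ge\ell$. Either way you obtain $\sum_{i=n}^{m-1}\lambda_i\ge\ell\ge\ln(4D(k+1))$, which is exactly what is required for the initial-value piece.
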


A direct application of Lemma~\ref{lemmaqtXu1} gives the following result which is more suitable for our analysis.

\begin{lemma}\label{lemmaqtXu2}
Let $\Omega$ be a bounded subset of $H$. Let $(\lambda_n)\subset (0,1)$ be given and, for each $p\in\Omega$, consider the sequences of real numbers $(s_{n,p})$, $(v_{n,p})$, $(r_{n,p})$ and $(\gamma_{n,p})$ with $(s_{n,p})$, $(\gamma_{n,p}) \subset [0, +\infty)$ and such that, for all $p\in\Omega$,
\begin{equation*}
\forall m\in\N\, \left(s_{m+1,p}\leq (1-\lambda_m)(s_{m,p}+v_{m,p})+\lambda_mr_{m,p}+\gamma_{m,p}\right). 
\end{equation*}
For a natural number $D\in\N$ and monotone functions $ L$, $ G:\N\to\N$ and $\Psi:\N\times \N^{\N}\to\N$, assume that:
\begin{enumerate}[$(i)$]
\item $ L
$ is a rate of divergence for $\left(\sum\lambda_n\right)$, i.e.\  $\forall k \in \N \left(\sum_{i=1}^{L(k)}\lambda_i \geq k\right)$.
\item For all $p\in\Omega$, $D$ is a positive upper bound on $(s_{n,p})$.
\item For all $p\in\Omega$, $ G$ is a Cauchy rate for $\left(\sum\gamma_{n,p}\right)$, i.e.\ $\forall k,n \in \N \left(\sum_{i=G(k)+1}^{G(k)+n} \gamma_{i,p} \leq \frac{1}{k+1}\right) $.
\item $\forall k\in\N \,\mforall f:\N\to\N\,\exists p\in\Omega\,\exists n\leq \Psi(k,f)\,\forall m\in[n,f(n)]\, \left( v_{m,p}\leq \frac{1}{f(n)+1} \land r_{m,p}\leq \frac{1}{k+1}\right)$.
\end{enumerate}
Then, for any natural number $k$ and monotone function $f:\N\to\N$, there are $p\in\Omega$ and $n\leq \Theta(k,f)$ such that
\[\forall m\in [n,f(n)]\, \left( s_{m,p}\leq \frac{1}{k+1}\right),\]
where $\Theta(k,f)=\Theta(k,f,L,\Psi, G, D):= L\left(h(\Psi(4k+3,g))\right)+1$ with
$h(m)=\max\{m, G(4k+3)+1\}+\lceil \ln(4D(k+1))\rceil$ and $g(m):=4(k+1)\left(f(L(h(m))+1)+1\right)$.
\end{lemma}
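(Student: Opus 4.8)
The plan is to derive Lemma~\ref{lemmaqtXu2} as a direct application of Lemma~\ref{lemmaqtXu1}, by choosing, for each given $k$ and monotone $f$, an appropriate point $p\in\Omega$ and window $[n,p']$ so that the three hypotheses $(i)$--$(iii)$ of Lemma~\ref{lemmaqtXu1} are met with the error bound $\frac{1}{k'+1}$ for a suitably inflated precision $k' = 4k+3$. The slack $4k+3$ is exactly what is needed to absorb the $\frac14$ factors appearing in hypotheses $(i)$ and $(ii)$ of Lemma~\ref{lemmaqtXu1}: if one wants $s_m\leq\frac{1}{k+1}$, it suffices by that lemma to have the errors controlled by $\frac{1}{4(k+1)(p'+1)}$ and $\frac{1}{4(k+1)}$, and a single parameter $k'$ with $\frac{1}{k'+1}\leq\frac{1}{4(k+1)}$ does the job; the choice $k'=4k+3$ gives $k'+1=4(k+1)$.

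The key steps, in order, would be the following. First, fix $k$ and monotone $f$, and set $k' := 4k+3$. Second, apply hypothesis $(iv)$ to the precision $k'$ and to a carefully chosen monotone function $g$ — here $g$ must be large enough that the witnessing window $[n,f(n)]$ delivered by $(iv)$ already covers the window $[\sigma(k',n),p']$ that Lemma~\ref{lemmaqtXu1} will eventually require; tracing backwards through $\sigma(k',n)=L(n+\lceil\ln(4D(k'+1))\rceil)+1$ and the fact that we want the final conclusion on $[\,\cdot\,,f(\,\cdot\,)]$, this forces $g(m)=4(k+1)(f(L(h(m))+1)+1)$ with $h$ as in the statement. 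From $(iv)$ we obtain $p\in\Omega$ and $n_0\leq\Psi(k',g)$ with $v_{m,p}\leq\frac{1}{g(n_0)+1}$ and $r_{m,p}\leq\frac{1}{k'+1}$ on $[n_0,g(n_0)]$. Third, set $n := \max\{n_0,\,G(k')+1\}$; by hypothesis $(iii)$ and monotonicity this is still $\leq h(\Psi(k',g))$ up to the additive logarithmic term, and at this shifted starting point hypothesis $(ii)$ of Lemma~\ref{lemmaqtXu1} holds because $G$ is a Cauchy rate for $\sum\gamma_{n,p}$. Fourth, set $p' := L(h(\Psi(k',g)))+1$ — this is the final bound $\Theta(k,f)$ minus nothing, indeed $\Theta(k,f)=L(h(\Psi(4k+3,g)))+1$ — and check that on $[n,p']$ hypothesis $(i)$ of Lemma~\ref{lemmaqtXu1} holds: $r_{m,p}\leq\frac{1}{k'+1}=\frac{1}{4(k+1)}$ is immediate, and $v_{m,p}\leq\frac{1}{g(n_0)+1}\leq\frac{1}{4(k+1)(f(L(h(n_0))+1)+1)}\leq\frac{1}{4(k+1)(p'+1)}$ follows from the definition of $g$ together with monotonicity of $L$, $h$ and $f$ and the fact that $n_0\leq\Psi(k',g)$. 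Fifth, invoke Lemma~\ref{lemmaqtXu1} with these data to get $s_{m,p}\leq\frac{1}{k'+1}\leq\frac{1}{k+1}$ for all $m\in[\sigma(k',n),p']$; finally observe that, by the choice of $g$, the desired window $[n,f(n)]$ is contained in $[\sigma(k',n),p']$ — here $n$ plays the role of the witness in the conclusion, renamed appropriately — so that the conclusion $\forall m\in[n,f(n)]\,(s_{m,p}\leq\frac{1}{k+1})$ follows, with $n\leq\Theta(k,f)$.

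The main obstacle I expect is the bookkeeping in step four: verifying that the window $[n,f(n)]$ on which $(iv)$ gives control of $v$ and $r$ is wide enough to contain the window $[\sigma(k',n),p']$ on which Lemma~\ref{lemmaqtXu1} produces the conclusion, \emph{and simultaneously} that the precision $\frac{1}{g(n_0)+1}$ on $v$ is fine enough to beat $\frac{1}{4(k+1)(p'+1)}$ with $p'$ itself depending on $\Psi(k',g)$ and hence on $g$. This is the usual fixed-point-flavoured circularity in such metastable arguments, and it is resolved precisely by the layered definition $g(m)=4(k+1)(f(L(h(m))+1)+1)$ together with $h(m)=\max\{m,G(4k+3)+1\}+\lceil\ln(4D(k+1))\rceil$: one checks that $L(h(n_0))+1 \geq p' $ is false in general but that the relevant inequality $f(L(h(n_0))+1)+1 \geq p'+1 $ does hold because $p' = L(h(\Psi(k',g)))+1 \leq L(h(n_0))+1$ would require $\Psi(k',g)\leq n_0$, which is exactly what $(iv)$ guarantees via $n_0\leq\Psi(k',g)$ — so one must be careful about the direction of these inequalities and about where monotonicity of $L$ and $h$ is used. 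All remaining verifications — monotonicity of $\Theta$ as a functional, and the arithmetic of the $\frac14$ factors — are routine. The use of hypothesis $(ii)$ (boundedness of $(s_{n,p})$ by $D$, uniformly in $p$) enters only through the $\lceil\ln(4D(k+1))\rceil$ term inherited from $\sigma$ in Lemma~\ref{lemmaqtXu1}, and hypothesis $(i)$ (the rate of divergence $L$) enters through the outer $L(\cdot)+1$.
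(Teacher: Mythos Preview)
Your overall strategy matches the paper's: apply hypothesis $(iv)$ with precision $4k+3$ and the function $g$, shift the starting point past $G(4k+3)+1$ to control the $\gamma$-tail, and then invoke Lemma~\ref{lemmaqtXu1}. But the bookkeeping in your step four is genuinely wrong, and your ``obstacle'' paragraph does not resolve the issue --- it merely restates it and then asserts the opposite.

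The error is in the choice of the upper endpoint. You set $p':=\Theta(k,f)=L(h(\Psi(4k+3,g)))+1$, the \emph{global} bound, and then need $v_{m,p}\leq\frac{1}{4(k+1)(p'+1)}$. From $(iv)$ you only know $v_{m,p}\leq\frac{1}{g(n_0)+1}$ with $g(n_0)=4(k+1)\bigl(f(L(h(n_0))+1)+1\bigr)$. For this to suffice you would need $f(L(h(n_0))+1)\geq p'$, hence essentially $L(h(n_0))+1\geq L(h(\Psi(4k+3,g)))+1$, i.e.\ $n_0\geq\Psi(4k+3,g)$. But $(iv)$ gives only $n_0\leq\Psi(4k+3,g)$ --- the opposite direction. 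Your own paragraph says precisely this and then writes ``which is exactly what $(iv)$ guarantees via $n_0\leq\Psi(k',g)$'', but that is backwards: $(iv)$ guarantees the inequality you \emph{don't} want.

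The correct move, and what the paper does, is to let the upper endpoint depend on the \emph{actual} witness $n_0$, not on the bound $\Psi(4k+3,g)$. With $n_2:=\max\{n_0,G(4k+3)+1\}$ one has $h(n_0)=n_2+\lceil\ln(4D(k+1))\rceil$, so $\sigma(k,n_2)=L(h(n_0))+1$ (note: Lemma~\ref{lemmaqtXu1} is applied with the original $k$, not $k'=4k+3$; the term $\lceil\ln(4D(k+1))\rceil$ in $h$ is tailored to $\sigma(k,\cdot)$). One then takes the upper endpoint to be $p':=f(\sigma(k,n_2))$. With this choice $g(n_0)=4(k+1)(p'+1)$ exactly, the hypotheses of Lemma~\ref{lemmaqtXu1} hold on $[n_2,p']$, and its conclusion gives $s_{m,p}\leq\frac{1}{k+1}$ on $[\sigma(k,n_2),\,f(\sigma(k,n_2))]$. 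The final witness is therefore $\sigma(k,n_2)$ (not your $n$), and the inequality $n_0\leq\Psi(4k+3,g)$ is used --- in the right direction --- only at the very end, to bound $\sigma(k,n_2)=L(h(n_0))+1\leq L(h(\Psi(4k+3,g)))+1=\Theta(k,f)$ via monotonicity of $L$ and $h$.
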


\begin{proof}
Let $k\in\N$ and a monotone function $f$ be given. By condition $(iv)$, consider $p_0\in\Omega$ and $n_1\leq \Psi(4k+3,g)$ such that for $m\in [n_1,g(n_1)]$
\[v_{m,p_0}\leq \frac{1}{g(n_1)+1}\, \text{ and } \, r_{m,p_0}\leq \frac{1}{4(k+1)}.\]
Define $n_2:=\max\{n_1,  G(4k+3)+1\}$. By condition $(iii)$, for all $m\in\N$, $\sum_{i=n_2}^{n_2+m}\gamma_{m,p_0}\leq \frac{1}{4(k+1)}$.
We have $n_1\leq n_2$ and $$g(n_1)\geq f( L(h(n_1))+1)=f(\sigma(k,n_2)),$$ where $\sigma$ is as in Lemma~\ref{lemmaqtXu1}. Hence, for $m\in [n_2, f(\sigma(k,n_2))]$,
\[v_{m,p_0}\leq \frac{1}{4(k+1)(f(\sigma(k,n_2))+1)} \, \text{ and } \, r_{m,p_0}\leq \frac{1}{4(k+1)}.\]
We are in the conditions of Lemma~\ref{lemmaqtXu1} with $n=n_2$ and $p=f(\sigma(k,n_2))$, and so
\[\forall m\in[\sigma(k,n_2),f(\sigma(k,n_2))]\, \left(s_{m,p_0}\leq \frac{1}{k+1}\right).\]
Noticing that, by the monotonicity of $ L$, we have $\sigma(k,n_2)\leq \Theta(k,f)$, we conclude the proof.
\end{proof}

We recall that the last step in the proof by Yao and Noor is an application of Lemma~\ref{LemmaXu}. Similarly, in our quantitative analysis, the final step to prove metastability for \eqref{PPA} is an application of Lemma~\ref{lemmaqtXu2}. As such, we need to verify each of the conditions of that result (for a specific choice of parameters). Conditions $(i)$ and $(ii)$ are easy to check. The existence of a function $G$ as in condition $(iii)$ follows from a quantitative version of $(H_{\ref{H6}})$. The next result ensures that condition $(iv)$ holds.

\begin{lemma}\label{psi}
Assume the conditions of Proposition~\ref{removal_weakcompact}, and that there exist $c\in \N\setminus\{0\}$ satisfying $(Q_{\ref{ineqcn}})$ and $\mathcal{C}:\N \to \N$ a monotone function such that $\forall n \in \N (c_n \leq \mathcal{C}(n))$. For any $k\in\N$ and monotone function $f:\N\to\N$, there are $p\in {\rm B}_N$ and $n\leq \Psi_{\chi}(k,f)$ such that for all $m\in[n,f(n)]$,
$$v_{m,p}\leq \frac{1}{f(n)+1} \land r_{m,p}\leq \frac{1}{k+1},$$
where  $v_{m,p}=\norm{\mathsf{J}_{m}(p)-p}\left(\norm{\mathsf{J}_{m}(p)-p}+2\norm{z_m-p} \right)$, $r_{m,p}=2 \langle u-p,z_{m+1}-p\rangle$,  $\Psi_{\chi}(k,f):=\psi_{\chi}(2k+1, h)$, where $\psi_{\chi}$ is the function defined in Proposition~\ref{removal_weakcompact} and $h:\N\to\N$ is the monotone function defined by $$h(m):=\zeta\left( (1+4N)(f(m)+1)-1, f(m)\right),$$ with $\zeta$ as in Lemma~\ref{samefix-pt-set}.
\end{lemma}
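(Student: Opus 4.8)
The plan is to obtain $p$ and $n$ from a single application of Proposition~\ref{removal_weakcompact} to the shifted accuracy $2k+1$ and the auxiliary function $h$, and then to read off the two required inequalities separately: the bound on $r_{m,p}$ directly from the inner-product conclusion, and the bound on $v_{m,p}$ by feeding the ``almost fixed point'' estimate for $\mathsf{J}$ into Lemma~\ref{samefix-pt-set}. First I would check that $h$ is well defined and monotone: $\zeta(k,n)=\mathcal{C}(n)c(k+1)-1$ is a natural number (since $\mathcal{C}(n)\ge 1$ as $c_n>0$, and $c\ge 1$) and is non-decreasing in both arguments, and $f$ is monotone, so $m\mapsto\zeta\bigl((1+4N)(f(m)+1)-1,f(m)\bigr)$ is monotone. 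Applying Proposition~\ref{removal_weakcompact} with $k$ replaced by $2k+1$ and $f$ replaced by $h$ then yields $n\le\psi_{\chi}(2k+1,h)=\Psi_{\chi}(k,f)$ and $p\in{\rm B}_N$ with
\[
\norm{\mathsf{J}(p)-p}\le\frac{1}{h(n)+1}\qquad\text{and}\qquad\forall m\in[n,n+h(n)]\ \Bigl(\langle p-u,p-z_{m+1}\rangle\le\tfrac{1}{2(k+1)}\Bigr).
\]

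For the bound on $r_{m,p}$: unfolding $\zeta$ and using $\mathcal{C}(f(n))\ge 1$, $c\ge 1$, $1+4N\ge 1$ gives $h(n)\ge f(n)$, hence $[n,f(n)]\subseteq[n,n+h(n)]$; then for $m\in[n,f(n)]$, since $\langle u-p,z_{m+1}-p\rangle=\langle p-u,p-z_{m+1}\rangle$, we get $r_{m,p}=2\langle u-p,z_{m+1}-p\rangle\le\tfrac{1}{k+1}$. For the bound on $v_{m,p}$: apply Lemma~\ref{samefix-pt-set} with parameter $(1+4N)(f(n)+1)-1$ and index $f(n)$; by the very definition of $h$ one has $\zeta\bigl((1+4N)(f(n)+1)-1,f(n)\bigr)=h(n)$, so the estimate $\norm{\mathsf{J}(p)-p}\le\frac{1}{h(n)+1}$ gives $\norm{\mathsf{J}_{m}(p)-p}\le\frac{1}{(1+4N)(f(n)+1)}$ for every $m\le f(n)$. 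Since $p\in{\rm B}_N$ and $(z_m)\subset{\rm B}_{N_0}\subseteq{\rm B}_N$, the triangle inequality gives $\norm{z_m-p}\le 2N$, and as $\norm{\mathsf{J}_m(p)-p}\le 1$ we obtain $\norm{\mathsf{J}_m(p)-p}+2\norm{z_m-p}\le 1+4N$; multiplying,
\[
v_{m,p}=\norm{\mathsf{J}_m(p)-p}\bigl(\norm{\mathsf{J}_m(p)-p}+2\norm{z_m-p}\bigr)\le\frac{1+4N}{(1+4N)(f(n)+1)}=\frac{1}{f(n)+1}
\]
for $m\in[n,f(n)]$, which is what is claimed.

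I do not expect any genuine obstacle here: the lemma is the bookkeeping step that repackages Proposition~\ref{removal_weakcompact} (the elimination of weak compactness and of the projection) together with Lemma~\ref{samefix-pt-set} (the transfer between the resolvents $\mathsf{J}$ and $\mathsf{J}_m$) into precisely condition~$(iv)$ of Lemma~\ref{lemmaqtXu2}. The only points needing attention are the choice of the inflated parameter $(1+4N)(f(m)+1)$ built into $h$ --- it is engineered so that \emph{simultaneously} the window $[n,n+h(n)]$ delivered by Proposition~\ref{removal_weakcompact} covers the window $[n,f(n)]$ needed here, and the crude diameter bound $\norm{z_m-p}\le 2N$ together with the quadratic shape of $v_{m,p}$ gets absorbed while still reaching the target accuracy $\tfrac{1}{f(n)+1}$ --- together with the routine sign rewriting $\langle u-p,z_{m+1}-p\rangle=\langle p-u,p-z_{m+1}\rangle$ that connects $r_{m,p}$ to the inner product appearing in Proposition~\ref{removal_weakcompact}.
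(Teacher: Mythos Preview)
Your proof is correct and follows essentially the same route as the paper's own argument: apply Proposition~\ref{removal_weakcompact} with $2k+1$ and $h$, derive the bound on $r_{m,p}$ from the inner-product conclusion (using $[n,f(n)]\subseteq[n,n+h(n)]$), and derive the bound on $v_{m,p}$ by feeding the $\mathsf{J}$-estimate through Lemma~\ref{samefix-pt-set} and the crude $\norm{z_m-p}\le 2N$. If anything, you are slightly more explicit than the paper in justifying the monotonicity of $h$ and the inequality $h(n)\ge f(n)$ needed for the interval inclusion.
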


\begin{proof}
Let $k\in\N$ and monotone $f$ be given. Applying Proposition~\ref{removal_weakcompact} to $2k+1$ and to the monotone function $h$ we obtain $p\in{\rm B}_N$ and $n\leq \psi_{\chi}(2k+1,h)$ such that
\begin{equation}\label{psi_a}
\norm{\mathsf{J}(p)-p}\leq \frac{1}{h(n)+1}
\end{equation}
and
\begin{equation}\label{psi_b}
\forall m\in [n, n+f(n)]\, \left( \langle u-p, z_{m+1}-p\rangle \leq \frac{1}{2(k+1)}\right).
\end{equation}
Clearly \eqref{psi_b} implies that for $m\in [n,f(n)]$ one has $r_{m,p}\leq \frac{1}{k+1}$.

Now, by \eqref{psi_a} $$\norm{\mathsf{J}(p)-p}\leq \frac{1}{\zeta\left((1+4N)(f(n)+1)-1,f(n)\right)+1}.$$
Hence, by \eqref{zeta_monotone}, for $m\leq f(n)$, $$\norm{\mathsf{J}_m(p)-p}\leq \frac{1}{(1+4N)(f(n)+1)}.$$
Also $\norm{\mathsf{J}_n(p)-p}\leq 1$ so, for $m\leq f(n)$,
$$v_{m,p}=\norm{\mathsf{J}_m(p)-p}\left(\norm{\mathsf{J}_m(p)-p}+2\norm{z_m-p}\right)\leq \frac{1+4N}{(1+4N)(f(n)+1)}=\frac{1}{f(n)+1},$$ which concludes the proof.
\end{proof}

\subsection{Rational approximation of the lim$\!$ sup}\label{SectionBypassACA}

In this section we show that the assumption of the existence of the $ \limsup $, as in Lemma~\ref{SuzukiLemma1}, can be replaced by a rational approximation. A detailed explanation on the origin of these lemmas is given in Section~\ref{Sectionlogic}.

The idea is that, by working with approximated notions, one can relax the properties of the $\limsup$ to something which is already satisfied by a suitable rational number. We start with the following easy result.

\begin{lemma}\label{lemmaratap}
Let $N \in \N$ and $(x_n)$ be a sequence of real numbers such that $\forall n \in \N (0 \leq x_n\leq N)$. Then 
\begin{equation}\label{lemmarationalaprox}
\begin{split}
\forall k,n \in \N \,\mforall f:\N \to \N  \, &\exists p<N(k+1)\\ &\left(\exists m \in [n, n+f(n)] \left(x_m \geq \frac{p}{k+1}\right) \wedge \forall m' \in [n,n+f(n)] \left(x_{m'} \leq \frac{p+1}{k+1}\right)\right).
\end{split}
\end{equation}
\end{lemma}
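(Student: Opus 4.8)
The plan is to fix $k,n \in \N$ and a monotone function $f$, and then exhibit the required $p$ explicitly by looking at the finite window $[n,n+f(n)]$. Since this window is finite, the set $\{x_m : m \in [n,n+f(n)]\}$ has a maximum, attained at some index $m_0 \in [n,n+f(n)]$; call this maximum value $M$. Because $0 \le x_m \le N$ for all $m$, we have $0 \le M \le N$, hence $0 \le (k+1)M \le N(k+1)$.

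Next I would set $p := \lfloor (k+1)M \rfloor$. From $0 \le (k+1)M \le N(k+1)$ we get $p \le N(k+1)$; to get the strict inequality $p < N(k+1)$ demanded in the statement one argues by cases: if $(k+1)M < N(k+1)$ then $p = \lfloor (k+1)M\rfloor < N(k+1)$ directly, and if $(k+1)M = N(k+1)$ (i.e.\ $M = N$) one may instead take $p := N(k+1)-1$, which still satisfies $p/(k+1) = N - \frac{1}{k+1} \le M$ and $(p+1)/(k+1) = N \ge x_{m'}$ for all $m'$ in the window. In either case $p < N(k+1)$.

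It then remains to verify the two conjuncts. For the existential conjunct, take $m := m_0$: by definition of the floor, $\frac{p}{k+1} \le M = x_{m_0}$, so $x_{m_0} \ge \frac{p}{k+1}$, and $m_0 \in [n,n+f(n)]$. For the universal conjunct, let $m' \in [n,n+f(n)]$; then $x_{m'} \le M$ by maximality of $M$, and again by the floor inequality $M < \frac{p+1}{k+1}$ (or $M = N = \frac{p+1}{k+1}$ in the boundary case), so $x_{m'} \le \frac{p+1}{k+1}$. This establishes \eqref{lemmarationalaprox}.

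I do not expect any genuine obstacle here: the statement is a purely finitary pigeonhole/discretization fact about a bounded finite set of reals, and the only mild subtlety is the boundary case $M = N$ needed to honour the strict inequality $p < N(k+1)$, which is handled by the case split above. Note that nothing uses monotonicity of $f$ in an essential way — $f(n)$ merely names the length of the window — so the hypothesis $f \le^* f$ is carried along only for uniformity with the rest of the paper's conventions.
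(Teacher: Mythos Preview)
Your proof is correct. You directly construct the witness $p$ by taking the maximum $M$ of the $x_m$ over the finite window and setting $p=\lfloor (k+1)M\rfloor$ (with the boundary adjustment when $M=N$); both conjuncts then follow immediately from the defining properties of the floor and the maximum. Your observation that the monotonicity of $f$ plays no role is also correct.

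The paper argues differently: it proceeds by contradiction, denoting $A(p):\equiv \forall m\in[n,n+f(n)]\,(x_m<\tfrac{p}{k+1})$, observes that the negation of the conclusion yields $A(p)\vee\neg A(p+1)$ for every $p<N(k+1)$, and shows by induction on $M$ that $\forall p\leq M\,(A(p)\vee\neg A(p+1))\to (A(0)\vee\neg A(M+1))$; instantiating $M=N(k+1)-1$ and noting $A(0)$ is false forces the existence of some $x_m\geq N$, which is then contradicted using the case $p=N(k+1)-1$. The reason for this more roundabout route is proof-theoretic: the paper later (Section~\ref{Sectionlogic}) emphasises that Lemma~\ref{lemmaratap} is established by $\Pi^0_1$-induction, making explicit the low logical strength of the argument --- a point relevant to the proof-mining programme and to the comparison with the $\Pi^0_2$-induction used in \cite{KS(ta)}. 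Your argument is shorter and more transparent as ordinary mathematics, but it hides the induction inside the appeal to ``take the maximum of finitely many reals'' (and to the floor function), so it does not directly exhibit the calibration the authors care about.
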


\begin{proof}
Suppose towards a contradiction that \eqref{lemmarationalaprox} does not hold. Then there exist $k,n \in \N$ and a monotone function $f$ such that for all $p<N(k+1)$ it holds that
\begin{equation}\label{hypreductio}
\forall m \in [n, n+f(n)] \left(x_m < \frac{p}{k+1}\right)\vee \exists m' \in [n,n+f(n)] \left(x_{m'} > \frac{p+1}{k+1}\right).
\end{equation}
This implies 
\begin{equation*}
\forall p<N(k+1)\left( A(p) \vee \neg A(p+1)\right),
\end{equation*}
where $A(p):\equiv \forall m \in [n,n+f(n)]\left(x_m<\frac{p}{k+1} \right)$. 
One easily shows by induction on $M \in \N$ that
\begin{equation*}
\forall M\left(\forall p \leq M \left(A(p) \vee \neg A(p+1)\right) \to \left(A(0) \vee \neg A(M+1)\right)\right).
\end{equation*} 
Hence, with $M=N(k+1)-1$ we conclude that
\begin{equation*}
\forall m \in [n,n+f(n)] \left(x_m< 0 \right) \vee \exists m \in [n,n+f(n)] \left(x_m \geq N \right).
\end{equation*}
Hence $\exists m \in [n,n+f(n)] \left(x_m \geq N\right)$. Now, by \eqref{hypreductio}, for $p=N(k+1)-1$ and the hypothesis, we have that for all $ m \in [n, n+f(n)]$ it holds that $x_m < \frac{N(k+1)-1}{k+1}=N-\frac{1}{k+1}$, which gives a contradiction. We conclude that \eqref{lemmarationalaprox} holds.
\end{proof}

The proof of Lemma~\ref{SuzukiLemma1} requires the following property of the $\limsup$
\begin{equation}\label{propertylimsup}
\forall k, M, t\in\N \,\exists m\geq M \,\forall n\geq m\, \left( x_{m+t} \geq \limsup x_n-\frac{1}{k+1}  \land x_n\leq \limsup x_n+\frac{1}{k+1}\right).
\end{equation}
We adapt Lemma~\ref{lemmaratap} to this property, obtaining a result that corresponds to a quantitative version of \eqref{propertylimsup}.
 
\begin{lemma}\label{lemmarationalapprox2}
Let $N \in \N$ and $(x_n)$ be a sequence of real numbers such that $\forall n \in \N (0 \leq x_n\leq N)$. Let $k,M,t \in \N$ and $f:\N \to \N$ be monotone, let $P:=N(k+1)$. For $i\in\{0, \dots,P\}$ define $n_i=M+it$ and $r_i:=\begin{cases}0, & i=P\\ t+r_{i+1}+f(n_{i+1}+r_{i+1}), &   i<P.\end{cases}$ Then 
\begin{equation}
\exists p<P\, \exists m \in [M,\theta]\left( x_{m+t} \geq \frac{p}{k+1} \wedge \forall n\in [m,m+f(m)]\left( x_n \leq \frac{p+1}{k+1}\right)\right),
\end{equation}
where $\theta=\theta(k,M,t,N,f):=M+(P-1)t+r_0$.
\end{lemma}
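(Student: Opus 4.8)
The plan is to argue by contradiction, extending the idea behind Lemma~\ref{lemmaratap}: negating the conclusion yields a statement saying that \emph{no} rational level $\tfrac{p}{k+1}$ with $p<P$ is ``good'', and from this I will push a witness upwards through all $P=N(k+1)$ levels until some $x_n$ is forced to exceed $N$, contradicting $0\le x_n\le N$. The new difficulties compared with Lemma~\ref{lemmaratap} are that the point where $x$ is large (namely $m+t$) and the window $[m,m+f(m)]$ on which $x$ stays capped now live in different places, and that this window grows with $m$; the recursively defined numbers $r_i$ are precisely what makes the resulting bookkeeping close up.

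I would first unfold the definitions. Writing $a_i:=n_i+r_i$ for $i=0,\dots,P$, the recursion for $r_i$ together with $n_i=M+it$ gives $a_P=M+Pt$ and $a_{i-1}=a_i+f(a_i)$ for $1\le i\le P$; hence $M+Pt=a_P\le a_{P-1}\le\cdots\le a_0$, the window $[a_i,a_{i-1}]$ is exactly the window $[a_i,\,a_i+f(a_i)]$ attached to its left endpoint, and $\theta=a_0+(P-1)t$. Assume now that the conclusion fails, i.e.
\[
(\ast)\qquad \forall p<P\ \forall m\in[M,\theta]\ \Big(x_{m+t}<\tfrac{p}{k+1}\ \lor\ \exists n\in[m,m+f(m)]\ \big(x_n>\tfrac{p+1}{k+1}\big)\Big).
\]
Put $b_0:=a_P=M+Pt$. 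The aim is to construct $b_1,\dots,b_P$ with $b_{j+1}\in[b_j-t,\,b_j-t+f(b_j-t)]$ and $x_{b_{j+1}}>\tfrac{j+1}{k+1}$ for $j=0,\dots,P-1$. At stage $j$, provided $b_j-t\in[M,\theta]$, apply $(\ast)$ with $m:=b_j-t$ and $p:=j$: its first disjunct reads $x_{b_j}<\tfrac{j}{k+1}$, which is false --- for $j\ge1$ by the previous stage, and for $j=0$ simply because $x_{b_0}\ge0$ --- so the second disjunct produces the required $b_{j+1}$. At $j=P-1$ this yields $b_P$ with $x_{b_P}>\tfrac{P}{k+1}=N$, contradicting $x_{b_P}\le N$.

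The heart of the argument, and the step I expect to be the most delicate, is verifying that $b_j-t\in[M,\theta]$ at each stage. The lower bound is routine: telescoping $b_{j+1}\ge b_j-t$ gives $b_j\ge a_P-jt=M+(P-j)t\ge M+t$ for $j\le P-1$, so $b_j-t\ge M$. For the upper bound I would use the nesting of the $a_i$ and the monotonicity of $f$ to show by induction that $b_j\le a_{P-j}$: indeed $b_0=a_P$, and if $b_j\le a_{P-j}$ then $b_j-t\le a_{P-j}$, hence $f(b_j-t)\le f(a_{P-j})$ and
\[
b_{j+1}\ \le\ (b_j-t)+f(b_j-t)\ \le\ (a_{P-j}-t)+f(a_{P-j})\ =\ a_{P-j-1}-t\ \le\ a_{P-j-1}.
\]
Thus $b_j\le a_0\le\theta$, so also $b_j-t\le\theta$, the construction is legitimate throughout, and one sees that $\theta=a_0+(P-1)t$ is exactly the room needed to absorb the $P-1$ shifts by $t$ incurred along the way, which finishes the proof.
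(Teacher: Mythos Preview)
Your proof is correct. The bookkeeping with the $a_i$'s and the inductive bound $b_j\le a_{P-j}$ closes up exactly as you describe, and at stage $j=P-1$ you obtain $x_{b_P}>N$, the desired contradiction.

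Your route, however, differs from the paper's. The paper proceeds \emph{constructively} via Lemma~\ref{lemmaratap}: for each $i\le P$ it applies that lemma on the window $[n_i,n_i+r_i]$ (with the constant function $g_i\equiv r_i$) to obtain $m_i\in[n_i,n_i+r_i]$ and $p_i<P$ with $x_{m_i}\ge\tfrac{p_i}{k+1}$ and $x_n\le\tfrac{p_i+1}{k+1}$ on the whole window. Having $P+1$ values $p_0,\dots,p_P$ in $\{0,\dots,P-1\}$, a pigeonhole step yields some $i_0<P$ with $p_{i_0}\le p_{i_0+1}$, and one then takes $p:=p_{i_0+1}$ and $m:=m_{i_0+1}-t$; the recursion defining $r_i$ is exactly what guarantees $[m,m+f(m)]\subseteq[n_{i_0},n_{i_0}+r_{i_0}]$. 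Your argument, by contrast, is a single contradiction in the spirit of the proof of Lemma~\ref{lemmaratap} itself, pushing a witness upward through the levels rather than invoking that lemma as a black box. The payoff of the paper's approach is modularity; the payoff of yours is that it is self-contained and in fact uses less of the slack in $\theta$ (your $b_j-t\le a_0\le\theta$ never touches the extra $(P-1)t$).
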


\begin{proof}
Let $k,M,t \in \N$ and $f$ be a given monotone function. We define, for each $i\leq P$, the monotone functions $g_i:=\lambda m.r_i$.
We apply \eqref{lemmarationalaprox} with $k=k$, $f=g_i$ and $n=n_i$, for $i \leq P$. Then, we find, for each $i \leq P$, $m_i \in [n_i, n_i+r_i]$ and $p_i <P$ such that $x_{m_i}\geq \frac{p_i}{k+1}$ and $\forall n \in [n_i,n_i+r_i] \left(x_{n} \leq \frac{p_i+1}{k+1} \right)$.
Now, there exists $i_0< P$ such that $p_{i_0}\leq p_{i_0+1}$, otherwise there would be a sequence of length $P+1$ of natural numbers  such that $p_{P}<p_{P-1}<\dots <p_1<p_0< P$, which is absurd.  
Define the natural numbers $m:=m_{i_0+1}-t$ and $p:=p_{i_0+1}$. Clearly $m \in [M,\theta]$ and $p<P$. We have that $x_{m+t}\geq \frac{p}{k+1}$. To conclude the result it is enough to show that $[m,m+f(m)]\subseteq[n_{i_0},n_{i_0}+r_{i_0}]$. Indeed, we would get, for $n \in [m,m+f(m)]$ that $x_n \leq \frac{p_{i_0}+1}{k+1}\leq \frac{p_{i_0+1}+1}{k+1}=\frac{p+1}{k+1}$.
We have that $m=m_{i_0+1}-t\geq n_{i_0+1}-t=n_{i_0}$, and since $f$ is monotone, $m+f(m)\leq m_{i_0+1}+f(m_{i_0+1})\leq n_{i_0+1}+r_{i_0+1}+f(n_{i_0+1}+r_{i_0+1})=n_{i_0}+t+r_{i_0+1}+ f(n_{i_0+1}+r_{i_0+1})=n_{i_0}+r_{i_0}$. Hence $[m,m+f(m)]\subseteq[n_{i_0},n_{i_0}+r_{i_0}]$.
\end{proof}

\section{Quantitative analysis}\label{SectionQuantitative}

In this section we carry out the quantitative analysis of Theorem~\ref{ThmYaoNoor}. In Subsection~\ref{SectionMetastability} we obtain intermediate results regarding asymptotic regularity and metastability depending on an additional condition. This additional condition is studied in Subsection~\ref{SectionMainLemmas} through the analysis of Suzuki's lemmas (Lemmas~\ref{SuzukiLemma1} and \ref{SuzukiLemma2}). In Subsection~\ref{Sectionrevisited} we prove our main result establishing the metastability for \eqref{PPA}.

We start our quantitative analysis of Theorem~\ref{ThmYaoNoor} by giving quantitative versions of the hypothesis of the theorem. We assume that there exist $a,c \in \N\setminus \{0\}$ and monotone functions $\ell,L, \Gamma,E: \N \to \N$ such that 
\begin{enumerate}[($Q_1$)]
\item\label{hyp1} $\forall k \in \N \,\forall n \geq \ell(k) \left(\lambda_n \leq \frac{1}{k+1}\right)$.
\item\label{ratediv} $\forall k \in \N \left(\sum_{i=1}^{L(k)}\lambda_i \geq k\right)$.
\item\label{ineqgamma} $\forall m\in \N\left(\frac{1}{a}\leq \gamma_m \leq1-\frac{1}{a} \right)$. 
\item\label{ineqcn} $\forall n \in \N\left(c_n \geq \frac{1}{c} \right)$.
\item\label{Q5} $\forall k\in \N \, \forall n\geq \Gamma(k)\left(\left \vert c_{n+1}-c_n \right\vert\leq \frac{1}{k+1}\right)$.
\item\label{ineqerror1} $\forall k \in \N \,\forall n \in \N \left(\sum_{i=E(k)+1}^{E(k)+n}\norm{e_i}\leq \frac{1}{k+1} \right)$. 
\end{enumerate}

The conditions $(Q_{\ref{hyp1}})-(Q_{\ref{ineqerror1}})$ are quantitative versions of, respectively, the hypothesis ($H_{\ref{H1}}$)$-$($H_{\ref{H6}}$). Indeed, condition ($Q_{\ref{hyp1}}$) states that $\ell$ is a rate of convergence for the sequence $(\lambda_n)$; condition ($Q_{\ref{ratediv}}$) postulates that $L$ is a rate of divergence for $\left(\sum \lambda_n\right)$; condition ($Q_{\ref{ineqgamma}}$) is the quantitative version of ($H_{\ref{H3}}$) together with the fact that $(\gamma_n) \subset (0,1)$; condition ($Q_{\ref{ineqcn}}$) expresses the fact that the terms of the sequence $(c_n)$ are above some positive quantity; condition ($Q_{\ref{Q5}}$) states that $\Gamma$ is a rate of convergence for the difference of terms of the sequence $(c_n)$ and condition $(Q_{\ref{ineqerror1}})$ expresses quantitatively that the sequence of the partial sums of the errors $e_n$ is a Cauchy sequence with Cauchy rate $E$. 

In our main result (Theorem~\ref{theoremyaonoor2})  we compute an explicit bound on the metastability of the iteration \eqref{PPA} under the assumptions $(Q_{\ref{hyp1}})-(Q_{\ref{ineqerror1}})$. 

\subsection{Metastability of the \texorpdfstring{$\mPPA$}{mPPA}}\label{SectionMetastability}
We show an intermediate metastability result depending on an additional condition \eqref{hypchi} in Theorem~\ref{theoremyaonoor}. Moreover, Proposition~\ref{lemmaintermediate} and Corollary~\ref{Corasympreg} give quantitative information on the asymptotic regularity of the iteration.
We start by showing that the sequence $(z_n)$ generated by \eqref{PPA} is bounded and give in \eqref{limwm-zm} the computational information corresponding to $\limsup\left(\norm{w_{n+1}-w_n}-\norm{z_{n+1}-z_n}\right)\leq 0$.
\begin{lemma}\label{lemmamain1}
Let $(z_n)$ be generated by \eqref{PPA}. Assume that there exist $a,c \in \N\setminus\{0\}$ and monotone functions $\ell, \Gamma,E$ such that $(Q_{\ref{hyp1}}), (Q_{\ref{ineqgamma}})-(Q_{\ref{ineqerror1}})$ hold.  Let $N_1, N_2, N_3 \in \N$ be such that $N_1 \geq \norm{u}$, $N_2 \geq \sum_{i=0}^{E(0)}\norm{e_i}+1$, and for some $s \in S$ one has $N_3 \geq \max\{\norm{u-s},\norm{z_0-s}\}$. Then  $\norm{z_n-s}\leq N_0$, where $N_0:= N_2+ N_3$. Moreover, with $z_{n+1}=\gamma_nz_n+(1-\gamma_n)w_n$, we have $\norm{w_n-s}\leq 2aN_0$ and
\begin{equation}\label{limwm-zm}
\forall k \in \N \, \forall n \geq \nu(k)\,\left(\norm{w_{n+1}-w_n}-\norm{z_{n+1}-z_n}\leq \frac{1}{k+1} \right),
\end{equation}
where $\nu(k):=\max\{\Gamma(10acN_0(k+1)),\ell(10a(N_0+N_1+N_3)(k+1)),E(5a(k+1))+1\}$.
\end{lemma}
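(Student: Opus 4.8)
The plan is to break the statement into three pieces: (a) the boundedness estimate $\norm{z_n-s}\leq N_0$, (b) the bound $\norm{w_n-s}\leq 2aN_0$ on the auxiliary sequence, and (c) the quantitative $\limsup$-type inequality \eqref{limwm-zm} with the explicit threshold $\nu(k)$. Part (a) I would prove by induction on $n$, exactly as in step (1) of Yao and Noor's proof, but keeping track of constants. Writing $z_{n+1}=\lambda_n u+\gamma_n z_n+\delta_n\mathsf{J}_n(z_n)+e_n$ and using that $s\in S=\mathrm{Fix}(\mathsf{J}_n)$ together with nonexpansiveness of $\mathsf{J}_n$, one gets $\norm{z_{n+1}-s}\leq \lambda_n\norm{u-s}+(\gamma_n+\delta_n)\norm{z_n-s}+\norm{e_n}=\lambda_n\norm{u-s}+(1-\lambda_n)\norm{z_n-s}+\norm{e_n}$. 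A convexity/induction argument then shows $\norm{z_n-s}\leq\max\{\norm{z_0-s},\norm{u-s}\}+\sum_{i}\norm{e_i}$; splitting the error sum at $E(0)$ and using $(Q_{\ref{ineqerror1}})$ (which gives $\sum_{i=E(0)+1}^{\infty}\norm{e_i}\leq 1$, hence the whole tail is absorbed into $N_2$) yields $\norm{z_n-s}\leq N_3+N_2=N_0$.

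For part (b), from $z_{n+1}=\gamma_n z_n+(1-\gamma_n)w_n$ I solve $w_n=\frac{z_{n+1}-\gamma_n z_n}{1-\gamma_n}$, so $w_n-s=\frac{(z_{n+1}-s)-\gamma_n(z_n-s)}{1-\gamma_n}$, whence $\norm{w_n-s}\leq\frac{1+\gamma_n}{1-\gamma_n}N_0\leq\frac{2}{1-\gamma_n}N_0$; by $(Q_{\ref{ineqgamma}})$ one has $1-\gamma_n\geq\frac1a$, giving $\norm{w_n-s}\leq 2aN_0$.

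Part (c) is the main obstacle and the genuinely computational part. Here I would follow Yao and Noor's step (2): from the definition of $w_n$ and the resolvent identity (Lemma~1), relate $w_{n+1}-w_n$ to $\mathsf{J}_n(z_n)$-terms, the coefficients $\lambda_n,\gamma_n,c_n$ and $e_n$, and estimate $\norm{w_{n+1}-w_n}-\norm{z_{n+1}-z_n}$ from above by a sum of three kinds of terms: a term controlled by $|\lambda_{n+1}-\lambda_n|$ or simply by $\lambda_n$ times a bound involving $\norm{u}$, $N_0$, $N_3$ (hence the $\ell(\cdots)$ coordinate of $\nu$), a term controlled by $|c_{n+1}-c_n|$ times a constant $\sim cN_0$ coming from the resolvent identity (hence the $\Gamma(\cdots)$ coordinate), and an error term bounded by $\norm{e_{n+1}}+\norm{e_n}$ (hence the $E(\cdot)+1$ coordinate). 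The bookkeeping here is routine but delicate: one must verify that with the stated multiplicities ($10acN_0$, $10a(N_0+N_1+N_3)$, $5a(k+1)$) each of the three pieces is $\leq\frac{1}{3(k+1)}$ — or whatever split of $\frac{1}{k+1}$ the constants actually realize — for all $n\geq\nu(k)$, using the monotonicity of $\ell,\Gamma,E$ so that the three separate thresholds can be combined by taking their maximum. The one subtlety to watch is that the resolvent-identity estimate naturally bounds $\norm{\mathsf{J}_{c_{n+1}}(x)-\mathsf{J}_{c_n}(x)}$ in terms of $\left\lvert 1-\frac{c_n}{c_{n+1}}\right\rvert\,\norm{\mathsf{J}_{c_{n+1}}(x)-x}$, and one must turn $\left\lvert 1-\frac{c_n}{c_{n+1}}\right\rvert=\frac{|c_{n+1}-c_n|}{c_{n+1}}$ into a clean bound using $(Q_{\ref{ineqcn}})$, i.e.\ $c_{n+1}\geq\frac1c$, which is exactly where the factor $c$ in $10acN_0$ enters. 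Once all three pieces are each made smaller than a third (or appropriate fraction) of $\frac{1}{k+1}$ beyond the respective thresholds, \eqref{limwm-zm} follows and the proof is complete.
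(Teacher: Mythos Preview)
Your plan is correct and matches the paper's proof essentially step for step. The only refinement worth noting is that in part (c) the paper's explicit decomposition of $w_{m+1}-w_m$ (using $w_m=\frac{\lambda_m u+\delta_m\mathsf{J}_m(z_m)+e_m}{1-\gamma_m}$) produces \emph{five} terms rather than three---three of them are $\ell$-controlled (the $u$-term, the $\bigl|\frac{\delta_{m+1}}{1-\gamma_{m+1}}-1\bigr|\norm{z_{m+1}-z_m}$ term, and the $\mathsf{J}_m(z_m)$-term), one is $\Gamma$-controlled, and one is $E$-controlled---and each is bounded by $\tfrac{1}{5(k+1)}$, which is precisely why the constants in $\nu(k)$ carry the factors $5$ and $10$; your ``three kinds of terms'' is the correct grouping by controlling function, but verifying the stated $\nu$ requires this finer five-way split.
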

\begin{proof}
Observe that $\sum_{i=0}^{n}\norm{e_i}\leq N_2$ for all $n \in \N$.
By the fact that $\lambda_n+\gamma_n+\delta_n=1$, for all $n \geq 0$ and observing that each resolvent $\mathsf{J}_c$ is nonexpansive, we have
\begin{equation*}
\begin{split}
\norm{z_{n+1}-s}&=\norm{\lambda_n(u-s)+\gamma_n(z_n-s)+\delta_n(\mathsf{J}_{n}(z_n)-s)+e_n}\\
&\leq \lambda_n\norm{u-s}+\gamma_n\norm{z_n-s}+\delta_n\norm{z_n-s}+\norm{e_n}\\
&=\lambda_n\norm{u-s}+(1-\lambda_n)\norm{z_n-s}+\norm{e_n}. 
\end{split}
\end{equation*}
One easily shows by induction on $n\in \N$ that $\norm{z_n-s}\leq \max\{\norm{u-s},\norm{z_0-s}\}+\sum_{i=0}^{n-1}\norm{e_i}\leq N_0$, from which we deduce that  $(z_n)$ is bounded.
We have that $\norm{w_n-s}\leq 2aN_0$. Indeed,  by ($Q_{\ref{ineqgamma}}$) we have 
\begin{equation*}
\norm{w_n -s}= \frac{\norm{z_{n+1}-s-\gamma_n(z_n-s)}}{1-\gamma_n}\leq \frac{2N_0}{1-\gamma_n}\leq 2aN_0.
\end{equation*}
We have 
\begin{equation*}
\begin{split}
w_{m+1}-w_m&=\frac{\lambda_{m+1}u+\delta_{m+1}\mathsf{J}_{m+1}(z_{m+1})+e_{m+1}}{1-\gamma_{m+1}}-\frac{\lambda_{m}u+\delta_{m}\mathsf{J}_{m}(z_{m})+e_m}{1-\gamma_{m}}\\
&=\left(\frac{\lambda_{m+1}}{1-\gamma_{m+1}}-\frac{\lambda_{m}}{1-\gamma_{m}} \right)u+\frac{\delta_{m+1}}{1-\gamma_{m+1}}\left(\mathsf{J}_{m+1}(z_{m+1})-\mathsf{J}_{m}(z_m)\right)\\
&+ \left(\frac{\delta_{m+1}}{1-\gamma_{m+1}} -\frac{\delta_{m}}{1-\gamma_{m}}\right)\mathsf{J}_{m}(z_m)+ \frac{e_{m+1}}{1-\gamma_{m+1}}-\frac{e_{m}}{1-\gamma_{m}}.
\end{split}
\end{equation*}
We claim that 

\begin{equation}\label{ineqJc}
\norm{\mathsf{J}_{m+1}(z_{m+1})-\mathsf{J}_{m}(z_m)}\leq \norm{z_{m+1}-z_m}+ 2cN_0|c_{m+1}-c_m|.
\end{equation}

To prove the claim observe that for every $n,m \in \N$ it holds that $ \norm{\mathsf{J}_{m}(z_n)-s}\leq \norm{z_n-s}\leq N_0$.
If $c_m\leq c_{m+1}$, by the resolvent identity we have
\begin{equation*}
\begin{split}
\norm{\mathsf{J}_{m+1}(z_{m+1})-\mathsf{J}_{m}(z_m)}&=\norm{\mathsf{J}_{m}\left(\frac{c_{m}}{c_{m+1}}z_{m+1}+\left(1-\frac{c_{m}}{c_{m+1}}\right)\mathsf{J}_{{m+1}}(z_{m+1})\right)-\mathsf{J}_{{m}}(z_{m})}\\
&\leq \frac{c_{m}}{c_{m+1}}\norm{z_{m+1}-z_m}+\left(1-  \frac{c_{m}}{c_{m+1}} \right)\norm{\mathsf{J}_{{m+1}}(z_{m+1})-z_m}\\
&\leq \norm{z_{m+1}-z_m}+c|c_{m+1}-c_m|\norm{\mathsf{J}_{{m+1}}(z_{m+1})-z_m}\\
&\leq  \norm{z_{m+1}-z_m}+2cN_0|c_{m+1}-c_m|.
\end{split}
\end{equation*}
If $c_{m+1}<c_m$, again by the resolvent identity we have
\begin{equation*}
\begin{split}
\norm{\mathsf{J}_{{m}}(z_{m})-\mathsf{J}_{{m+1}}(z_{m+1})}&=\norm{\mathsf{J}_{{m+1}}\left(\frac{c_{m+1}}{c_{m}}z_m+\left(1-\frac{c_{m+1}}{c_{m}}\right)\mathsf{J}_{{m}}(z_{m})\right)-\mathsf{J}_{{m+1}}(z_{m+1})}\\
&\leq \frac{c_{m+1}}{c_{m}}\norm{z_{m+1}-z_m}+\left(1-  \frac{c_{m+1}}{c_{m}} \right)\norm{\mathsf{J}_{{m}}(z_{m})-z_{m+1}}\\
&\leq \norm{z_{m+1}-z_m}+c|c_{m+1}-c_m|\norm{\mathsf{J}_{{m}}(z_{m})-z_{m+1}}\\
&\leq  \norm{z_{m+1}-z_m}+2cN_0|c_{m+1}-c_m|.
\end{split}
\end{equation*}
Hence \eqref{ineqJc} holds. Then 
\begin{equation}\label{ineqlim0}
\begin{split}
\norm{w_{n+1}-w_n}&- \norm{z_{m+1}-z_m}\leq \left\lvert\frac{\lambda_{m+1}}{1-\gamma_{m+1}}-\frac{\lambda_{m}}{1-\gamma_{m}} \right\rvert {\norm{u}}+\left \vert\frac{\delta_{m+1}}{1-\gamma_{m+1}}-1 \right \vert\norm{z_{m+1}-z_m}\\
&+\left\vert \frac{\delta_{m+1}}{1-\gamma_{m+1}}\right \vert2cN_0|c_{m+1}-c_m|+ \left\vert\frac{\delta_{m+1}}{1-\gamma_{m+1}}-\frac{\delta_{m}}{1-\gamma_{m}} \right \vert\norm{\mathsf{J}_{m}(z_m)}+\frac{\norm{e_{m+1}}}{1-\gamma_{m+1}}+\frac{\norm{e_{m}}}{1-\gamma_{m}}.
\end{split}
\end{equation}

Let $k \in \N$ and $m \geq \nu(k)$. We will see that each of the terms in \eqref{ineqlim0} is less than or equal to $\frac{1}{5(k+1)}$.

 Since $\ell$ satisfies $(Q_{\ref{hyp1}})$, we have that, for $m \geq \ell(10a(N_0+N_1+N_3)(k+1))$ 
\begin{equation}\label{ineqlim1}
\begin{split}
\left\vert\frac{\lambda_{m+1}}{1-\gamma_{m+1}}-\frac{\lambda_{m}}{1-\gamma_{m}} \right\vert{\norm{u}}&\leq \left(\frac{\lambda_{m+1}}{1-\gamma_{m+1}}+\frac{\lambda_{m}}{1-\gamma_{m}} \right) N_1\\
&\leq\left(\lambda_{m+1}+\lambda_{m} \right)aN_1  \\
&\leq \frac{2aN_1}{10a(N_0+N_1+N_3)(k+1)}\\
&\leq\frac{1}{5(k+1)}
\end{split}
\end{equation}
and
\begin{equation}\label{ineqlim2}
\begin{split}
\left \vert 1-\frac{\delta_{m+1}}{1-\gamma_{m+1}} \right\vert\norm{z_{m+1}-z_m}&\leq \left \vert \frac{1-\gamma_{m+1}-\delta_{m+1}}{1-\gamma_{m+1}}\right \vert2N_0\\
&\leq \lambda_{m+1}2aN_0\\
&\leq \frac{2aN_0}{10a(N_0+N_1+N_3)(k+1)}\\
&\leq\frac{1}{5(k+1)}.
\end{split}
\end{equation}

Observe that $\norm{\mathsf{J}_m(z_m)}\leq \norm{\mathsf{J}_m(z_m)-s}+\norm{u-s}+\norm{u}\leq N_0+N_1+N_3$. We then have
\begin{equation}\label{ineqlim4}
\begin{split}
\left\vert\frac{\delta_{m+1}}{1-\gamma_{m+1}}-\frac{\delta_{m}}{1-\gamma_{m}} \right \vert\norm{\mathsf{J}_{m}(z_m)}&\leq \left(\frac{\lambda_{m+1}}{1-\gamma_{m+1}}+\frac{\lambda_{m}}{1-\gamma_{m}} \right)(N_0+N_1+N_3)\\
&\leq \frac{2a(N_0+N_1+N_3)}{10a(N_0+N_1+N_3)(k+1)}=\frac{1}{5(k+1)}.
\end{split}
\end{equation}

Since $\Gamma$ satisfies $(Q_{\ref{Q5}})$, for $m \geq \Gamma(10acN_0(k+1))$ it holds that
\begin{equation}\label{ineqlim3}
\left\vert \frac{\delta_{m+1}}{1-\gamma_{m+1}}\right \vert2cN_0|c_{m+1}-c_m|\leq \frac{2acN_0}{10acN_0(k+1)}=\frac{1}{5(k+1)}.
\end{equation}

Since $E$ satisfies ($Q_{\ref{ineqerror1}}$), for  $m \geq E(5a(k+1))+1$ we have
\begin{equation}\label{ineqlim5}
\frac{\norm{e_{m+1}}}{1-\gamma_{m+1}}+\frac{\norm{e_{m}}}{1-\gamma_{m}}\leq a \left(\norm{e_m}+\norm{e_{m+1}} \right)\leq a\left(\sum_{i=E(5a(k+1))+1}^{m+1}\norm{e_i}\right)\leq \frac{a}{5a(k+1)+1}\leq \frac{1}{5(k+1)}.
\end{equation}
Combining \eqref{ineqlim0}-\eqref{ineqlim5} we conclude that \eqref{limwm-zm} holds.
\end{proof}

\begin{definition}
Let $(z_n),(w_n)$ be sequences in $H$. We say that a monotone function $\chi: \N \times \N^{\N} \to \N$ satisfies \eqref{hypchi} if
\begin{equation}\label{hypchi}\tag{$Q_S$}
\forall k \in \N \,\mforall f:\N \to \N \,\exists n \leq \chi(k,f)\,\forall m \in [n,n+f(n)]\left(\norm{w_{m}-z_m}\leq \frac{1}{k+1} \right).
\end{equation}
\end{definition}

\begin{remark}
The hypothesis \eqref{hypchi} corresponds to the quantitative information from Suzuki's lemma (Lemma~\ref{SuzukiLemma2}). With this assumption we will compute in Theorem~\ref{theoremyaonoor} metastability for \eqref{PPA} as well as some intermediate results regarding asymptotic regularity (Proposition~\ref{lemmaintermediate} and Corollary~\ref{Corasympreg}). An explicit function satisfying \eqref{hypchi} is computed in Remark~\ref{witQS}, using Lemma~\ref{LemmaSuzuki2}. 
\end{remark}

The next two results give quantitative information on asymptotic regularity for the sequence $(z_n)$.
\begin{proposition}\label{lemmaintermediate}
Let $(z_n)$ be generated by \eqref{PPA}. Assume that there exist $a,c \in \N \setminus\{0\}$ and monotone functions $\ell,E$ such that $(Q_{\ref{hyp1}}), (Q_{\ref{ineqgamma}}), (Q_{\ref{ineqcn}})$ and $(Q_{\ref{ineqerror1}})$ hold. Let $N_1, N_2, N_3 \in \N$ be such that $N_1 \geq \norm{u}$, $N_2 \geq \sum_{i=0}^{E(0)}\norm{e_i}+1$, and for some $s \in S$ one has $N_3 \geq \max\{\norm{u-s},\norm{z_0-s}\}$. Define $N_0:= N_2+ N_3$. Let $(w_n)$ be such that $z_{n+1}=\gamma_nz_n+(1-\gamma_n)w_n$ and assume that there is a monotone function $\chi$ satisfying \eqref{hypchi}.
 Then
\begin{enumerate}[$(i)$]
\item\label{limzn+1-zn}
$\forall k \in \N \,\mforall f:\N \to \N \,\exists n \leq \chi(k,f)\,\forall m \in [n,n+f(n)]\left(\norm{z_{m+1}-z_m}\leq \frac{1}{k+1} \right)$;
\item\label{limzn-J}
$\forall k \in \N \,\mforall f:\N \to \N \,\exists n \leq \max\left\{\mu(k),\chi\left(2a(k+1),\tilde{f}_k\right)\right\}\,\forall m \in [n,n+f(n)]\left(\norm{\mathsf{J}_{m}(z_m)-z_m}\leq \frac{1}{k+1} \right)$;
\item \label{limzn-Jc}$\forall k \in \N \,\mforall f:\N \to \N \,\exists n \leq \xi_{\chi}(k,f)\,\forall m \in [n,n+f(n)]\left(\norm{\mathsf{J}(z_m)-z_m}\leq \frac{1}{k+1} \right)$;
\end{enumerate}
where $\mu(k):=\max\{\ell(4a(k+1)(N_0 +N_3 )),E(4a(k+1)+1\}$ and $\xi_{\chi}(k,f):=\max\{\mu(2k+1),\chi(4a(k+1),\tilde{f}_{2k+1})\}$, with $\tilde{f}_k:\N \to \N$ the monotone function defined by $\tilde{f}_k(m)=\mu(k)+f(\max\{\mu(k),m \})$.
\end{proposition}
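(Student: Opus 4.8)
The statement of Proposition~\ref{lemmaintermediate} packages three metastable asymptotic-regularity facts, each reflecting a step in Yao and Noor's proof. My plan is to derive them in the order $(i) \to (ii) \to (iii)$, each time reducing the claim to the hypothesis \eqref{hypchi} on $\chi$ together with the quantitative information already available in Lemma~\ref{lemmamain1} and the quantitative versions of the conditions $(Q_i)$.

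\emph{Part $(i)$.} Since $z_{n+1}=\gamma_n z_n+(1-\gamma_n)w_n$, we have $z_{n+1}-z_n=(1-\gamma_n)(w_n-z_n)$, hence $\norm{z_{n+1}-z_n}=(1-\gamma_n)\norm{w_n-z_n}\leq \norm{w_n-z_n}$ because $(1-\gamma_n)\in(0,1)$. Thus any $n$ witnessing \eqref{hypchi} for the pair $(k,f)$ already witnesses the bound $\norm{z_{m+1}-z_m}\leq \frac{1}{k+1}$ on the same block $[n,n+f(n)]$, and the bound $\chi(k,f)$ is inherited verbatim.

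\emph{Part $(ii)$.} Here I would rewrite $\delta_n\mathsf{J}_n(z_n)=z_{n+1}-\lambda_n u-\gamma_n z_n-e_n$ and use $\delta_n=1-\lambda_n-\gamma_n$ to express $\mathsf{J}_n(z_n)-z_n$ in terms of $z_{n+1}-z_n$, $\lambda_n(u-z_n)$, $\lambda_n(z_n-z_{n+1})$-type corrections and $e_n$; since $\delta_n\geq$ something comparable to $\tfrac1a$ by $(Q_{\ref{ineqgamma}})$ (indeed $1-\gamma_n\geq\tfrac1a$ and $\delta_n=1-\lambda_n-\gamma_n$, so for $n$ large, once $\lambda_n$ is small, $\delta_n\geq\tfrac1{2a}$), dividing by $\delta_n$ costs a factor $\leq 2a$. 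The terms $\lambda_n(\dots)$ are controlled using $\ell$ and the bound $N_0+N_3$ (and $N_1$) on the relevant norms, and are made $\leq\frac{1}{2(k+1)}$ once $n\geq\mu(k)$; the error term $\norm{e_n}$ is absorbed using $E$, again for $n\geq\mu(k)$; the remaining term $2a\norm{z_{n+1}-z_n}$ is made $\leq\frac{1}{2(k+1)}$ by invoking Part $(i)$ at parameter $2a(k+1)$ with the shifted function $\tilde f_k$, which accounts for the $\chi(2a(k+1),\tilde f_k)$ summand and the $\max$ with $\mu(k)$. The role of $\tilde f_k(m)=\mu(k)+f(\max\{\mu(k),m\})$ is exactly to guarantee that the block supplied by $\chi$ still covers the block $[n,n+f(n)]$ after one has shifted past $\mu(k)$.

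\emph{Part $(iii)$.} Finally I pass from $\mathsf{J}_m=\mathsf{J}_{c_m}$ to $\mathsf{J}=\mathsf{J}_{1/c}$. Since $c_m\geq\frac1c$, Lemma~\ref{lemmaresolvineq} gives $\norm{\mathsf{J}(z_m)-z_m}\leq 2\norm{\mathsf{J}_m(z_m)-z_m}$; so a bound of $\frac{1}{2(k+1)+1}\cdot\tfrac{1}{1}$— more precisely $\frac{1}{2k+2}$ — on $\norm{\mathsf{J}_m(z_m)-z_m}$ yields the desired $\frac{1}{k+1}$ on $\norm{\mathsf{J}(z_m)-z_m}$. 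Hence applying Part $(ii)$ at parameter $2k+1$ (with the same $f$) gives the witness bound $\max\{\mu(2k+1),\chi(4a(k+1),\tilde f_{2k+1})\}=\xi_\chi(k,f)$, noting $2a((2k+1)+1)=4a(k+1)$.

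\emph{Main obstacle.} The only genuinely delicate point is the bookkeeping in Part $(ii)$: choosing the right split of $\mathsf{J}_n(z_n)-z_n$, verifying that $\delta_n$ stays bounded below once $n\geq\ell(\cdot)$ so that the division by $\delta_n$ is legitimate and costs only a factor $2a$, and checking that the shifted test function $\tilde f_k$ makes the block $\chi$ returns genuinely contain $[n,n+f(n)]$ for the chosen $n\geq\mu(k)$. Everything else is routine application of the earlier lemmas and the triangle inequality; I would not expect to need any compactness or choice beyond what is already encapsulated in $\chi$.
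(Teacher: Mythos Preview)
Your plan for Parts $(i)$ and $(iii)$ matches the paper exactly. For Part $(ii)$, however, the paper uses a cleaner decomposition than the one you sketch, and the difference is what makes the stated constants come out right. You propose to isolate $\delta_m(\mathsf{J}_m(z_m)-z_m)$ and divide by $\delta_m$, relying on $\delta_m\geq\tfrac1{2a}$ once $\lambda_m$ is small. The paper instead writes
\[
\|\mathsf{J}_m(z_m)-z_m\|\leq \|\mathsf{J}_m(z_m)-z_{m+1}\|+\|z_{m+1}-z_m\|,
\]
expands $\mathsf{J}_m(z_m)-z_{m+1}=\lambda_m(\mathsf{J}_m(z_m)-u)+\gamma_m(\mathsf{J}_m(z_m)-z_m)-e_m$, moves the term $\gamma_m\|\mathsf{J}_m(z_m)-z_m\|$ to the left, and divides by $1-\gamma_m$ rather than by $\delta_m$. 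Since $1-\gamma_m\geq\tfrac1a$ holds for \emph{all} $m$ by $(Q_{\ref{ineqgamma}})$, the multiplicative loss is exactly $a$, and applying Part $(i)$ at parameter $2a(k+1)$ then gives $a\cdot\frac{1}{2a(k+1)+1}\leq\frac{1}{2(k+1)}$ on the nose. With your decomposition the loss is roughly $2a$, and $2a\cdot\frac{1}{2a(k+1)+1}$ is not $\leq\frac{1}{2(k+1)}$; you would have to invoke Part $(i)$ at a larger parameter and would not recover the bound $\chi(2a(k+1),\tilde f_k)$ asserted in the proposition. The overall strategy is sound, but to prove the statement with the stated constants you should use the absorption-into-the-left-side trick (dividing by $1-\gamma_m$) rather than a direct division by $\delta_m$.
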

\begin{proof}
\eqref{limzn+1-zn}.
The result is a consequence of the fact that $$\norm{z_m-z_{m+1}}=\norm{z_m-\gamma_mz_m-(1-\gamma_m)w_m}=(1-\gamma_m)\norm{z_m-w_m}\leq \norm{z_m-w_m}.$$ 

\eqref{limzn-J}.
Observe that 
\begin{equation*}
\begin{split}
\norm{\mathsf{J}_{m}(z_m)-z_m}&\leq \norm{\mathsf{J}_{m}(z_m)-z_{m+1}}+\norm{z_{m+1}-z_m}\\
&\leq \norm{z_{m+1}-z_m}+\lambda_m\norm{\mathsf{J}_{m}(z_m)-u}+\gamma_m\norm{\mathsf{J}_{m}(z_m)-z_m}+\norm{e_m}.
\end{split}
\end{equation*}
Then
\begin{equation}
\norm{\mathsf{J}_{m}(z_m)-z_m}\leq \frac{\norm{z_{m+1}-z_m}}{1-\gamma_m}+\frac{\lambda_m\norm{\mathsf{J}_{m}(z_m)-u}}{1-\gamma_m}+\frac{\norm{e_m}}{1-\gamma_m}.
\end{equation}
We have that 
\begin{equation}\label{rate1}
\forall k \in \N \,\forall m \geq \mu(k) \left( \frac{\lambda_m\norm{\mathsf{J}_{m}(z_m)-u}}{1-\gamma_m}+\frac{\norm{e_m}}{1-\gamma_m}\leq\frac{1}{2(k+1)}\right).
\end{equation}
Indeed, for $m \geq \mu(k)$ we have that
\begin{equation*}
\begin{split}
\frac{\lambda_m\norm{\mathsf{J}_{m}(z_m)-u}}{1-\gamma_m}+\frac{\norm{e_m}}{1-\gamma_m}&\leq \frac{a\left(N_0+N_3 \right)}{4a(k+1)\left(N_0+N_3 \right)}+a\left( \sum_{i=E(4a(k+1))+1}^{m}\norm{e_i}\right)\\
&\leq \frac{1}{4(k+1)}+\frac{1}{4(k+1)}=\frac{1}{2(k+1)}.
\end{split}
\end{equation*}
Applying Part~\eqref{limzn+1-zn} to $2a(k+1)$ and $\tilde{f}_k$ we find $n'\leq \chi(2a(k+1),\tilde{f}_k)$ such that
\begin{equation}\label{meta1}
\forall m \in [n',n'+\tilde{f}_k(n')]\left(\frac{\norm{z_{m+1}-z_m}}{1-\gamma_m}\leq \frac{a}{(2a(k+1))+1}\leq \frac{1}{2(k+1)}\right).
\end{equation}
Put $n=\max\{\mu(k),n' \}$. Now $[n,n+f(n)]\subseteq [n',n'+\tilde{f}_k(n')]$ because clearly $n' \leq n$ and $n+f(n)\leq n'+\mu(k)+f(\max\{\mu(k),n' \})=n'+\tilde{f}_k(n')$. Then, from \eqref{rate1} and \eqref{meta1} we conclude that Part~\eqref{limzn-J} holds.

\eqref{limzn-Jc}.
By Lemma~\ref{lemmaresolvineq} and ($Q_{\ref{ineqcn}}$)  we have $\norm{\mathsf{J}(z_m)-z_m}\leq 2 \norm{\mathsf{J}_{m}(z_m)-z_m}$. Hence,  Part~\eqref{limzn-Jc} follows from Part~\eqref{limzn-J}. 
\end{proof}

If \eqref{hypchi} is satisfied with a rate of convergence, i.e.\ when $\chi$ does not depend on $f$, then the properties~\eqref{limzn+1-zn}--\eqref{limzn-Jc} in Proposition~\ref{lemmaintermediate} also hold with rates of convergence.
\begin{corollary}\label{Corasympreg}
Assume the conditions of Proposition~\ref{lemmaintermediate}. Assume also that $\chi(k,f)=\chi(k)$, for all $f$, i.e.
\begin{equation}
\forall k \in \N \,\forall m \geq \chi(k)\left(\norm{w_{m}-z_m}\leq \frac{1}{k+1} \right).
\end{equation}
Then
\begin{enumerate}[$(i)$]
\item
$\forall k \in \N \,\forall m \geq \chi(k)\left(\norm{z_{m+1}-z_m}\leq \frac{1}{k+1} \right)$;
\item
$\forall k \in \N \,\forall m \geq \max\{\mu(k),\chi(2a(k+1)\}\left(\norm{\mathsf{J}_{m}(z_m)-z_m}\leq \frac{1}{k+1} \right)$;
\item $\forall k \in \N\,\forall m \geq \xi_{\chi}(k)\left(\norm{\mathsf{J}(z_m)-z_m}\leq \frac{1}{k+1} \right)$;
\end{enumerate}
where $\xi_{\chi}(k):=\max\{\mu(2k+1),\chi(4a(k+1))\}$.
\end{corollary}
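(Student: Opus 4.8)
The plan is to deduce Corollary~\ref{Corasympreg} directly from Proposition~\ref{lemmaintermediate} by observing that a metastability statement whose bound does not depend on the counterfunction $f$ is equivalent to the corresponding convergence (eventual) statement. Concretely, for each of the three items, the hypothesis that $\chi(k,f)=\chi(k)$ is independent of $f$ means that in the conclusions $(i)$--$(iii)$ of Proposition~\ref{lemmaintermediate} the bounds $\chi(k,f)$, $\max\{\mu(k),\chi(2a(k+1),\tilde f_k)\}$ and $\xi_\chi(k,f)$ also become independent of $f$, since $\tilde f_k$ and $\xi_\chi$ only feed $f$ into $\chi$.

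First I would fix $k\in\N$ and apply the relevant part of Proposition~\ref{lemmaintermediate} with a counterfunction $f$ chosen large enough to reach past any prescribed index. Precisely, to show $\forall m\geq \chi(k)\,(\norm{z_{m+1}-z_m}\leq\frac{1}{k+1})$, given an arbitrary $m_0\geq\chi(k)$, I apply Proposition~\ref{lemmaintermediate}\eqref{limzn+1-zn} to $k$ and the constant-ish monotone function $f_{m_0}(x):=m_0$ (or $f_{m_0}(x):=\max\{x,m_0\}$ to ensure monotonicity); this yields some $n\leq\chi(k)$ with $\norm{z_{j+1}-z_j}\leq\frac{1}{k+1}$ for all $j\in[n,n+f_{m_0}(n)]$. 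Since $n\leq\chi(k)\leq m_0$ and $n+f_{m_0}(n)\geq m_0$, the index $m_0$ lies in $[n,n+f_{m_0}(n)]$, giving $\norm{z_{m_0+1}-z_{m_0}}\leq\frac{1}{k+1}$. As $m_0\geq\chi(k)$ was arbitrary, item $(i)$ follows. The same argument with part~\eqref{limzn-J} (using that $\tilde f_k(m)=\mu(k)+f(\max\{\mu(k),m\})$ still produces a bound depending on $\chi$ only through its first argument, hence independent of $f$ under the hypothesis) gives item $(ii)$ with threshold $\max\{\mu(k),\chi(2a(k+1))\}$, and with part~\eqref{limzn-Jc} gives item $(iii)$ with threshold $\xi_\chi(k)=\max\{\mu(2k+1),\chi(4a(k+1))\}$, exactly matching the displayed formulas once $\chi$ is univariate.

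The routine bookkeeping is just checking that the $f$-dependent bounds in Proposition~\ref{lemmaintermediate} collapse to the stated $f$-independent ones when $\chi(k,f)=\chi(k)$: for $(ii)$ one has $\chi(2a(k+1),\tilde f_k)=\chi(2a(k+1))$, and for $(iii)$, $\xi_\chi(k,f)=\max\{\mu(2k+1),\chi(4a(k+1),\tilde f_{2k+1})\}=\max\{\mu(2k+1),\chi(4a(k+1))\}$, both by the assumption of $f$-independence. I do not expect a genuine obstacle here; the only point requiring a little care is the choice of the auxiliary monotone function $f_{m_0}$ so that the interval $[n,n+f_{m_0}(n)]$ provably contains the target index $m_0$ — taking $f_{m_0}(x):=\max\{x,m_0\}$ (which is monotone and satisfies $n+f_{m_0}(n)\geq m_0$ for every $n$) handles this uniformly. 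Everything else is an immediate specialization of the already-proved metastable statements.
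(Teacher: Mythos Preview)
Your proposal is correct and matches the paper's treatment: the corollary is stated without proof as an immediate consequence of Proposition~\ref{lemmaintermediate}, and your argument makes explicit precisely the intended specialization---once $\chi$ is independent of $f$, the bounds in parts~\eqref{limzn+1-zn}--\eqref{limzn-Jc} collapse to the stated $f$-free thresholds, and a suitable choice of counterfunction converts each metastability statement into the corresponding eventual statement. One could equally well just rerun the (short) proof of Proposition~\ref{lemmaintermediate} under the stronger hypothesis, but your route via the already-proved metastable statements is equally valid and arguably cleaner.
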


Under the additional condition \eqref{hypchi} we have the following result concerning the metastability of \eqref{PPA}.

\begin{theorem}\label{theoremyaonoor}
Let $(z_n)$ be generated by \eqref{PPA}. Assume that there exist $a,c \in \N \setminus\{0\}$ and monotone functions $\ell,L,E$ such that $(Q_{\ref{hyp1}})-(Q_{\ref{ineqcn}})$ and $(Q_{\ref{ineqerror1}})$ hold. Let $\mathcal{C}:\N\to\N$ be a monotone function such that $c_n \leq\mathcal{C}(n)$, for all $n \in \N$.  Let $ N_2, N_3 \in \N$ be such that $N_2 \geq \sum_{i=0}^{E(0)}\norm{e_i}+1$, and for some $s \in S$ one has $N_3 \geq \max\{\norm{u-s},\norm{z_0-s}\}$. Define $N:=\max\{2N_3,N_2+N_3\}$.
Let $(w_n)$ be such that $z_{n+1}=\gamma_nz_n+(1-\gamma_n)w_n$ and assume that there is a monotone function $\chi$ satisfying \eqref{hypchi}.
Then
\begin{equation*}
\forall k \in \N \,\mforall f : \N \to \N \,\exists n \leq \phi_{\chi}(k,f) \forall i,j \in [n,n+f(n)] \left(\norm{z_i-z_j}\leq \frac{1}{k+1} \right),
\end{equation*}
where $\phi_{\chi}(k,f):=\Theta(4(k+1)^2-1,\lambda m\ldotp(m+f(m)),L,\Psi_{\chi}, G,4N^2)$, $G(k)=E(M_2(k+1))$, $\Theta$ is as in Lemma~\ref{lemmaqtXu2}, $\Psi_{\chi}$ as in Lemma~\ref{psi}, $M_1:=3 N_2+4N$ and $M_2=M_1+2(N_3+N)$.
\end{theorem}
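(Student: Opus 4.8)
The plan is to reduce the metastability of $(z_n)$ to an application of the quantitative Xu lemma (Lemma~\ref{lemmaqtXu2}), mirroring step~(6) of Yao and Noor's original proof. First I would fix $k\in\N$ and a monotone function $f$, and set $a_n := \norm{z_n-p}^2$ (or rather the family $s_{n,p}:=\norm{z_n-p}^2$ indexed by the approximate projection points $p\in\Omega:={\rm B}_N$), aiming to establish a recursive inequality of the form $s_{m+1,p}\le (1-\lambda_m)(s_{m,p}+v_{m,p})+\lambda_m r_{m,p}+\gamma_{m,p}$. The standard computation: writing $z_{m+1}-p=\lambda_m(u-p)+\gamma_m(z_m-p)+\delta_m(\mathsf{J}_m(z_m)-p)+e_m$, expanding the square, using convexity of $\norm{\cdot}^2$ on the $\gamma_m,\delta_m$ part together with $\lambda_m+\gamma_m+\delta_m=1$, and crucially replacing $\norm{\mathsf{J}_m(z_m)-p}$ by $\norm{z_m-p}+\norm{\mathsf{J}_m(p)-p}$ (since $\mathsf{J}_m$ is nonexpansive and fixes $s$ but not the approximate point $p$) — this is exactly where the error term $v_{m,p}=\norm{\mathsf{J}_m(p)-p}\big(\norm{\mathsf{J}_m(p)-p}+2\norm{z_m-p}\big)$ from Lemma~\ref{psi} appears. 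The cross term $2\lambda_m\langle u-p, z_{m+1}-p\rangle$ becomes $\lambda_m r_{m,p}$ with $r_{m,p}=2\langle u-p,z_{m+1}-p\rangle$, and the error contributions from $e_m$ (bounded using $(z_n)\subset{\rm B}_{N_0}$ and the boundedness constants $M_1,M_2$) are collected into $\gamma_{m,p}$, for which $G(k):=E(M_2(k+1))$ is a Cauchy rate by $(Q_{\ref{ineqerror1}})$.

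Next I would verify the four hypotheses of Lemma~\ref{lemmaqtXu2} for $\Omega={\rm B}_N$, $D:=4N^2$ (an upper bound on $s_{n,p}=\norm{z_n-p}^2\le(2N)^2$ since both $z_n$ and $p$ lie in ${\rm B}_N$ relative to $s$): condition~$(i)$ is immediate from $(Q_{\ref{ratediv}})$ with rate of divergence $L$; condition~$(ii)$ is the boundedness just noted; condition~$(iii)$ is the Cauchy rate $G$ for $\big(\sum\gamma_{n,p}\big)$ derived from $(Q_{\ref{ineqerror1}})$ with the constant $M_2$ absorbing the coefficients $a$ and $\norm{u-p}\le N_2+2N$ etc.; and condition~$(iv)$ is precisely the content of Lemma~\ref{psi}, which under the hypotheses of Proposition~\ref{removal_weakcompact} supplies $p\in{\rm B}_N$ and $n\le\Psi_{\chi}(k,f)$ with $v_{m,p}\le\frac1{f(n)+1}$ and $r_{m,p}\le\frac1{k+1}$ on $[n,f(n)]$. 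Feeding these into Lemma~\ref{lemmaqtXu2} with the target precision $4(k+1)^2-1$ (so that $s_{m,p}\le\frac1{4(k+1)^2}$, i.e.\ $\norm{z_m-p}\le\frac1{2(k+1)}$) and with the "reach" function $\lambda m\ldotp(m+f(m))$ yields $p\in{\rm B}_N$ and $n\le\Theta(4(k+1)^2-1,\lambda m\ldotp(m+f(m)),L,\Psi_{\chi},G,4N^2)=\phi_{\chi}(k,f)$ such that $\norm{z_m-p}\le\frac1{2(k+1)}$ for all $m\in[n,n+f(n)]$. The triangle inequality then gives $\norm{z_i-z_j}\le\norm{z_i-p}+\norm{p-z_j}\le\frac1{k+1}$ for all $i,j\in[n,n+f(n)]$, which is the desired conclusion.

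The main obstacle, as usual in this style of argument, is bookkeeping: getting the recursive inequality into the exact shape of Lemma~\ref{lemmaqtXu2}$(iii)$ requires carefully isolating which terms go into $v_{m,p}$ (the "multiplicative-error" slot, which must be small relative to $f(n)$ and is controlled via Lemma~\ref{samefix-pt-set} and the resolvent inequality Lemma~\ref{lemmaresolvineq}), which go into $r_{m,p}$ (the "$\limsup\le0$" slot, controlled by the weak-compactness elimination Proposition~\ref{removal_weakcompact} packaged in Lemma~\ref{psi}), and which go into $\gamma_{m,p}$ (the summable slot). One must also check that $\Omega={\rm B}_N$ is legitimately a fixed bounded set independent of $k,f$ — this is why $N:=\max\{2N_3,N_2+N_3\}$ is chosen to simultaneously dominate $2\norm{u-s}$ (so Propositions~\ref{projection3} and \ref{removal_weakcompact} apply) and $N_0=N_2+N_3$ (so $(z_n)\subset{\rm B}_N$). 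No genuinely new idea is needed beyond what is already assembled in Sections~\ref{SectionRestricting} and \ref{SectionMetastability}; the proof is the quantitative transcription of step~(6), with the $\limsup$-dependent pieces now supplied by the additional hypothesis \eqref{hypchi} through $\Psi_{\chi}$ and $\psi_{\chi}$.
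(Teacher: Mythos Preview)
Your proposal is correct and follows essentially the same route as the paper: derive the recursive inequality $s_{m+1,p}\le(1-\lambda_m)(s_{m,p}+v_{m,p})+\lambda_m r_{m,p}+\gamma_{m,p}$ for $s_{m,p}=\norm{z_m-p}^2$, verify the four hypotheses of Lemma~\ref{lemmaqtXu2} with $\Omega={\rm B}_N$, $D=4N^2$, $G(k)=E(M_2(k+1))$, and $\Psi_\chi$ from Lemma~\ref{psi}, then apply the lemma at precision $4(k+1)^2-1$ with reach function $\lambda m\ldotp(m+f(m))$ and conclude via the triangle inequality. The only cosmetic difference is that the paper isolates the error $e_m$ first (bounding $\norm{z_{m+1}-p}^2\le\norm{z_{m+1}-p-e_m}^2+M_1\norm{e_m}$) before extracting the $\lambda_m(u-p)$ term via the inner-product inequality $\norm{x+y}^2\le\norm{x}^2+2\langle y,x+y\rangle$, whereas you describe a direct expansion; both lead to the same $v_{m,p}$, $r_{m,p}$, $\gamma_{m,p}$.
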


\begin{remark}
Note that $\Psi_{\chi}=\Psi_{\chi}[N_2,N_3,a,c ,\ell,\mathcal{C}]$, i.e.\ the functional $\Psi_{\chi}$ depends on the value of $N_2$, $N_3$, $a$ and $c$, as well as on the functions $\ell$ and $\mathcal{C}$ -- and obviously on the functional $\chi$.
\end{remark}

\begin{proof}[Proof of Theorem~\ref{theoremyaonoor}]
Let $p \in {\rm B}_N$.
Then
\begin{equation*}
\begin{split}
\norm{z_{m+1}-p}^{2}&\leq \left(\norm{z_{m+1}-p-e_m}+\norm{e_m} \right)^2\\
& = \norm{z_{m+1}-p-e_m}^2+\norm{e_m}\left(\norm{e_m}+2\norm{z_{m+1}-p-e_m} \right)\\
& \leq \norm{z_{m+1}-p-e_m}^2+M_1\norm{e_m}\\
& = \norm{z_{m+1}-p-e_m-\lambda_m(u-p)+\lambda_m{u-p}}^2+M_1\norm{e_m}\\
& \leq  \norm{z_{m+1}-p-e_m-\lambda_m(u-p)}^2+2 \lambda_m \langle u-p,z_{m+1}-p-e_m \rangle+M_1\norm{e_m}\\
& \leq \norm{\gamma_{m}(z_{m}-p)+\delta_m\left(\mathsf{J}_{m}(z_m)-p \right)}^2+2 \lambda_m \langle u-p,z_{m+1}-p \rangle+ \norm{e_m}\left(M_1+2\lambda_m\norm{u-p} \right)\\
& \leq \left(\gamma_{m}\norm{(z_{m}-p)}+\delta_m\norm{\mathsf{J}_{m}(z_m)-p}\right)^2+2 \lambda_m \langle u-p,z_{m+1}-p \rangle \\& \,\,\,\,\, + \norm{e_m}\left(M_1+2\lambda_m\norm{u-p} \right)\\
& \leq  \left(\gamma_{m}\norm{(z_{m}-p)}+\delta_m\norm{\mathsf{J}_{m}(z_m)-\mathsf{J}_{m}(p)}+\delta_m\norm{\mathsf{J}_{m}(p)-p}\right)^2+2 \lambda_m \langle u-p,z_{m+1}-p \rangle \\& \,\,\,\,\, + \norm{e_m}\left(M_1+2\lambda_m\norm{u-p} \right)\\
& \leq  \left((1-\lambda_{m})\norm{(z_{m}-p)}+(1-\lambda_{m})\norm{\mathsf{J}_{m}(p)-p}\right)^2+2 \lambda_m \langle u-p,z_{m+1}-p \rangle \\& \,\,\,\,\, + \norm{e_m}\left(M_1+2\lambda_m\norm{u-p} \right)\\
& \leq  (1-\lambda_{m})\norm{z_{m}-p}^2+(1-\lambda_{m})\norm{\mathsf{J}_{m}(p)-p}\left(\norm{\mathsf{J}_{m}(p)-p}+2\norm{z_m-p} \right)\\& \,\,\,\,\,+2 \lambda_m \langle u-p,z_{m+1}-p\rangle + \norm{e_m}\left(M_1+2\lambda_m\norm{u-p} \right).
\end{split}
\end{equation*}

Then, for all $m \in \N$
\begin{equation*}
s_{m+1,p} \leq  (1-\lambda_{m})s_{m,p}+(1-\lambda_{m})v_{m,p}+ \lambda_m r_{m,p} +\gamma_{m,p},
\end{equation*}

where $s_{m,p}=\norm{z_m-p}^2$, $v_{m,p}=\norm{\mathsf{J}_{m}(p)-p}\left(\norm{\mathsf{J}_{m}(p)-p}+2\norm{z_m-p} \right)$, $r_{m,p}=2 \langle u-p,z_{m+1}-p\rangle$ and $\gamma_{m,p}=\norm{e_m}\left(M_1+2\lambda_m\norm{u-p} \right)$.

We verify that the conditions of Lemma~\ref{lemmaqtXu2} are satisfied with $\Omega={\rm B}_N$, $D=4N^2$, $\Psi_{\chi}$ as in Lemma~\ref{psi} and 
$G(k)=E(M_2(k+1))$.

 The first condition holds by hypothesis. Since $\norm{z_n-p}\leq 2N$, the second condition is true with $D=(2N)^2$.
For the third condition, using ($Q_{\ref{ineqerror1}}$), and the fact that $M_2 \geq M_1+2\lambda_m\norm{u-p}$ we have 
\begin{equation*}
\begin{split}
\sum_{i=G(k)+1}^{G(k)+n}\gamma_{i,p}&=\sum\limits_{i=E(M_2(k+1))+1}^{E(M_2(k+1))+n}\norm{e_m}\left(M_1+2\lambda_m\norm{u-p} \right)\\
& \leq \sum_{i=E(M_2(k+1))+1}^{E(M_2(k+1))+n}M_2\norm{e_m}\\
&\leq \frac{M_2}{M_2(k+1) +1}\leq \frac{1}{k+1}.
\end{split}
\end{equation*}
Finally, by Lemma~\ref{psi} the fourth condition of Lemma~\ref{lemmaqtXu2} is also verified.

By Lemma~\ref{lemmaqtXu2} we conclude that
\begin{equation}\label{finaleq}
\forall k\in\N \,\mforall f:\N\to\N \,\exists p\in {\rm B}_N\,\exists n\leq \Theta(k,f,L,\Psi_{\chi}, G,4N^2)\,\forall m\in[n,f(n)]\, \left(\|z_m-p\|^2\leq \frac{1}{k+1}\right).
\end{equation}
Let $k\in\N$ and a monotone function $f$ be given. By \eqref{finaleq} applied to $4(k+1)^2-1$ and to the function $\lambda m \ldotp (m+f(m))$, we find $p\in {\rm B}_N$ and $n\leq \phi_{\chi}(k,f)$ such that for $m\in[n,n+f(n)]$,
$$\|z_m-p\|^2\leq \frac{1}{4(k+1)^2}.$$
Hence, for $m\in[n,n+f(n)]$, $\|z_m-p\|\leq \frac{1}{2(k+1)}$ and with $i,j\in[n,n+f(n)]$, we have
$$\|z_i-z_j\|\leq \|z_i-p\|+\|z_j-p\|\leq \frac{1}{k+1},$$
which concludes the proof.
\end{proof}


\subsection{A quantitative version of Suzuki's lemmas}\label{SectionMainLemmas}

We now turn to the two lemmas by Suzuki required in the original proof. The next result is a partial quantitative version of Lemma~\ref{SuzukiLemma1}, which is enough for the quantitative version of Lemma~\ref{SuzukiLemma2} given in Lemma~\ref{LemmaSuzuki2}. As discussed in Remark~\ref{witQS} below, the latter allows us to obtain a concrete functional satisfying the condition \eqref{hypchi}.

\begin{lemma}\label{lemmaSuzuki1}
Let $(z_n),(w_n)$ be sequences in a normed space $X$. Let $(\alpha_n) \subset [0,1]$ be a sequence of real numbers and $a \in \N \setminus \{0\}$ be such that
\begin{equation}\label{ineq1}
\forall n \geq a \left(\alpha_n \leq 1 - \frac{1}{a}\right).
\end{equation}
Suppose that $z_{n+1}=\alpha_nw_n+(1-\alpha_n)z_n$, for all $n \in \N$ and that there exists a monotone function $\nu: \N \to \N$ satisfying
\begin{equation}\label{ineq2}
\forall k \in \N \, \forall n \geq \nu(k) \left(\norm{w_{n+1}-w_n}-\norm{z_{n+1}-z_n}\leq \frac{1}{k+1}\right).
\end{equation}
Let $N \in \N$ be such that $\forall n \in \N (\norm{w_n-z_n}\leq N)$.
Then 
\begin{equation*}
\begin{split}
\forall &k,l\in \N \,\forall t \in \N \setminus\{0\}\,\mforall f:\N \to \N \,\exists m \in [l,\varphi(k,f)]\,\exists p< R(a,k,t)\cdot N\\&\Bigg[\left(\norm{w_{m+t}-z_m}-\left(1+\sum_{i=0}^{t-1}\alpha_{m+i}\right)\frac{p+1}{R(a,k,t)}\geq -\frac{1}{k+1} \right) 
\wedge  \norm{w_{m+t}-z_{m+t}}\geq \frac{p}{R(a,k,t)}\\
& \quad \wedge \,\forall n \in [m,m+t+f(m)]\left(\norm{w_n-z_n}\leq \frac{p+1}{R(a,k,t)} \right)\Bigg],
\end{split}
\end{equation*}
where $R(a,k,t)=t(2t+1)a^t(k+1)$ and $\varphi(k,f)=\varphi(k,f,l,t,a,\nu,N):=\theta(R(a,k,t)-1,h(R(a,k,t)-1),t,N,g)$, with $g,h$ functions defined respectively by $g(m):=t+f(m)$ and $h(r):=h(a,l,\nu,r):=\max\{a,l, \nu(r)\}$. The function $\theta$ is as in Lemma~\ref{lemmarationalapprox2}.
\end{lemma}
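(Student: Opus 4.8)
\emph{The plan.} Write $x_n := \norm{w_n - z_n}$ (so $0 \le x_n \le N$) and abbreviate $R := R(a,k,t) = t(2t+1)a^t(k+1)$. First I would apply the arithmetized $\limsup$, Lemma~\ref{lemmarationalapprox2}, to the sequence $(x_n)$ with the substitutions $k \mapsto R-1$, $M \mapsto h(R-1) = \max\{a,l,\nu(R-1)\}$, $t\mapsto t$, $N\mapsto N$ and $f\mapsto g$ (note $g(m)=t+f(m)$ is monotone since $f$ is). There $P = N\bigl((R-1)+1\bigr) = R(a,k,t)\cdot N$ and the produced bound is exactly $\theta(R-1,h(R-1),t,N,g) = \varphi(k,f)$, so this yields $p < R(a,k,t)\cdot N$ and $m \in [h(R-1),\varphi(k,f)] \subseteq [l,\varphi(k,f)]$ with $x_{m+t} \ge \tfrac{p}{R}$ (which is the second conjunct $\norm{w_{m+t}-z_{m+t}}\ge\tfrac{p}{R}$) and $x_n \le \tfrac{p+1}{R}$ for all $n\in[m,m+g(m)] = [m,m+t+f(m)]$ (which is the third conjunct). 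All the remaining work is to establish the first conjunct, the lower bound $\norm{w_{m+t}-z_m} \ge \bigl(1+\sum_{i=0}^{t-1}\alpha_{m+i}\bigr)\tfrac{p+1}{R} - \tfrac{1}{k+1}$.

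For this I would set $u_j := w_{m+j}-z_{m+j}$ and $v_j := w_{m+j+1}-w_{m+j}$. The recursion $z_{n+1}=\alpha_n w_n + (1-\alpha_n)z_n$ gives $z_{m+j+1}-z_{m+j} = \alpha_{m+j}u_j$ and $u_{j+1} = v_j + (1-\alpha_{m+j})u_j$, hence by telescoping $w_{m+t}-z_m = u_t + \sum_{j=0}^{t-1}\alpha_{m+j}u_j$. Since $m\ge h(R-1)\ge\max\{a,\nu(R-1)\}$, condition~\eqref{ineq1} gives $\tfrac{1}{1-\alpha_{m+j}}\le a$, and condition~\eqref{ineq2} used at $n=m+j$ with index $R-1$, together with $x_{m+j}\le\tfrac{p+1}{R}$, gives $\norm{v_j} \le \norm{z_{m+j+1}-z_{m+j}}+\tfrac1R = \alpha_{m+j}x_{m+j}+\tfrac1R \le \alpha_{m+j}\tfrac{p+1}{R}+\tfrac1R$ for $j<t$.

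Next I would pick, via Hahn--Banach, a functional $x^* \in X^*$ with $\norm{x^*}=1$ and $\langle x^*,u_t\rangle = \norm{u_t}$. From $\langle x^*,u_{j+1}\rangle = \langle x^*,v_j\rangle + (1-\alpha_{m+j})\langle x^*,u_j\rangle$ one obtains $\langle x^*,u_j\rangle \ge \tfrac{\langle x^*,u_{j+1}\rangle - \norm{v_j}}{1-\alpha_{m+j}}$, and a downward induction on $j$ then yields $\langle x^*,u_j\rangle \ge \tfrac{p+1}{R} - G_j$, where $G_t := \tfrac1R$ (using $\langle x^*,u_t\rangle = \norm{u_t} = x_{m+t} \ge \tfrac pR$) and $G_j := a\bigl(G_{j+1}+\tfrac1R\bigr)$; solving the recurrence, $G_j \le \tfrac{(t+1)a^t}{R}$ for all $j$. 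Substituting back,
\[ \norm{w_{m+t}-z_m} \ \ge\ \langle x^*, w_{m+t}-z_m\rangle \ =\ \norm{u_t} + \sum_{j=0}^{t-1}\alpha_{m+j}\langle x^*,u_j\rangle \ \ge\ \Bigl(1+\sum_{j=0}^{t-1}\alpha_{m+j}\Bigr)\frac{p+1}{R} - \frac1R - t\cdot\frac{(t+1)a^t}{R}, \]
and since $R = t(2t+1)a^t(k+1)$ one checks $\tfrac1R + \tfrac{t(t+1)a^t}{R} \le \tfrac{1}{k+1}$ (equivalently $1 \le t^2 a^t$), which is the first conjunct.

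The main obstacle is precisely this last step. Because $X$ is merely a normed space, near-equality in the triangle inequality does \emph{not} force the vectors $u_0,\dots,u_t$ to be nearly parallel, so $\norm{u_t+\sum\alpha_{m+j}u_j}$ cannot be bounded below by the sum of the norms directly; running the estimate through a single supporting functional of $u_t$ and the recursion for $(u_j)$ circumvents this. The price is that each of the $t$ backward steps amplifies the accumulated error by $\tfrac{1}{1-\alpha_{m+j}}\le a$ --- which is exactly the origin of the $a^t$ factor in $R(a,k,t)$ --- and the bookkeeping needed to make all the error contributions fit inside the $\tfrac1{k+1}$ budget is what pins down the polynomial factor $t(2t+1)$. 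This is the quantitative counterpart of Suzuki's proof of Lemma~\ref{SuzukiLemma1}.
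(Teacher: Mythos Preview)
Your argument is correct. The application of Lemma~\ref{lemmarationalapprox2} with the indicated substitutions indeed produces $p<R(a,k,t)\cdot N$ and $m\in[h(R-1),\varphi(k,f)]\subseteq[l,\varphi(k,f)]$ satisfying the second and third conjuncts, and your backward induction via the norming functional, together with the recursion $u_{j+1}=v_j+(1-\alpha_{m+j})u_j$ and the bound $\|v_j\|\le\alpha_{m+j}\tfrac{p+1}{R}+\tfrac1R$, does yield $\langle x^*,u_j\rangle\ge\tfrac{p+1}{R}-G_j$ with $G_j\le\tfrac{(t+1)a^t}{R}$; the final arithmetic $1+t(t+1)a^t\le t(2t+1)a^t$ is exactly $1\le t^2a^t$, which holds.

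However, your route to the first conjunct is genuinely different from the paper's. The paper never invokes Hahn--Banach: it runs a backward induction directly on the quantity $\|w_{m+t}-z_{m+j}\|$, showing
\[
\|w_{m+t}-z_{m+j}\|\ \ge\ \Bigl(1+\sum_{i=j}^{t-1}\alpha_{m+i}\Bigr)\frac{p+1}{r+1}-\frac{(t-j)(2t+1)a^{t-j}}{r+1}
\]
for $j=t-1,\dots,0$, using only the triangle inequality together with $z_{m+j}=\alpha_{m+j-1}w_{m+j-1}+(1-\alpha_{m+j-1})z_{m+j-1}$ and the telescoping estimate $\|w_{m+t}-w_{m+j-1}\|\le\sum_{i=j-1}^{t-1}\|w_{m+i+1}-w_{m+i}\|$. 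Your linearisation through a single supporting functional of $u_t$ is cleaner bookkeeping and makes the origin of the $a^t$ amplification more transparent, but it imports a noneffective principle (Hahn--Banach) that the paper deliberately avoids---the whole point of the analysis being to keep the proof within the fragment for which the bounded functional interpretation guarantees primitive recursive bounds. Mathematically your proof is fine; methodologically, the paper's purely norm-based induction is the intended one.
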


\begin{proof}

Let $r,l \in \N$, $t \in \N \setminus \{0\}$ be arbitrary and $f:\N\to\N$ be a monotone function. Define $M:=h(r)=\max\{a,l, \nu(r)\}$. Considering $x_n:=\norm{w_n-z_n}$ and $\theta=\theta(r,M,t,N,g)$ we apply Lemma~\ref{lemmarationalapprox2} to $r,M,t$ and $g:\N \to \N$ defined by $g(m)=t+f(m)$ to find $p<N(r+1)$ and $m \in [M,\theta]$ such that 
\begin{equation}\label{appllimsup}
\norm{w_{m+t}-z_{m+t}}\geq \frac{p}{r+1} \wedge \forall n \in [m,m+g(m)]\left(\norm{w_n-z_n}\leq\frac{p+1}{r+1} \right).
\end{equation}

Furthermore, for  $n\geq m$ we have
\begin{equation}\label{formulaconj}
\alpha_n \leq 1-\frac{1}{a}\wedge \norm{w_{n+1}-w_n}-\norm{z_{n+1}-z_n}\leq \frac{1}{r+1}.
\end{equation}
From \eqref{appllimsup}, \eqref{formulaconj} and the fact that $M \geq l$ we conclude that
\begin{equation}\label{eqconjunta}
\begin{split}
\forall &r,l \in \N \,\forall t \in \N\setminus \{0\}\,\mforall f:\N \to \N \,\exists m \in [l,\theta] \,\exists p<N(r+1)\\ &\left[\forall n \geq m\left(\alpha_n \leq 1- \frac{1}{a} \, \wedge \right. \norm{w_{n+1}-w_n}-\norm{z_{n+1}-z_n}\leq \frac{1}{r+1}\right) \wedge \, \norm{w_{m+t}-z_{m+t}}\geq \frac{p}{r+1} \\
&\left.  \quad \wedge \, \forall n \in [m,m+t+f(m)]\left(\norm{w_n-z_n}\leq \frac{p+1}{r+1}\right)\right].
\end{split}
\end{equation}

Working with $m,p$ given by \eqref{eqconjunta}, we now argue that for all $j \leq t-1$.
\begin{equation}\label{ineq5}
\norm{w_{m+t}-z_{m+j}}\geq \left(1+ \sum_{i=j}^{t-1}\alpha_{m+i} \right)\frac{p+1}{r+1}- \frac{(t-j)(2t+1)a^{t-j}}{r+1}.
\end{equation}
We have
\begin{equation*}
\begin{split}
\frac{p}{r+1} &\leq \norm{w_{m+t}-z_{m+t}} \\
&= \norm{w_{m+t}-\alpha_{m+t-1}w_{m+t-1}-(1-\alpha_{m+t-1})z_{m+t-1}}  
\\ &\leq \alpha_{m+t-1}\norm{w_{m+t}-w_{m+t-1}}+(1-\alpha_{m+t-1})\norm{w_{m+t}-z_{m+t-1}} 
\\ & \leq \alpha_{m+t-1}\norm{z_{m+t}-z_{m+t-1}}+\frac{1}{r+1}+(1-\alpha_{m+t-1})\norm{w_{m+t}-z_{m+t-1}} 
\\ & = \alpha^2_{m+t-1}\norm{w_{m+t-1}-z_{m+t-1}}+\frac{1}{r+1}+(1-\alpha_{m+t-1})\norm{w_{m+t}-z_{m+t-1}} 
\\ & \leq  \alpha^2_{m+t-1}\frac{p+1}{r+1}+\frac{1}{r+1}+(1-\alpha_{m+t-1})\norm{w_{m+t}-z_{m+t-1}}. 
\end{split}
\end{equation*} 
Hence
\begin{equation*}
\begin{split}
\norm{w_{m+t}-z_{m+t-1}} &\geq \frac{\left(1- \alpha^2_{m+t-1}\right)\frac{p+1}{r+1}- \frac{2}{r+1}}{1-\alpha_{m+t-1}}\\
&=\left(1+\alpha_{m+t-1}\right)\frac{p+1}{r+1}-\frac{2}{(r+1)(1-\alpha_{m+t-1})}\\
&\geq \left(1+\alpha_{m+t-1}\right)\frac{p+1}{r+1}-\frac{2a}{r+1}\\
& \geq \left(1+\alpha_{m+t-1}\right)\frac{p+1}{r+1}-\frac{(2t+1)a}{r+1}.
\end{split}
\end{equation*}
So, $\eqref{ineq5}$ holds for $j=t-1$. To conclude we assume that $\eqref{ineq5}$ holds for some $j \in [1,t-1]$ and want to see that it holds for $j-1$. Since
\begin{equation*}
\begin{split}
\left(1+ \sum_{i=j}^{t-1} \alpha_{m+i}\right)\frac{p+1}{r+1}& - \frac{(t-j)(2t+1)a^{t-j}}{(r+1)} \leq\norm{w_{m+t}-z_{m+j}} \\
& = \norm{w_{m+t}- \alpha_{m+j-1}w_{m+j-1}-(1-\alpha_{m+j-1})z_{m+j-1}}\\
& \leq \alpha_{m+j-1}\norm{w_{m+t}-w_{m+j-1}}+(1-\alpha_{m+j-1})\norm{w_{m+t}-z_{m+j-1}}\\
& \leq \alpha_{m+j-1} \sum_{i=j-1}^{t-1}\norm{w_{m+i+1}-w_{m+i}}+(1-\alpha_{m+j-1})\norm{w_{m+t}-z_{m+j-1}}\\
& \leq \alpha_{m+j-1} \sum_{i=j-1}^{t-1}\norm{z_{m+i+1}-z_{m+i}}+\frac{t}{r+1} +(1-\alpha_{m+j-1})\norm{w_{m+t}-z_{m+j-1}}\\
& = \alpha_{m+j-1} \sum_{i=j-1}^{t-1} \alpha_{m+i}\norm{w_{m+i}-z_{m+i}}+\frac{t}{r+1} +(1-\alpha_{m+j-1})\norm{w_{m+t}-z_{m+j-1}}\\
& \leq  \alpha_{m+j-1} \sum_{i=j-1}^{t-1} \alpha_{m+i}\left(\frac{p+1}{r+1}\right)+\frac{t}{r+1} +(1-\alpha_{m+j-1})\norm{w_{m+t}-z_{m+j-1}}, \\
\end{split}
\end{equation*}
we obtain that
\begin{equation*}
\begin{split}
\norm{w_{m+t}-z_{m+j-1}} &\geq \frac{p+1}{r+1}\frac{\left(1+ \sum_{i=j}^{t-1}\alpha_{m+i} \right)-\alpha_{m+j-1}\sum_{i=j-1}^{t-1}\alpha_{m+i}}{1-\alpha_{m+j-1}}-\frac{(t-j)(2t+1)a^{t-j}+t}{(r+1)(1-\alpha_{m+j-1})}\\
&\geq \frac{p+1}{r+1}\frac{1+ \sum_{i=j}^{t-1}\alpha_{m+i}\left(1-\alpha_{m+j-1} \right)-\alpha^2_{m+j-1}}{1-\alpha_{m+j-1}}-\frac{(t-j)(2t+1)a^{t-j+1}+ta}{r+1}\\
&\geq \frac{p+1}{r+1} \left(\frac{1-\alpha^2_{m+j-1}}{1-\alpha_{m+j-1}}+\sum_{i=j}^{t-1}\alpha_{m+i} \right)-\frac{(t-j)(2t+1)a^{t-j+1}+(2t+1)a^{t-j+1}}{r+1}\\
&= \left(1+\sum_{i=j-1}^{t-1}\alpha_{m+i} \right)\frac{p+1}{r+1}-\frac{(t-j+1)(2t+1)a^{t-j+1}}{r+1},
\end{split}
\end{equation*}
which implies that $\eqref{ineq5}$ holds for all $j \leq t-1$ as we wanted.
Instantiating $j$ with $0$ in $\eqref{ineq5}$ we obtain
\begin{equation*}
\norm{w_{m+t}-z_m}\geq \left(1+\sum_{i=0}^{t-1}\alpha_{m+i} \right)\frac{p+1}{r+1}- \frac{t(2t+1)a^{t}}{r+1}.
\end{equation*}
Hence
\begin{equation}\label{formulasummary}
\begin{split}
\forall &r,l \in \N \,\forall t \in \N \setminus \{0\}\,\mforall f:\N \to \N \,\exists m \in[l,\theta(r,h(r),t,N,g)] \,\exists p<N(r+1)\\
&\Bigg[\norm{w_{m+t}-z_m}\geq \left(1+\sum_{i=0}^{t-1}\alpha_{m+i} \right)\frac{p+1}{r+1}- \frac{t(2t+1)a^{t}}{r+1}\, \wedge \norm{w_{m+t}-z_{m+t}}\geq \frac{p}{r+1}\\
& \quad \wedge \, \forall n \in [m,m+t+f(m)]\left(\norm{w_n -z_n}\leq \frac{p+1}{r+1} \right)\Bigg].
\end{split}
\end{equation}
Given $k \in \N$, we conclude the result by putting $r=R(a,k,t)-1$ in \eqref{formulasummary}. 
\end{proof}

 \begin{lemma}\label{LemmaSuzuki2}
Let $(z_n),(w_n)$ be sequences in a normed space $X$ and $N \in \N$ be such that $\norm{z_n},\norm{w_n}\leq N$, for all $n \in \N$. Let $(\alpha_n) \subset [0,1]$ be a sequence of real numbers and $a \in \N \setminus \{0\}$ be such that $\forall n \geq a \left(\frac{1}{a}\leq\alpha_n \leq 1 - \frac{1}{a}\right).$
Suppose that $z_{n+1}=\alpha_nw_n+(1-\alpha_n)z_n$, for all $n \in \N$ and that there exists a monotone function $\nu: \N \to \N$ such that 
\begin{equation}\label{eqnu}
\forall k \in \N \,\forall n \geq \nu(k) \left(\norm{w_{n+1}-w_n}-\norm{z_{n+1}-z_n}\leq \frac{1}{k+1}\right).
\end{equation}
Then 
\begin{equation*}
\forall k\in \N \,\mforall f: \N \to \N \,\exists n \leq \widetilde{\chi}(k,f)\, \forall m \in [n,n+f(n)] \left(\norm{w_m - z_m} \leq \frac{1}{k+1} \right),
\end{equation*}
where $\widetilde{\chi}(k,f)=\widetilde{\chi}(k,f,a,\nu,N)=\varphi(k,f,a,t,a,\nu,2N)$, with $\varphi$ is as in Lemma \ref{lemmaSuzuki1} and $t:=\max\{2Na(k+1),1\}$.
\end{lemma}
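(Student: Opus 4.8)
The plan is to derive this statement directly from the quantitative form of Suzuki's first lemma (Lemma~\ref{lemmaSuzuki1}), following the route by which Lemma~\ref{SuzukiLemma2} is classically obtained from Lemma~\ref{SuzukiLemma1}: there one fixes $t$ so large that $\left(1+\frac{t}{a}\right)d$ exceeds the a priori bound available for $\norm{w_{n+t}-z_n}$, which forces $d:=\limsup\norm{w_n-z_n}$ to be $0$. The finitary counterpart replaces $d$ by the rational number $\frac{p+1}{R}$ furnished by Lemma~\ref{lemmaSuzuki1} and the informal ``$t$ large enough'' by the explicit value $t:=\max\{2Na(k+1),1\}$ occurring inside $\widetilde{\chi}$.

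Concretely, given $k\in\N$ and a monotone $f$, I would put $t$ as above and $R:=R(a,k,t)=t(2t+1)a^t(k+1)$. Since $\norm{w_n-z_n}\leq\norm{w_n}+\norm{z_n}\leq 2N$ for all $n$, and since the hypotheses on $(\alpha_n)$, on the recursion $z_{n+1}=\alpha_nw_n+(1-\alpha_n)z_n$, and on $\nu$ are exactly those required by Lemma~\ref{lemmaSuzuki1} (with $2N$ in place of $N$ and with $l:=a$), I would apply that lemma to obtain $m\in[a,\varphi(k,f,a,t,a,\nu,2N)]=[a,\widetilde{\chi}(k,f)]$ and $p<2NR$ satisfying its three conclusions. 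Only two of these are needed: the lower bound $\norm{w_{m+t}-z_m}-\left(1+\sum_{i=0}^{t-1}\alpha_{m+i}\right)\frac{p+1}{R}\geq-\frac{1}{k+1}$, and the block estimate $\norm{w_n-z_n}\leq\frac{p+1}{R}$ valid for all $n\in[m,m+t+f(m)]$; the middle conclusion plays no role here.

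The heart of the argument is a short estimate. Because $m\geq a$, every index $m+i$ with $0\leq i\leq t-1$ satisfies $m+i\geq a$, hence $\alpha_{m+i}\geq\frac{1}{a}$ and $1+\sum_{i=0}^{t-1}\alpha_{m+i}\geq 1+\frac{t}{a}$. Combining this with $\norm{w_{m+t}-z_m}\leq 2N$ and the lower bound above yields
\begin{equation*}
\left(1+\frac{t}{a}\right)\frac{p+1}{R}\;\leq\;\norm{w_{m+t}-z_m}+\frac{1}{k+1}\;\leq\;2N+\frac{1}{k+1}.
\end{equation*}
Since $t\geq 2Na(k+1)$ we have $(k+1)\left(2N+\frac{1}{k+1}\right)=2N(k+1)+1\leq 1+\frac{t}{a}$, so $2N+\frac{1}{k+1}\leq\frac{1}{k+1}\left(1+\frac{t}{a}\right)$; dividing the displayed inequality by $1+\frac{t}{a}>0$ gives $\frac{p+1}{R}\leq\frac{1}{k+1}$.

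It then remains only to take $n:=m\leq\widetilde{\chi}(k,f)$ and to observe that $[n,n+f(n)]\subseteq[m,m+t+f(m)]$; the block estimate thus gives $\norm{w_{m'}-z_{m'}}\leq\frac{p+1}{R}\leq\frac{1}{k+1}$ for all $m'\in[n,n+f(n)]$, which is exactly the asserted metastability. The only delicate points are keeping track of the parameters supplied to $\varphi$ and the elementary arithmetic that justifies the choice $t=2Na(k+1)$; the genuine work is all carried by Lemma~\ref{lemmaSuzuki1}, so I foresee no real obstacle in this final deduction.
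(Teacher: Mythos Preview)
Your proof is correct and follows essentially the same route as the paper: apply Lemma~\ref{lemmaSuzuki1} with $l=a$, $t=\max\{2Na(k+1),1\}$ and bound $2N$ on $\norm{w_n-z_n}$, then use $m\geq a$ to get $\sum_{i=0}^{t-1}\alpha_{m+i}\geq t/a$ and combine this with $\norm{w_{m+t}-z_m}\leq 2N$ and the first conclusion of the lemma to force $\frac{p+1}{R}\leq\frac{1}{k+1}$, after which the third conclusion gives the block estimate on $[m,m+f(m)]$. The only difference is cosmetic: the paper argues by contradiction (assuming some point in $[m,m+f(m)]$ has $\norm{w_n-z_n}>\frac{1}{k+1}$, hence $\frac{1}{k+1}<\frac{p+1}{R}$, and then deriving a strict chain $2N+\frac{1}{k+1}<2N+\frac{1}{k+1}$), whereas you extract the same inequality $\frac{p+1}{R}\leq\frac{1}{k+1}$ directly; your arrangement is slightly cleaner but the content is identical.
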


\begin{proof}
Suppose towards a contradiction that there exist $k_0 \in \N$ and a monotone function $f_0$ such that   
\begin{equation*}
\forall m \leq \widetilde{\chi}(k_0,f_0) \,\exists n \in [m,m+f_0 (m)]\left( \norm{w_n-z_n}>\frac{1}{k_0+1}\right).
\end{equation*}
Define $t_0:=\max\{2Na(k_0+1),1\}$. We have that $t_0 \geq 1$ and $\left(1+\dfrac{t_0}{a}\right)\dfrac{1}{k_0+1} \geq 2N+ \dfrac{1}{k_0+1}$.
Applying Lemma \ref{lemmaSuzuki1} with $k=k_0$, $l=a$, $t=t_0$ and $f=f_0$, we find $p\in \N$ and $m \in [a, \varphi(k_0,f_0,a,t_0,a,\nu,2N)]=[a, \widetilde{\chi}(k_0,f_0)]$ such that 
\begin{equation*}
\begin{split}
\norm{w_{m+t_0}-z_m}&\geq \left(1+ \sum_{i=0}^{t_0-1}\alpha_{m+i} \right)\frac{p+1}{R(a,k_0,t_0)}-\frac{1}{k_0+1}\wedge \norm{w_{m+t_0}-z_{m+t_0}}\geq \frac{p}{R(a,k_0,t_0)}\\
&  \quad \wedge \,\forall n \in [m, m+t_0+f_0(m)]\left(\norm{w_n-z_n}\leq\frac{p+1}{R(a,k_0,t_0)}\right).
\end{split}
\end{equation*}
We have $[m,m+f_0(m)]\subseteq[m,m+t_0+f_0(m)]$, and then $$\frac{1}{k_0+1}<\norm{w_m-z_m}\leq\frac{p+1}{R(a,k_0,t_0)}.$$   Hence
\begin{equation*}
\begin{split}
2N+ \frac{1}{k_0+1}&\leq\left(1+\frac{t_0}{a}\right)\frac{1}{k_0+1}\\
&\leq \left(1+\sum_{i=0}^{t_0-1}\alpha_{m+i} \right)\frac{1}{k_0+1}\\
&< \left(1+\sum_{i=0}^{t_0-1}\alpha_{m+i} \right)\frac{p+1}{R(a,k_0,t_0)}\\
&\leq \norm{w_{m+t_0}-z_m}+\frac{1}{k_0+1}\leq 2N+\frac{1}{k_0+1},
\end{split}
\end{equation*}
a contradiction.
\end{proof}

\subsection{Metastability revisited}\label{Sectionrevisited}

\begin{remark}\label{witQS}
Under the conditions of  Lemma~\ref{lemmamain1}, the function $\nu(k):=\max\{\Gamma(10acN_0(k+1)),\ell(10a(N_0+N_1+N_3)(k+1)),E(5a(k+1))+1\}$ satisfies \eqref{eqnu}. Furthermore, $\norm{z_n},\norm{w_n} \leq 2aN_0+N_1+N_3$. Hence, Lemma~\ref{LemmaSuzuki2} with $N=2aN_0+N_1+N_3$, $\alpha_n=1-\gamma_n$  is satisfied with the function $\nu$ and outputs the function $\chi_0$ defined by $$\chi_0(k,f):=\widetilde{\chi}(k,f,a,\nu,2aN_0+N_1+N_3),$$ which satisfies \eqref{hypchi}.  Note that having this function $\chi_0$, the hypothesis \eqref{hypchi} can be removed from Proposition~\ref{lemmaintermediate}. Moreover, the bound in Proposition~\ref{lemmaintermediate}\eqref{limzn-J} can be simplified to $\chi_0(2a(k+1),\tilde{f}_k)$, and similarly for $\xi_{\chi_0}$.
\end{remark}

\begin{notation}
We write $\xi,\psi$ and $\Psi$, instead of $\xi_{\chi_0},\psi_{\chi_0}$ and $\Psi_{\chi_0}$, respectively, where $\chi_0$ is the functional defined in Remark~\ref{witQS}.
\end{notation}

Having an explicit function satisfying \eqref{hypchi} we can now prove metastability for \eqref{PPA} with the initial quantitative assumptions  $(Q_{\ref{hyp1}})-(Q_{\ref{ineqerror1}})$.

\begin{theorem}\label{theoremyaonoor2}
Let $(z_n)$ be generated by \eqref{PPA}. Assume that there exist $a,c \in \N \setminus\{0\}$ and monotone functions $\ell,L,\Gamma,E$ such that $(Q_{\ref{hyp1}})-(Q_{\ref{ineqerror1}})$ hold. Let $\mathcal{C}:\N\to\N$ be a monotone function such that $c_n \leq\mathcal{C}(n)$, for all $n \in \N$. Let $N_1, N_2, N_3 \in \N$ be such that $N_1 \geq \norm{u}$, $N_2 \geq \sum_{i=0}^{E(0)}\norm{e_i}+1$, and for some $s \in S$ one has $N_3 \geq \max\{\norm{u-s},\norm{z_0-s}\}$. Define $N:=\max\{2N_3,N_2+N_3\}$.
Let $(w_n)$ be such that $z_{n+1}=\gamma_nz_n+(1-\gamma_n)w_n$.
Then
\begin{equation*}
\forall k \in \N \,\forall f : \N \to \N \,\exists n \leq \phi(k,f)\, \forall i,j \in [n,n+f(n)] \left(\norm{z_i-z_j}\leq \frac{1}{k+1} \right),
\end{equation*}
where $\phi(k,f):=\phi_{\chi_0}(k,f^{\mathrm{maj}})$, with $\phi_{\chi_0}(k,f):=\Theta(4(k+1)^2-1,\lambda m\ldotp(m+f(m)),L,\Psi, G,4N^2)$, $G(k)=E(M_2(k+1))$, $\Theta$ as in Lemma~\ref{lemmaqtXu2}, $\Psi$ given by Lemma~\ref{psi}, $M_1:=3 N_2+4N$ and $M_2=M_1+2(N_3+N)$.
\end{theorem}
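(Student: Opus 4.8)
The plan is to deduce Theorem~\ref{theoremyaonoor2} from the conditional result Theorem~\ref{theoremyaonoor} by exhibiting an explicit monotone functional that witnesses the hypothesis \eqref{hypchi}; once such a witness $\chi_0$ is in hand, the abstract bound $\phi_\chi$ of Theorem~\ref{theoremyaonoor} specialises to the stated $\phi$, modulo the passage from monotone $f$ to arbitrary $f$.

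The first step is to apply Lemma~\ref{lemmamain1}. Its hypotheses $(Q_{\ref{hyp1}})$ and $(Q_{\ref{ineqgamma}})$--$(Q_{\ref{ineqerror1}})$ are all among the assumptions $(Q_{\ref{hyp1}})$--$(Q_{\ref{ineqerror1}})$ available here, so it yields $(z_n)\subset {\rm B}_{N_0}$ for $N_0:=N_2+N_3$, the estimate $\norm{w_n-s}\le 2aN_0$, and --- most importantly --- that the monotone function $\nu$ recalled in Remark~\ref{witQS} satisfies \eqref{limwm-zm}. Now \eqref{limwm-zm} is exactly the hypothesis \eqref{eqnu} of Lemma~\ref{LemmaSuzuki2} for the choice $\alpha_n:=1-\gamma_n$, for which $z_{n+1}=\alpha_n w_n+(1-\alpha_n)z_n$ holds by the definition of $(w_n)$. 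A short triangle-inequality computation using $\norm{s}\le\norm{u-s}+\norm{u}\le N_3+N_1$ gives $\norm{z_n},\norm{w_n}\le 2aN_0+N_1+N_3$, while $(Q_{\ref{ineqgamma}})$ yields $\frac{1}{a}\le 1-\gamma_n\le 1-\frac{1}{a}$ for every $n$. With all hypotheses of Lemma~\ref{LemmaSuzuki2} verified, that lemma outputs the functional $\chi_0(k,f):=\widetilde{\chi}(k,f,a,\nu,2aN_0+N_1+N_3)$, and $\chi_0$ satisfies \eqref{hypchi}; this is precisely the content of Remark~\ref{witQS}.

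The second step is to invoke Theorem~\ref{theoremyaonoor} with $\chi:=\chi_0$. The remaining hypotheses $(Q_{\ref{hyp1}})$--$(Q_{\ref{ineqcn}})$ and $(Q_{\ref{ineqerror1}})$, the majorant $\mathcal{C}$ of $(c_n)$, and the constants $N_2,N_3$ and $N=\max\{2N_3,N_2+N_3\}$ are exactly those fixed in the present statement, so Theorem~\ref{theoremyaonoor} applies directly and yields, for every $k\in\N$ and every \emph{monotone} $f$, some $n\le\phi_{\chi_0}(k,f)$ with $\norm{z_i-z_j}\le\frac{1}{k+1}$ for all $i,j\in[n,n+f(n)]$.

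The last step removes the monotonicity restriction on $f$. Given an arbitrary $f:\N\to\N$, I would apply the previous step to the monotone majorant $f^{\mathrm{maj}}(n):=\max_{i\le n}f(i)$, obtaining $n\le\phi_{\chi_0}(k,f^{\mathrm{maj}})=\phi(k,f)$ for which the metastability property holds on $[n,n+f^{\mathrm{maj}}(n)]$; since $f(n)\le f^{\mathrm{maj}}(n)$ we have $[n,n+f(n)]\subseteq[n,n+f^{\mathrm{maj}}(n)]$, so the conclusion transfers. I do not expect a genuine obstacle here: the argument is essentially bookkeeping, and the one point that demands care is keeping the distinct roles of the letter $N$ apart --- the radius $\max\{2N_3,N_2+N_3\}$ of the ball ${\rm B}_N$ required by Theorem~\ref{theoremyaonoor} versus the uniform bound $2aN_0+N_1+N_3$ on $\norm{z_n},\norm{w_n}$ that is fed into Lemma~\ref{LemmaSuzuki2} --- together with checking that each quantitative hypothesis $(Q_i)$ is deployed exactly where the cited lemmas require it.
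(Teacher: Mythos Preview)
Your proposal is correct and follows exactly the route the paper takes: it uses Remark~\ref{witQS} (which packages Lemma~\ref{lemmamain1} and Lemma~\ref{LemmaSuzuki2}) to produce the concrete $\chi_0$ witnessing \eqref{hypchi}, then instantiates Theorem~\ref{theoremyaonoor} with $\chi=\chi_0$, and finally passes from monotone $f$ to arbitrary $f$ via $f^{\mathrm{maj}}$, which is precisely what the definition $\phi(k,f):=\phi_{\chi_0}(k,f^{\mathrm{maj}})$ encodes. Your side checks (the bound $2aN_0+N_1+N_3$ on $\norm{z_n},\norm{w_n}$, the verification of the $\alpha_n$-condition from $(Q_{\ref{ineqgamma}})$, and the inclusion $[n,n+f(n)]\subseteq[n,n+f^{\mathrm{maj}}(n)]$) are all in order.
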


As an application we consider the special case where the sequence $(c_n)$ is constant. We note that this case was also considered in Yao and Noor's paper \cite[Corollary~3.1]{YaoNoor2008}.

Let $c_0$ be a positive real number. Consider the sequence $(c_n)$ constantly equal to $c_0$, and $(z_n)$ generated by \eqref{PPA}. Assume that there exist $a,c,C \in \N\setminus \{0\}$ and monotone functions $\ell,L,E$ such that $(Q_{\ref{hyp1}})-(Q_{\ref{ineqgamma}})$ and $(Q_{\ref{ineqerror1}})$ hold and that $C \geq c_0 \geq \frac{1}{c}$. We consider $\mathcal{C}:\N \to \N$ to be the function identicaly equal to $C$. Note that we can take either $c=1$ or $C=1$.
Clearly $(Q_{\ref{ineqcn}})$, and $(Q_{\ref{Q5}})$ with $\Gamma(k)\equiv 0$ are satisfied. The definition of $\nu$ in Lemma~\ref{lemmamain1} simplifies to $$\nu(k)=\max\{\ell(8 a(N_0+N_1+N_3)(k+1)),E(4a(k+1))+1\},$$ 
which causes changes in $\chi_0$ and consequently in $\xi$, $\psi$ and $\Psi$. This simplifies the bound $\phi$ obtained in Theorem~\ref{theoremyaonoor2}.

\section{Final considerations}\label{Sectionlogic}

In Theorem~\ref{theoremyaonoor2} we obtain a bound on the metastability of \eqref{PPA} which is uniform on the parameters of the algorithm. Let us elaborate on this. The bound is uniform on the choice of the anchor point $u$, the given initial point $z_0$ and a point $s \in S$ witnessing the assumption that $S$ is nonempty, depending only on natural numbers $N_1$ and $N_3$. The dependence on the sequences $(\lambda_n), (\gamma_n)$ and $(c_n)$ is  respectively only in the form of a rate of convergence $\ell$ satisfying $(Q_{\ref{hyp1}})$ and a rate of divergence $L$ satisfying $(Q_{\ref{ratediv}})$; a natural number $a$ satisfying $(Q_{\ref{ineqgamma}})$; and a natural number $c$ satisfying $(Q_{\ref{ineqcn}})$, a rate of convergence $\Gamma$, satisfying $(Q_{\ref{Q5}})$ and on a majorizing function  $\mathcal{C}$. Finally, it is uniform on the error sequence $(e_n)$, as it depends only on a Cauchy rate $E$ satisfying $(Q_{\ref{ineqerror1}})$ and on a natural number $N_2$.

It is well-know that for $(\lambda_n)\subset (0,1)$ the condition $\sum \lambda_n=\infty$ is equivalent to the condition $\prod (1-\lambda_n)=0$. Although we don't do this here, one could have worked with a rate of convergence towards zero $L'$ for $\left(\prod (1-\lambda_n)\right)$ instead of the rate of divergence $L$ for $\left(\sum \lambda_n\right)$. This is done for similar quantitative analyses in \cite{LLPP(ta),PP(ta)}. In certain cases, that option may prove useful as a function $L'$ may be of lower complexity than that of a function $L$, e.g.\ the sequence $\lambda_n=\frac{1}{n+1}$ has a liner rate $L'$ but an exponential rate $L$.

Metastability and the Cauchy property are equivalent. So, Theorem~\ref{theoremyaonoor2} and the fact that the space $ H$ is complete imply that \eqref{PPA} is strongly convergent. Similarly, from Proposition~\ref{lemmaintermediate} (with instantiation $\chi=\chi_0$) we conclude that $\lim \norm{\mathsf{J}(z_n)-z_n}=0$ and thus \eqref{PPA} converges to a zero of the maximal monotone operator $T$, say $\tilde{z}$. To see that $\tilde{z}$ must be the projection point $P_{S}(u)$ first note that since $\tilde{z} \in S$ we have that $\langle u- P_{S}(u),\tilde{z} -P_{S}(u)\rangle \leq 0$. Since $z_n \to \tilde{z}$, the conclusion of Lemma~\ref{psi} is satisfied with $p=P_{S}(u)$. Finally, following the arguments in the proof of Theorem~\ref{theoremyaonoor} with $p=P_{S}(u)$, we see that \eqref{finaleq} holds with the point $p$ always equal to $P_{S}(u)$, implying that $\tilde{z}=P_{S}(u)$. This shows that the analysis in this paper indeed corresponds to a quantitative analysis of the original proof by Yao and Noor.

We finish with some considerations concerning the logical aspects of our analysis.

Let us start by pointing out that, in principle, the monotone functional interpretation could be used to analyse the results presented in this paper. However, our elimination of the sequential weak compactness argument can be seen as an application of the general method obtained in \cite{FFLLPP(19)}. Also, using the $\BFI$ enables us to make use of Proposition~\ref{projection} and Lemma~\ref{lemmaqtXu1} (shown in \cite{PP(ta)} using the $\BFI$) which makes our analysis easier to carry out. 

As already mentioned, the original proof of Theorem~\ref{ThmYaoNoor} requires strong principles. These principles are countable choice as used in the projection argument, sequential weak compactness  and  the existence of the $\limsup$ in Lemma~\ref{SuzukiLemma1}, which require arithmetical comprehension. The analysis of these principles require the use of a stronger form of recursion called \emph{bar recursion} \cite{S(62),O(06)}. As shown below, the arguments in Section~\ref{SectionRestricting} allow us to avoid the use of bar recursion. 
There are already many examples in the proof mining literature  where it is possible to avoid the use of arithmetical comprehension\footnote{We would like to thank Ulrich Kohlenbach for pointing this out to the first author and for providing the appropriate references.}. For example, in \cite{K(05),KLN(18)} the use of the existence of a limit point for a sequence in a compact geodesic space (which requires arithmetical comprehension) is replaced by a combinatorial argument. In \cite{KLAN(ta),KLAN(17)}, a proof of an asymptotic regularity theorem that was based on countable nested uses of sequential compactness (and hence arithmetic comprehension) is analysed resulting in a simple exponential bound, by elimination of sequential compactness. Moreover, in a series of papers, Kohlenbach showed how the monotone functional interpretation can be used to replace the use of arithmetical comprehension by optimal arithmetic substitutes (see e.g. \cite{K(98),K(00)} and \cite[Section~17.9]{K(08)}).

The elimination of countable choice required for the projection argument is carried out in Section~\ref{SectionProjection}. 
The key observation is that \eqref{projarg} can be replaced by \eqref{projarg_weak}. This is in line with earlier analyses (see for example \cite{kohlenbach2011quantitative,FFLLPP(19),PP(ta)}) and it is well-known that this allows for the extracted quantitative information to be expressed in terms of G\"odel's primitive recursive functionals.

The way to deal with sequential weak compactness is explained in full detail in \cite{FFLLPP(19)}, in the context of the $\BFI$. The key point is that sequential weak compactness can be replaced by countable Heine/Borel compactness. The content of Section~\ref{Sectionswc} shows that it can be adapted to our context in a similar way. 

In Section~\ref{SectionBypassACA} we made the modifications necessary to bypass the assumed existence of the real number $d = \limsup \norm{w_n -z_n}$ in Lemma~\ref{SuzukiLemma1}.  
In \cite{KS(ta)}, Kohlenbach and Andrei Sipo\c{s} give a rational approximation to the $\limsup$ of a certain sequence by interpreting the approximation. As described in detail below, we must deal with a similar issue. 

Let $N \in \N$ and let $(x_n)$ be a sequence of real numbers contained in the interval $[0,N]$. The existence of the $\limsup x_n$ can be stated as
\begin{equation*}\label{limsup1}
\exists d\in\R \,\forall k\in\N\, \left( \forall n\in \N \, \exists m\geq n \, \left(x_m\geq d-\frac{1}{k+1}\right) \land \exists n'\in \N \,\forall m'\geq n' \,\left(x_{m'}\leq d+\frac{1}{k+1}\right)\right).
\end{equation*}
The main point is that we can weaken this statement by switching the outermost quantifiers
\begin{equation}\label{limsup2}
\forall k\in \N\,\exists d\in\R \, \left( \forall n\in \N \, \exists m\geq n \, \left(x_m\geq d-\frac{1}{k+1}\right) \land \exists n'\in \N \,\forall m'\geq n' \,\left(x_{m'}\leq d+\frac{1}{k+1}\right)\right).
\end{equation}
In fact, we will show that such $d$ in \eqref{limsup2} is already witnessed by a rational number satisfying
\begin{equation}\label{limsup3}
\forall k\in\N \,\exists p< N(k+1)\, \left( \forall n\in \N \,\exists m\geq n \, \left(x_m\geq \frac{p}{k+1}\right) \land \exists n'\in \N \,\forall m'\geq n' \,\left(x_{m'}\leq \frac{p+1}{k+1}\right)\right),
\end{equation}
which implies that \eqref{limsup2} holds with $d=\frac{p}{k+1}$.

The idea behind \eqref{limsup3} is the following. For each $k\in\N$, by dividing the interval $[0,N]$ into subintervals of length $\frac{1}{k+1}$, there exists $p<N(k+1)$ such that $\frac{p}{k+1}\leq \limsup x_n \leq \frac{p+1}{k+1}$. If we take $d=\frac{2p+1}{2(k+1)}$, i.e.\ the middle point, then it should satisfy \eqref{limsup2} for $2k+1$ (and hence for $k$). This results in statement \eqref{limsup3}.

Lemma~\ref{lemmaratap}, which is shown by $\Pi^0_1$-induction, can be seen to imply \eqref{limsup3} using a collection argument. First note that Lemma~\ref{lemmaratap} implies
\begin{equation}\label{limsup4}
\begin{split}
\forall k,n \in \N\, \mforall f:\N \to \N \, &\exists p<N(k+1)\\ 
&\left(\exists m\geq n \left(x_m \geq \frac{p}{k+1}\right) \wedge \exists n' \in \N \, \forall m' \in [n',n'+f(n')] \left(x_{m'} \leq \frac{p+1}{k+1}\right)\right).
\end{split}
\end{equation}

By a collection argument, we conclude
\begin{equation}\label{limsup5}
\begin{split}
\forall k \in \N \,\exists p<N(k+1)\, &\forall n\in \N\, \mforall f:\N \to \N\\ 
&\left(\exists m\geq n \left(x_m \geq \frac{p}{k+1}\right) \wedge \exists n' \in \N \, \forall m' \in [n',n'+f(n')] \left(x_{m'} \leq \frac{p+1}{k+1}\right)\right),
\end{split}
\end{equation}
which by (monotone) choice axiom is equivalent to \eqref{limsup3}.

Let us elaborate on \eqref{limsup5}. Clearly it is a sufficient condition to \eqref{limsup4}. Let see that \eqref{limsup4} implies \eqref{limsup5}. Assuming that \eqref{limsup5} fails, we obtain that for some $k \in \N$ 
\[\forall r\leq N(k+1)\, \exists n \, \mexists f: \N \to \N \,\forall p< r \left(\forall m\geq n \left(x_m < \frac{p}{k+1}\right) \lor \forall n' \in \N \, \exists m' \in [n',n'+f(n')] \left(x_{m'} > \frac{p+1}{k+1}\right)\right).
\]
By instantiating $r=N(k+1)$, one concludes that \eqref{limsup4} must also fail.

Of course, this collection argument is fully justified by a form of induction. The reader can compare this way of proving \eqref{limsup3}, using $\Pi_1^0$-induction and a collection argument, to the similar \cite[Proposition 4.2]{KS(ta)}, where $\Pi^0_2$-induction was used.

\section*{Acknowledgements}

We would like to thank Lauren\c{t}iu Leu\c{s}tean for suggesting us to analyse Yao and Noor's theorem and for several discussions concerning the subject of this paper. Our paper also benefits from discussions and remarks by Fernando Ferreira and Ulrich Kohlenbach.

Both authors acknowledge the support of FCT - Funda\c{c}\~ao para a Ci\^{e}ncia e Tecnologia under the project: UID/MAT/04561/2019 and the research center Centro de Matem\'{a}tica, Aplica\c{c}\~{o}es Fundamentais e Investiga\c{c}\~{a}o Operacional, Universidade de Lisboa. 
The second author also acknowledges the support of the `Future Talents' short-term scholarship at Technische Universit{\"a}t Darmstadt.

\bibliography{References}{}
\bibliographystyle{plain}

\end{document}